\newtheorem{theorem}{Theorem}[section]
\newtheorem{lemma}[theorem]{Lemma}
\newtheorem{proposition}[theorem]{Proposition}
\newtheorem{corollary}[theorem]{Corollary}
\theoremstyle{definition}
\newtheorem{remark}[theorem]{Remark}
\numberwithin{equation}{section}
\newcommand{\CC}{\mathbb C}
\newcommand{\NN}{\mathbb N}
\newcommand{\ZZ}{\mathbb Z}
\def\A-Lift{\operatorname{A-Lift}}
\newenvironment{psmallmatrix}
  {\left(\begin{smallmatrix}}
  {\end{smallmatrix}\right)}
\begin{document}

\title{On weak Jacobi forms of rank two}

\author{Haowu Wang}

\address{Max-Planck-Institut f\"{u}r Mathematik, Vivatsgasse 7, 53111 Bonn, Germany}

\email{haowu.wangmath@gmail.com}

\author{Brandon Williams}

\address{Lehrstuhl A für Mathematik, RWTH Aachen, 52056 Aachen, Germany}

\email{brandon.williams@matha.rwth-aachen.de}

\subjclass[2010]{11F50}

\date{\today}

\keywords{Jacobi forms, lattices of rank two}

\begin{abstract} 
We study a ring of weak Jacobi forms indexed by integral lattices of rank two. We find an explicit finite set of generators of this ring and give a dimension formula for weak Jacobi forms of rank two lattice index.
\end{abstract}

\maketitle

\section{Introduction}

In this paper we will give structure results for weak Jacobi forms indexed by integral lattices of rank two.  These are two-variable analogues of the weak Jacobi forms introduced by Eichler--Zagier \cite{EZ} that occur in the Fourier-Jacobi expansions of modular forms on $\mathrm{O}(2, 4)$ \cite{G}, on $\mathrm{U}(2, 2)$ \cite{H} and on $\mathrm{Sp}(6)$. Our object of study is the graded ring $$\mathcal{J} := \bigoplus_{a, b, c = 0}^{\infty} \bigoplus_{k \in \mathbb{Z}} J_{k, \begin{psmallmatrix} a+b & b \\ b & c+b \end{psmallmatrix}}^w,$$ where $J_{k, M}^w$ is the space of weak Jacobi forms of weight $k$ and whose index is the lattice with Gram matrix $M$. (We do not require $M$ to be positive-definite or have even diagonal.) The main results are:

\begin{theorem}\label{th:main1} The graded ring $\mathcal{J}$ above is finitely generated. Every weak Jacobi form $\varphi(\tau, z, w)$ in the even subring $$\mathcal{J}_{0} = J^w_{2*, \begin{psmallmatrix} 2* & 2* \\ 2* & 2* \end{psmallmatrix}}$$ can be written as a polynomial in the rank one forms $$f(\tau, z), \; f(\tau, w), \; f(\tau, z+w), \; f \in \{\phi_{-2, 1}, \phi_{0, 1}, \phi_{-1, 2}\},$$ where $\phi_{-2, 1}$, $\phi_{0, 1}$, $\phi_{-1, 2}$ are the generators of the ring of scalar-index weak Jacobi forms of Eichler--Zagier \cite{EZ}. The full ring $\mathcal{J}$ is generated by $9$ rank-one weak Jacobi forms and by $8$ rank-two weak Jacobi forms whose indices are lattices of discriminant $3, 5, 8$.
\end{theorem}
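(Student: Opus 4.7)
My approach uses three specializations of a rank-two weak Jacobi form to reduce the problem inductively to Eichler--Zagier's classical structure theorem \cite{EZ}. For $\varphi(\tau, z, w)$ of index $\begin{psmallmatrix} a+b & b \\ b & c+b \end{psmallmatrix}$, I would study
$$
\rho_1(\varphi) = \varphi(\tau, z, 0), \quad \rho_2(\varphi) = \varphi(\tau, 0, w), \quad \rho_3(\varphi) = \varphi(\tau, z, -z),
$$
which are rank-one weak Jacobi forms of scalar indices $a+b$, $b+c$, and $a+c$, hence polynomials in $\phi_{-2,1}, \phi_{0,1}, \phi_{-1,2}$ with modular-form coefficients. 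The key combinatorial point is that the three basic rank-two Gram matrices $\begin{psmallmatrix} 1 & 0 \\ 0 & 0 \end{psmallmatrix}$, $\begin{psmallmatrix} 0 & 0 \\ 0 & 1 \end{psmallmatrix}$, $\begin{psmallmatrix} 1 & 1 \\ 1 & 1 \end{psmallmatrix}$ attached to the rank-one forms $\phi_{*,*}(\tau, z)$, $\phi_{*,*}(\tau, w)$, $\phi_{*,*}(\tau, z+w)$ generate exactly the cone of admissible indices $\begin{psmallmatrix} a+b & b \\ b & c+b \end{psmallmatrix}$ indexing $\mathcal{J}$.

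Part (2) would then follow by induction on $a+b+c$. Given $\varphi \in \mathcal{J}_0$, I would construct a candidate $\tilde{\varphi}$ inside the subring generated by the nine rank-one forms, matching $\varphi$ under all three restrictions $\rho_1, \rho_2, \rho_3$ simultaneously and sharing its index $\begin{psmallmatrix} a+b & b \\ b & c+b \end{psmallmatrix}$. The difference $\varphi - \tilde{\varphi}$ then vanishes along the three coordinate lines $\{w=0\}, \{z=0\}, \{z+w=0\}$, and is thus divisible by a product of theta-like factors such as $\phi_{-1,2}(\tau, z)\phi_{-1,2}(\tau, w)\phi_{-1,2}(\tau, z+w)$ (combined with parity adjustments to return to $\mathcal{J}_0$), yielding a weak Jacobi form of strictly smaller index to which the induction hypothesis applies.

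The principal obstacle is the \emph{simultaneous interpolation} step: constructing $\tilde{\varphi}$ with exactly the prescribed rank-two index while matching all three restrictions inside the rank-one subring. A naive lift of a single restriction generally requires compensating factors whose index lies outside the admissible cone---for example, the complement $\begin{psmallmatrix} 0 & b \\ b & c+b \end{psmallmatrix}$ of $\begin{psmallmatrix} a+b & 0 \\ 0 & 0 \end{psmallmatrix}$ inside $\begin{psmallmatrix} a+b & b \\ b & c+b \end{psmallmatrix}$ is not a non-negative combination of the three basic matrices when $b \geq 1$. The resolution is to use subtler lifts that blend the three ``directions'', for instance replacing a factor $\phi_{-2,1}(\tau, z)$ by $\phi_{-2,1}(\tau, z+w) \phi_{0,1}(\tau, w) / 12$ (which restricts correctly at $w = 0$ because $\phi_{0,1}(\tau, 0) = 12$), and then subtracting lifts in stages, exploiting the vanishing orders $\phi_{-2,1}(\tau, 0) = \phi_{-1,2}(\tau, 0) = 0$ to normalize the corrections. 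Verifying that each stage stays inside $\mathcal{J}_0$ (rather than drifting into odd weights via odd-weight factors like $\phi_{-1,2}$) adds a further layer of care.

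For part (3), I expect the same inductive scheme to succeed except at a finite list of small-index base cases where the rank-one subring fails to surject onto $J^w_{k, M}$. A dimension count, relying on the dimension formula for rank-two weak Jacobi forms announced in the paper, should identify the gaps as occurring at the rank-two lattices of discriminant $3$, $5$, and $8$ (related to $A_2$ and comparably small rank-two lattices). Eight explicit rank-two weak Jacobi forms---constructible via theta blocks \cite{GSZ} or via pullbacks from $A_2$-type lattices---fill these gaps and yield the generating set of $17$ elements claimed in the theorem.
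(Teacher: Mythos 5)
There is a genuine gap, and also a concrete error in the reduction step. The error: you claim that a form vanishing along $\{w=0\}$, $\{z=0\}$, $\{z+w=0\}$ is divisible by $\phi_{-1,2}(\tau,z)\phi_{-1,2}(\tau,w)\phi_{-1,2}(\tau,z+w)$. But $\phi_{-1,2}(\tau,w)$ has simple zeros at \emph{all four} $2$-torsion points $w=0,1/2,\tau/2,(\tau+1)/2$, so vanishing at $w\in\mathbb{Z}+\tau\mathbb{Z}$ only yields divisibility by $\vartheta(\tau,w)$ (equivalently $\phi_{-1,1/2}(\tau,w)$), not by $\phi_{-1,2}(\tau,w)$; the quotient by your proposed factor would not be holomorphic. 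The deeper gap is the ``simultaneous interpolation'' step, which you correctly identify as the principal obstacle but do not resolve: you would need a lift $\tilde\varphi$ in the rank-one subring, of the exact index $\begin{psmallmatrix} a+b & b \\ b & c+b\end{psmallmatrix}$, matching all three restrictions at once, and your own observation that the complementary indices leave the admissible cone shows why a naive construction fails. The paper avoids this entirely: it matches only \emph{one} restriction at a time (the pullback $\varphi(\tau,z,-z)$ in Theorem \ref{thm:even}, via the surjectivity Lemma \ref{lem:surj}), divides by the single factor $\phi_{-1,1/2}(\tau,z+w)$, and inducts on the off-diagonal entry $b$ rather than on $a+b+c$. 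The out-of-cone factor that then unavoidably appears, $\phi_{-1,1/2}(\tau,z-w)$ of index $\begin{psmallmatrix}1 & -1\\ -1 & 1\end{psmallmatrix}$, is eliminated by the identity $\phi_{-1,1/2}(\tau,z+w)\phi_{-1,1/2}(\tau,z-w)=\frac{1}{12}\bigl(\phi_{-2,1}(\tau,z)\phi_{0,1}(\tau,w)-\phi_{0,1}(\tau,z)\phi_{-2,1}(\tau,w)\bigr)$, which is the missing ingredient that makes the induction close up inside the ring generated by the nine rank-one forms.

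For the full ring, your plan is circular and incomplete. The dimension formula you propose to invoke (Theorem \ref{th:main3}) is itself \emph{derived} in the paper from the structure theory via the Hilbert series computation, so it cannot be used as input to locate the exceptional indices. Moreover, a dimension count can at best tell you where the rank-one subring fails to surject in a single bidegree; it cannot show that adjoining eight specific forms of discriminant $3$, $5$, $8$ suffices to generate everything. The paper needs the explicit parity-by-parity decomposition lemmas (Lemmas \ref{lem:surj2}, \ref{lem:even}, \ref{lem:mixed}, \ref{lem:odd}, \ref{lem:odd2}), together with concretely constructed forms $\Phi_{*,\begin{psmallmatrix}2&1\\1&2\end{psmallmatrix}}$, $\Phi_{0,\begin{psmallmatrix}3&2\\2&3\end{psmallmatrix}}$, $\Phi_{0,\begin{psmallmatrix}3&1\\1&3\end{psmallmatrix}}$ whose pullbacks to $w=0$ and $w=-z$ hit prescribed rank-one targets; these constructions (Serre derivatives of theta blocks) are essential to the argument and absent from your sketch.
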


As an application, we will show that all graded rings of the form $\mathcal{J}_L = \bigoplus_{n = 0}^{\infty} J_{*, L(n)}^w$ are finitely generated (see Corollary \ref{cor:ring}). Here $L$ is an integral lattice of rank two and $L(n)$ is that lattice with its bilinear form multiplied by $n$. 

\begin{theorem}\label{th:main2} Let $L$ be an integral lattice of rank two and let $\vartheta$ be the theta function $$\vartheta(\tau, z) = q^{1/8} \zeta^{1/2} \sum_{n = -\infty}^{\infty} q^{n(n+1)/2} (-\zeta)^n, \; q = e^{2\pi i \tau}, \; \zeta = e^{2\pi i z}.$$ Every weak Jacobi form of index $L$ can be expressed as a rational function in the forms $$\vartheta^{(k)}(\tau, \lambda z), \; k \leq 5,$$ where $\lambda \in L$ and where $\vartheta^{(k)}$ is the $k$-th derivative with respect to $z$.
\end{theorem}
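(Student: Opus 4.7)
The plan is to deduce Theorem~\ref{th:main2} from Theorem~\ref{th:main1}. Let $\varphi$ be a weak Jacobi form of rank-two lattice index $L$; then $\varphi \in \mathcal{J}$ and, by Theorem~\ref{th:main1}, it is a polynomial in the $17$ explicit generators of $\mathcal{J}$. Since a polynomial in rational functions of $\vartheta^{(k)}(\tau, \lambda z)$ is itself such a rational function, it is enough to exhibit the desired representation for each generator.

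For the nine rank-one generators ($\phi_{-2, 1}, \phi_{0, 1}, \phi_{-1, 2}$ evaluated in the elliptic variable $u \in \{z, w, z+w\}$), I would invoke classical theta-block identities. The key observation is Jacobi's derivative formula $\vartheta^{(1)}(\tau, 0) = -2 \pi \eta(\tau)^3$, which realises $\eta^3$ (and therefore all integer powers of it) as a rational function in $\vartheta^{(1)}(\tau, 0)$, i.e.\ a form on our list with $k = 1$ and $\lambda = 0$. Combined with the standard expression $\phi_{-2, 1}(\tau, u) = -\vartheta(\tau, u)^2/\eta(\tau)^6$ and analogous theta-block formulas for $\phi_{-1, 2}$ and $\phi_{0, 1}$, this shows that each rank-one generator is a rational function in $\vartheta(\tau, \lambda \cdot z)$ and $\vartheta^{(1)}(\tau, 0)$ alone, already with $k \leq 1$.

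For the eight rank-two generators of discriminants $3, 5, 8$, I would work case-by-case, writing each as a theta block divided by a power of $\eta$ and, where necessary, introducing derivatives with respect to the elliptic variable to correct the vanishing behaviour at the cusp. The $A_2$-index generators (discriminant $3$) are the easiest, as they arise as classical theta blocks in $z$, $w$, $z+w$; for discriminants $5$ and $8$ one chooses an integral basis of $L$ realising the prescribed Gram matrix and expresses the generator in terms of $\vartheta(\tau, \lambda z)$ for the short vectors $\lambda$ of $L$. The main obstacle is to achieve the sharp bound $k \leq 5$ rather than an arbitrary finite bound: this forces one to recognise every quasi-modular scalar factor appearing in the coefficients (essentially $E_2$, $E_4$, $E_6$) as arising from $\vartheta^{(k)}(\tau, 0)$ with $k \in \{1, 3, 5\}$, using the fact that the odd Taylor coefficients of $\vartheta(\tau, u)$ at $u = 0$ through order five precisely span the low-weight scalar quasi-modular forms needed. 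Once this is verified explicitly for each of the eight generators, the theorem follows.
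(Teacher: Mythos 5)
Your overall strategy---reduce to the finite generating set and then express each generator as a rational function of theta derivatives---is exactly the paper's route: its proof of Theorem \ref{th:main2} is the observation that the statement follows from Theorem \ref{thm:gens} together with the explicit constructions of \S\ref{sec:construction}. The gaps are in the supporting claims. First, the rank-one generators are not all expressible with $k \le 1$: the form $\phi_{0,1} = \frac{3}{\pi^2}\wp(\tau,z)\,\vartheta(\tau,z)^2/\eta(\tau)^6$ is not a theta block, and writing $\wp$ in terms of $\vartheta$ requires $\wp(\tau,z) = -\partial_z^2 \log \vartheta(\tau,z) - \tfrac{1}{3}\vartheta'''(\tau,0)/\vartheta'(\tau,0)$, i.e.\ derivatives of orders $2$ and $3$. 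This is harmless for the final bound but your ``already with $k\le 1$'' is false. Second, and more seriously, $E_6$ (one of the generators in Theorem \ref{thm:gens}, and on your list of scalar factors) is \emph{not} a rational function of the $\vartheta^{(k)}(\tau,0)$ with $k\le 5$: the Taylor coefficients of $\vartheta$ at $z=0$ through order $5$ only generate $E_2$ and $E_4$, and $E_6$ first appears in $\vartheta^{(7)}(\tau,0)$. One must use a nonzero argument instead, e.g.\ $g_3 = 4\wp^3 - g_2\wp - (\wp')^2$ evaluated at some $\lambda z$ with $\lambda \ne 0$, which stays within $k \le 3$. As stated, your mechanism for producing the needed scalar modular forms does not cover $E_6$.

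The more substantial omission concerns the eight rank-two generators. Your plan (``theta blocks divided by powers of $\eta$, with derivatives in the elliptic variable correcting the vanishing behaviour at the cusp'') does not describe how these forms are actually built, and it leaves the entire case-by-case verification---which is where the content of the theorem lies---unspecified. In the paper the weight-zero generators are produced by the Serre derivative $\mathcal{S}$, which involves $\partial_\tau$ and $G_2$; the device that makes Theorem \ref{th:main2} work is the heat equation $\mathcal{S}\vartheta = 0$, which converts $\mathcal{S}$ applied to any polynomial in the forms $\vartheta^{(k)}(\tau,\lambda z)$, $k \le n$, into a polynomial in the forms $\vartheta^{(k)}(\tau,\lambda z)$, $k \le n+2$. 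The sharp bound $k \le 5$ is then explained by the two nested applications of $\mathcal{S}$ in the discriminant-$8$ generator $\Phi_{0, \begin{psmallmatrix} 3 & 1 \\ 1 & 3 \end{psmallmatrix}}$, which involves $\mathcal{S}$ applied to an expression containing $\Phi_{0, \begin{psmallmatrix} 2 & 1 \\ 1 & 2 \end{psmallmatrix}} = -12\,\mathcal{S}\bigl(\Phi_{-2, \begin{psmallmatrix} 2 & 1 \\ 1 & 2 \end{psmallmatrix}}\bigr)$, starting from the order-one expression $\vartheta'/\vartheta$. Without this observation (or an equivalent one), the ``explicit verification for each of the eight generators'' cannot be carried out along the lines you describe.
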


\begin{theorem}\label{th:main3} Let $L$ be an integral lattice of rank two and choose a Gram matrix of the form $\begin{psmallmatrix} a+b & b \\ b & c+b \end{psmallmatrix}$ with $a, b, c \in \mathbb{N}_0$. Let $P_a$, $Q_a$ denote the Laurent polynomials $$P_a(t) = t^{-a} + \frac {1 - t^{1-a}}{1 - t^{-1}} = t^{-a} + t^{2-a} + ... + 1, \; a \ge 2$$ and $$Q_a(t) = \frac{1 - t^{-a}}{1 - t^{-1}} = t^{1-a} + t^{2-a} + ... + 1, \; a \ge 1,$$ and define $P_0(t) = 1, P_1(t) = t^{-1}$ and $Q_{-1}(t) = Q_0(t) = 0$. Then the $\mathbb{C}[E_4, E_6]$-module $J_{*, L}^w$ of weak Jacobi forms of index $L$ is free on generators $\varphi_1,...,\varphi_n$ of weights $k_1,...,k_n$ that are determined by 
\begin{align*} & t^{k_1} + ... + t^{k_n} \\ =& P_a(t)P_b(t)P_c(t) + Q_a(t) Q_b(t) Q_c(t) + (2t^{-1} - 1) Q_{a-1}(t) Q_{b-1}(t) Q_{c-1}(t) \\ &- t^{-1} \Big(Q_{a}(t)Q_{b-1}(t)Q_{c-1}(t) + Q_{a-1}(t)Q_b(t)Q_{c-1}(t) + Q_{a-1}(t)Q_{b-1}(t)Q_c(t)\Big) \\  &+ \begin{cases} t^{1-a-b-c}: & abc \ne 0; \\ 0: & abc = 0. \end{cases}
\end{align*}
\end{theorem}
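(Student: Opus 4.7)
The plan is to exhibit a free basis of $J_{*,L}^w$ over $\mathbb{C}[E_4,E_6]$ using the generators from Theorem~\ref{th:main1}, and to verify that the generating function of the weights of this basis equals the polynomial in the statement.

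I would first establish that $J_{*,L}^w$ is free over $\mathbb{C}[E_4,E_6]$. Via the theta decomposition $\varphi = \sum_{\mu \in L'/L} h_\mu\,\Theta_{L,\mu}$, a weak Jacobi form of index $L$ corresponds to a vector-valued modular form $\{h_\mu\}$ of weight $k-1$ for the Weil representation $\rho_L$, subject to pole-order bounds at $\infty$ determined by the weakness condition. Standard structure results for such spaces (Bannai--Koike, Marks--Mason) guarantee freeness over $\mathbb{C}[E_4,E_6]$, which transfers to $J_{*,L}^w$.

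Next I would enumerate an explicit basis using Theorem~\ref{th:main1}. Since every form in the even subring $\mathcal{J}_0$ is a polynomial in the rank-one generators $\phi_{-2,1}, \phi_{0,1}, \phi_{-1,2}$ evaluated at $z$, $w$, $z+w$, and the full ring adds only eight rank-two generators, the index constraint $\begin{psmallmatrix} a+b & b \\ b & c+b \end{psmallmatrix}$ forces a fixed index contribution in each of the three ``directions'': $a$ in $z$, $c$ in $w$, $b$ in $z+w$. For monomials of fixed index $j$ in a single direction, the generating function of weights modulo $\mathbb{C}[E_4,E_6]$-relations is $P_j(t)$ (even-weight part, using $\phi_{-2,1}, \phi_{0,1}$) plus a $Q_j$-contribution from the odd-weight generator $\phi_{-1,2}$ (of index $2$, weight $-1$). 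Multiplying across the three directions yields the product terms $P_a P_b P_c$ and $Q_a Q_b Q_c$ in the polynomial.

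The final step is to identify the inclusion-exclusion corrections. The main terms $P_a(t) P_b(t) P_c(t)$ and $Q_a(t) Q_b(t) Q_c(t)$ correctly enumerate monomials with at most one $\phi_{-1,2}$-insertion per direction, while the signed corrections $-t^{-1}(Q_a Q_{b-1}Q_{c-1} + Q_{a-1}Q_b Q_{c-1} + Q_{a-1}Q_{b-1}Q_c)$ and $(2t^{-1}-1) Q_{a-1}Q_{b-1}Q_{c-1}$ account for the algebraic relations satisfied by $\phi_{-1,2}(\tau,z)$, $\phi_{-1,2}(\tau,w)$, $\phi_{-1,2}(\tau,z+w)$ in the ring $\mathcal{J}$ (each such insertion occupying two units of index in its direction, so overlaps must be subtracted). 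The final term $t^{1-a-b-c}$ (present only when $abc\neq 0$) captures the contribution of the eight extra rank-two generators of Theorem~\ref{th:main1}, whose indices of discriminant $3,5,8$ force them to appear only when the support of $(a,b,c)$ is full. The main obstacle will be verifying these correction terms: I would first check the formula in small cases (notably $a=b=c=1$, the $A_2$ lattice, where the formula gives weights $\{0,-2,-3\}$ matching Wirthm\"uller's classical result), then proceed by induction on $(a,b,c)$, using the rank-one specialization $b=c=0$ as a base case, where the formula collapses to $P_a(t)$ and reproduces the Eichler--Zagier structure.
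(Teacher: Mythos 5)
Your overall strategy --- exhibit an explicit monomial basis built from the generators of Theorem \ref{th:main1} and match its weight generating function against the stated polynomial --- is genuinely different from the paper's route, which never writes down a basis at all: the paper computes the full four-variable Hilbert series $\mathrm{Hilb}\,\mathcal{J}(q,r,s,t)$ (Theorem \ref{thm:hilbert}) via the exact sequence $0 \to K \to \mathcal{J} \to R \to 0$ induced by $\varphi \mapsto \varphi(\tau,z,0)$, where the surjectivity lemmas of \S\ref{sec:general}--\ref{sec:weight} pin down $R$ and the identification $K \approx \phi_{-1,1/2}(\tau,w)\cdot\mathcal{J}$ gives a functional equation that can be solved in closed form; Theorem \ref{th:main3} is then the coefficient of $q^a r^b s^c$. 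Your approach, if it worked, would have the added benefit of producing explicit generators of each module $J^w_{*,L}$, which the paper's argument does not directly provide.

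However, there is a genuine gap at the heart of your argument: the ring $\mathcal{J}$ is very far from being a free polynomial algebra on the generators of Theorem \ref{th:main1}, and the step where the correction terms $-t^{-1}(Q_aQ_{b-1}Q_{c-1}+\cdots)$ and $(2t^{-1}-1)Q_{a-1}Q_{b-1}Q_{c-1}$ are said to ``account for the algebraic relations'' is precisely the content that needs proof and is nowhere supplied. To count monomials modulo relations you would need either an explicit presentation of the relation ideal (there are many relations beyond the ones you gesture at, e.g.\ $\phi_{-1,1/2}(\tau,z+w)\phi_{-1,1/2}(\tau,z-w)=\tfrac1{12}(\phi_{-2,1}(\tau,z)\phi_{0,1}(\tau,w)-\phi_{0,1}(\tau,z)\phi_{-2,1}(\tau,w))$, plus all the relations involving the eight rank-two generators), or a direct proof that a specific list of monomials spans $J^w_{*,L}$ over $\mathbb{C}[E_4,E_6]$. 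Your proposed verification --- check small cases and then ``proceed by induction on $(a,b,c)$'' --- does not close this, because no inductive mechanism is specified: you need some operation that reduces a form of index $\begin{psmallmatrix} a+b & b \\ b & c+b\end{psmallmatrix}$ to forms of strictly smaller index, and that is exactly what the paper's evaluation-and-divide-by-$\vartheta$ lemmas provide. Your reading of the individual terms is also not reliable as stated: $Q_aQ_bQ_c$ is not the odd-weight contribution of $\phi_{-1,2}$-insertions (the odd-weight rank-one generating function is $t^{-1}+t^{-3}+\cdots$, not the consecutive range $t^{1-a}+\cdots+1$), and the term $t^{1-a-b-c}$ is the single weight-$(k_{\min}+1)$ generator $\bigl(\tfrac{\vartheta'(\tau,z)}{\vartheta(\tau,z)}+\tfrac{\vartheta'(\tau,w)}{\vartheta(\tau,w)}-\tfrac{\vartheta'(\tau,z+w)}{\vartheta(\tau,z+w)}\bigr)\vartheta(\tau,z)^a\vartheta(\tau,z+w)^b\vartheta(\tau,w)^c\eta^{-3(a+b+c)}$, not a count of the eight rank-two generators. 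One salvageable point: since Proposition \ref{prop:free} already gives freeness of rank $\det L$, it would suffice to exhibit $\det L$ monomials of the stated weights that span; but spanning is the hard part and remains unproved.
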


This project was motivated partly by physical applications. Weak Jacobi forms of rank two have applications to $6D$ superconformal field theories (cf. \cite{CHS, DL, DGHKK,GHKK, HLV}) where they occur as elliptic genera and partition functions of strings. In \cite{HLV} Babak, Lockhart and Vafa conjectured a fusion rule from $n$ pairs of E-strings to $n$ heterotic strings, which yields a new approach to explicit expressions for the elliptic genus of $n$ E-strings, and is equivalent to certain non-trivial identities among weak Jacobi forms associated to lattices of type $E_8(n)\oplus M_n$ where $M_n$ is an integral lattice of rank $2$. When $n=1,2$, the lattice $M_n$ is diagonal so Jacobi forms of index $M_n$ reduce to the scalar-index Jacobi forms of Eichler--Zagier \cite{EZ}, and in these cases the conjecture was proved in \cite{HLV}. In general, the lattice $M_n$ is not diagonal (for instance, $M_3$ is $\mathbb{Z}^2$ with Gram matrix $\begin{psmallmatrix} 4 & 1 \\ 1 & 4 \end{psmallmatrix}$), and there is no general structure theorem for weak Jacobi forms of rank two in the literature. The case $n = 3$ was proved in \cite{CHS} by working in a diagonal sublattice. This argument also appears in \cite{GHKK}. Passing to a diagonal sublattice creates many redundant generators which causes the computation to be quite difficult; we hope the structure results proved here will simplify such arguments.

The layout of this paper is as follows. In \S \ref{sec:pre} we review some properties of Jacobi forms and show that the ring of weak Jacobi forms associated to the direct sum $L\oplus M$ of two integral lattices is generated by tensor products of the generators of weak Jacobi forms of indices $L$ and $M$ (see Theorem \ref{thm:diag}). In \S \ref{sec:matrix} we define Jacobi forms of matrix index.  \S \ref{sec:construction} is devoted to the construction of generators. In \S \ref{sec:even} we determine the ring of weak Jacobi forms of even weight associated to even lattices, proving the first part of Theorem \ref{th:main1} (cf. Theorem \ref{thm:even}). The second part of Theorem \ref{th:main1} (i.e. Theorem \ref{thm:gens}) is proved in \S \ref{sec:general}. Theorem \ref{th:main2} follows from Theorem \ref{thm:gens} and our construction of the generators. In \S \ref{sec:weight} we study the Hilbert series of weak Jacobi forms and prove Theorem \ref{th:main3} as a corollary of Theorem \ref{thm:hilbert}.

\section{Preliminaries}\label{sec:pre}

Let $L$ be an integral lattice with positive semidefinite bilinear form $\langle -, - \rangle : L \otimes L \rightarrow \mathbb{Z}$ and quadratic form $$Q : L \rightarrow \frac{1}{2}\mathbb{Z}, \; \; Q(x) = \langle x, x \rangle / 2.$$ For every $N \in \mathbb{N}$ we let $L(N)$ denote the lattice $L$ with quadratic form $N \cdot Q$. 

A \emph{weakly holomorphic Jacobi form} of weight $k \in \mathbb{Z}$ and index $L$ is a holomorphic function $$f : \mathbb{H} \times (L \otimes \mathbb{C}) \longrightarrow \mathbb{C}$$ that satisfies $$f\Big( \frac{a \tau + b}{c \tau + d}, \frac{\mathfrak{z}}{c \tau + d} \Big) = (c \tau + d)^k \exp \Big( \frac{2\pi i c}{c \tau + d} Q(\mathfrak{z}) \Big) f(\tau, \mathfrak{z})$$ and $$f(\tau, \mathfrak{z} + \lambda \tau + \mu) = (-1)^{Q(\lambda)+ Q(\mu)} \exp \Big( -2\pi i \tau Q(\lambda) - 2\pi i \langle \lambda, \mathfrak{z} \rangle \Big) f(\tau, \mathfrak{z})$$ for all $\begin{psmallmatrix} a & b \\ c & d \end{psmallmatrix} \in \mathrm{SL}_2(\mathbb{Z})$ and $\lambda, \mu \in L$, and whose Fourier expansion as a function of $\tau$, $$f(\tau, \mathfrak{z}) = \sum_{n \gg -\infty} c(n, \mathfrak{z}) q^n, \; \; q = e^{2\pi i \tau}$$ involves only finitely many negative powers of $q$. We call $f$ a \emph{weak Jacobi form} if its Fourier series is supported only on nonnegative exponents: $$f(\tau, \mathfrak{z}) = \sum_{n=0}^{\infty} c(n, \mathfrak{z}) q^n,$$ and a \emph{holomorphic Jacobi form} if every function $f(\tau,\lambda \tau + \mu)$ with $\lambda, \mu \in \frac{1}{N}L$, $N \in \mathbb{N}$ is a holomorphic modular form (of level $\Gamma(N)$).

The spaces of weakly holomorphic Jacobi forms, weak Jacobi forms and holomorphic Jacobi forms of weight $k$ and index $L$ are labelled $$J_{k, L}^!, \; J_{k, L}^w, \; J_{k, L}$$ respectively.

This definition extends the standard notion of Jacobi forms of lattice index by allowing odd lattices (i.e. vectors may have half-integral norm) and by allowing degenerate lattices. Neither of these enlarges the class of Jacobi forms significantly: if $f(\tau, \mathfrak{z})$ has index $L$, then $f(\tau, 2 \mathfrak{z})$ has even index $L(4)$; and if $L$ is a degenerate lattice with kernel $$\mathrm{ker}(L) = \{y \in L: \; \langle x, y \rangle = 0 \; \text{for all} \; x \in L\}$$ then we have an identification $$\pi^* : J^w_{*, L / \mathrm{ker}(L)} \stackrel{\sim}{\longrightarrow} J^w_{*, L}, \; \; f \mapsto \pi^*f(\tau, \mathfrak{z}) := f(\tau, \pi(\mathfrak{z})),$$ where $\pi : L \otimes \mathbb{C} \rightarrow (L / \mathrm{ker}(L)) \otimes \mathbb{C}$ is the quotient map. 

When $\mathrm{rank}(L) \le 1$, (weak) Jacobi forms of matrix index $M = (m)$ are the same as (weak) Jacobi forms of index $m/2$ as defined by Eichler--Zagier \cite{EZ}, and will be referred to as such. In particular, by \emph{Jacobi forms of index $0$} we mean functions $f(\tau, z)$ that are constant in $z$ and modular forms of level $\mathrm{SL}_2(\mathbb{Z})$ in $\tau$. The fundamental example of a weak Jacobi form of half-integral index is the form (see \cite{G99}) $$\phi_{-1, 1/2}(\tau, z) = \frac{\vartheta(\tau, z)}{\eta^3(\tau)}$$ of weight $-1$ and index $1/2$, where $\vartheta$ is the \emph{Jacobi theta function} as defined in Theorem \ref{th:main2} and $\eta(\tau) = q^{1/24} \prod_{n=1}^{\infty} (1 - q^n)$ is the Dedekind eta function, such that $\eta^3(\tau) = \vartheta'(\tau, 0)$ by Jacobi's identity. More generally:

\begin{proposition}\label{prop:rankone} The graded ring of weak Jacobi forms of rank one and half-integer index is generated by the Eisenstein series $E_4, E_6$ and by the weak Jacobi forms $$\phi_{-1, 1/2}(\tau, z) = \frac{\vartheta(\tau, z)}{\eta^3(\tau)} = (-\zeta^{-1/2} + \zeta^{1/2}) + (\zeta^{-3/2} - 3 \zeta^{-1/2} + 3 \zeta^{1/2} - \zeta^{3/2})q + O(q^2),$$ $$\phi_{0, 1}(\tau, z) = \frac{\vartheta^2(\tau, z)}{\eta^6(\tau)} \cdot \frac{3}{\pi^2} \wp(\tau, z) = (\zeta^{-1} + 10 + \zeta) + (10 \zeta^{-2} - 64 \zeta^{-1} + 108 - 64\zeta + 10\zeta^2)q + O(q^2),$$ $$\phi_{0, 3/2}(\tau, z) = \frac{\vartheta(\tau, 2z)}{\vartheta(\tau, z)} = (\zeta^{-1/2} + \zeta^{1/2}) + (-\zeta^{-5/2} + \zeta^{-1/2} + \zeta^{1/2} - \zeta^{5/2})q + O(q^2).$$ There is a decomposition $$J^w_{*, */2} = \mathbb{C}[\phi_{-1, 1/2}, \phi_{0, 1}] \oplus \phi_{0, 3/2} \mathbb{C}[\phi_{-1, 1/2}, \phi_{0, 1}].$$
\end{proposition}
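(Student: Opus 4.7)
The plan is to reduce everything to the Eichler--Zagier structure theorem for even-weight integer-index weak Jacobi forms, $J^{w,\mathrm{even}}_{*,\ZZ} = M_*[\phi_{-2,1}, \phi_{0,1}]$, after noting the identity $\phi_{-1,1/2}^2 = \phi_{-2,1}$ which is immediate from the definitions in terms of $\vartheta$ and $\eta$. Directness of the claimed sum is then a parity count: a monomial in $M_*[\phi_{-1,1/2}, \phi_{0,1}]$ has weight and Eichler--Zagier index of matching parity (even weight if and only if integer index), while a monomial in $\phi_{0,3/2}\,M_*[\phi_{-1,1/2}, \phi_{0,1}]$ has opposite parity, so the two summands intersect trivially.

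To establish generation I fix $f \in J^w_{k, m/2}$ and distinguish the four parity combinations of $(k, m)$. The even-weight integer-index case is Eichler--Zagier. In each of the three remaining cases I divide $f$ by a suitable generator to land back in that case, using the involution $f(\tau, -z) = (-1)^k f(\tau, z)$, which holds automatically since $-I \in \SL_2(\ZZ)$, to guarantee holomorphy of the quotient. If $k$ is odd and the index is half-integer, then $f$ is odd in $z$ so $f(\tau, 0) = 0$, and anti-periodicity extends the vanishing to all of $\ZZ + \tau\ZZ$, matching the divisor of $\phi_{-1,1/2} = \vartheta/\eta^3$, so $f/\phi_{-1,1/2}$ is of even weight and integer index. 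If $k$ is odd and the index is integer, then for each two-torsion point $z_0$ of $\CC/(\ZZ+\tau\ZZ)$ one has $-z_0 \equiv z_0$ modulo the lattice with trivial elliptic cocycle, forcing $f(\tau, z_0) = -f(\tau, z_0) = 0$ and matching the four-point divisor of $\vartheta(\tau, 2z)/\eta^3 = \phi_{-1,1/2}\phi_{0,3/2}$. If $k$ is even and the index is half-integer, the same two-torsion argument, now carried through with the non-trivial character of half-integer-index forms under lattice translations, forces $f$ to vanish at the three nontrivial two-torsion points, matching the divisor of $\phi_{0,3/2}$ alone.

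Each reduced quotient has even weight and integer index and so belongs to $M_*[\phi_{-1,1/2}^2, \phi_{0,1}]$ by Eichler--Zagier, and a divisor-degree count on $\CC/(\ZZ + \tau\ZZ)$ handles the low-index base cases where the quotient would have negative index by forcing $f = 0$. The main delicate point is the third case: at the two-torsion point $(1+\tau)/2$ the elliptic cocycle picks up an extra sign $e^{2\pi i m} = -1$ coming from $m \in \tfrac{1}{2} + \ZZ$, and this is exactly the sign that cooperates with the evenness $f(\tau, -z) = f(\tau, z)$ to produce the required zero. Once this book-keeping is done, the three divisions deposit $f$ into the appropriate summand of the claimed decomposition, and the proof is complete.
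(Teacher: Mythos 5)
Your proposal is correct and follows exactly the route the paper intends: the paper gives no proof of Proposition \ref{prop:rankone} beyond the remark that it ``can be easily derived from the structure theorem of \cite{EZ},'' and your argument is a careful filling-in of that derivation (reduction to the even-weight integer-index case via division by $\phi_{-1,1/2}$, $\phi_{-1,1/2}\phi_{0,3/2}$, or $\phi_{0,3/2}$ after matching forced two-torsion zeros, with the half-integer-index character correctly accounted for). The parity bookkeeping for directness and the divisor-degree count for the low-index base cases are both sound, so nothing further is needed.
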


Here $$\wp(\tau, z) = \frac{1}{z^2} + \sum_{\omega \in (\mathbb{Z} + \tau \mathbb{Z}) \backslash \{0\}} \Big( \frac{1}{(z + \omega)^2} - \frac{1}{\omega^2} \Big)$$ is the Weierstrass elliptic function. The subring of integer-index weak Jacobi forms is then generated by the forms $$\phi_{-2, 1} := \phi_{-1, 1/2}^2, \; \phi_{0, 1}, \; \text{and} \; \phi_{-1, 2} := \phi_{-1, 1/2}\cdot \phi_{0, 3/2},$$ cf. \cite{EZ}. Conversely, Proposition \ref{prop:rankone} can be easily derived from the structure theorem of \cite{EZ}.\\

It will be important at many points that the theta function $\vartheta(\tau, z)$ (and therefore also $\phi_{-1, 1/2}$) for fixed $\tau$ has simple zeros in the lattice points $z \in \mathbb{Z} + \tau \mathbb{Z}$ and nowhere else. This follows immediately from the Jacobi triple product $$\vartheta(\tau, z) = q^{1/8} \zeta^{1/2} \prod_{n=1}^{\infty} (1 - q^n)(1 - q^n \zeta) (1 - q^{n-1} \zeta^{-1})$$ or it can be proved more directly.

For any fixed index $L$, the Jacobi forms $J_{*, L}$ and weak Jacobi forms $J_{*, L}^w$ of index $L$ and all weights can be viewed as modules over the ring $\mathbb{C}[E_4, E_6]$ of modular forms. Abstractly the structure of these modules is well-known, but we include a proof for convenience:

\begin{proposition}\label{prop:free} $J_{*, L}$ and $J_{*, L}^w$ are free $\mathbb{C}[E_4, E_6]$-modules. If $L$ is positive-definite then both modules have rank $\mathrm{det}(L)$.
\end{proposition}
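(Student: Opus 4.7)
The plan is to prove freeness via the theta decomposition, which identifies Jacobi forms of index $L$ with a module of vector-valued modular forms on $\mathrm{SL}_2(\mathbb{Z})$, and then to invoke the classical freeness theorem for such modules over $\mathbb{C}[E_4, E_6]$.

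First I would reduce to the case of a positive-definite, non-degenerate, even lattice $L$. Degenerate lattices are handled by the $\mathbb{C}[E_4, E_6]$-linear isomorphism $\pi^* : J^w_{*, L/\ker(L)} \xrightarrow{\sim} J^w_{*,L}$ noted in the preliminaries, so freeness transfers from the non-degenerate quotient (the rank claim is vacuous in the degenerate case). Odd positive-definite lattices can either be reduced to even ones via the rescaling $f(\tau,\mathfrak{z}) \mapsto f(\tau, 2\mathfrak{z})$ or handled directly by shifting the theta decomposition by a characteristic element, so I assume $L$ even below.

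The central step is the theta decomposition. Writing $L'$ for the dual lattice, for each $\gamma \in L'/L$ set
\[
\Theta_{L,\gamma}(\tau, \mathfrak{z}) \;=\; \sum_{\lambda \in L + \gamma} q^{Q(\lambda)} e^{2\pi i \langle \lambda, \mathfrak{z}\rangle}.
\]
Isolating the Fourier coefficients of $f$ in $\mathfrak{z}$ along cosets of $L$ inside $L'$ shows that every (weak) Jacobi form of weight $k$ and index $L$ admits a unique expansion
\[
f(\tau,\mathfrak{z}) \;=\; \sum_{\gamma \in L'/L} h_\gamma(\tau)\, \Theta_{L,\gamma}(\tau,\mathfrak{z}),
\]
and the modular and elliptic transformation laws of $f$ force $(h_\gamma)_\gamma$ to be a vector-valued modular form of weight $k - \mathrm{rank}(L)/2$ for the conjugate Weil representation $\rho_L^*$ of $\mathrm{SL}_2(\mathbb{Z})$ on $\mathbb{C}[L'/L]$. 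The components $h_\gamma$ are holomorphic at $\infty$ when $f \in J_{*,L}$ and have $q$-expansions bounded below by $-\max_\gamma Q(\gamma)$ when $f \in J^w_{*,L}$. In either case, the assignment $f \mapsto (h_\gamma)_\gamma$ is an isomorphism of graded $\mathbb{C}[E_4, E_6]$-modules onto the corresponding space of vector-valued forms.

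Finally I invoke the classical theorem that for any finite-dimensional representation $\rho$ of $\mathrm{SL}_2(\mathbb{Z})$, the graded module of vector-valued modular forms (holomorphic at $\infty$, or with a fixed lower bound on the $q$-expansion in the weak setting) is free over $\mathbb{C}[E_4, E_6]$ of rank $\dim \rho$. Since $\dim \rho_L^* = |L'/L| = \det(L)$, this produces both freeness and the claimed rank in the positive-definite case. The main technical obstacle lies in justifying the free-module theorem simultaneously in the holomorphic and weak regimes and for the half-integer weight shifts that arise when $\mathrm{rank}(L)$ is odd; both generalizations are standard and can be deduced from the vanishing of higher cohomology of the relevant coherent sheaves on the moduli stack of elliptic curves, or more concretely from a Hilbert series argument after localizing at the modular discriminant $\Delta$.
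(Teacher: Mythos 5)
Your approach is sound and, for the freeness part, genuinely different from the paper's. The paper establishes freeness of $J_{*,L}$ and $J_{*,L}^w$ directly by adapting the argument of Theorem 8.4 of Eichler--Zagier, which needs only that the graded pieces are finite-dimensional and vanish in sufficiently low weight; this handles holomorphic and weak forms, even and odd lattices, all at once and without any theta decomposition. The Weil representation enters the paper only for the rank computation, and there via Borcherds' Riemann--Roch formula for the increments $\dim J_{k+12,L} - \dim J_{k,L}$, split into even- and odd-weight submodules, rather than via the free-module theorem. Your route (theta decomposition plus the Marks--Mason/Candelori--Franc free-module theorem) delivers freeness and the rank $\lvert L'/L\rvert = \det(L)$ in one stroke for even positive-definite $L$, at the cost of importing a heavier external theorem; the paper's route is more elementary and more uniform across the cases the proposition actually covers.

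Two points need repair before your argument covers the full statement. First, the rescaling reduction for odd lattices does not work as written: $f(\tau,\mathfrak{z}) \mapsto f(\tau, 2\mathfrak{z})$ embeds $J_{*,L}$ as a proper graded submodule of $J_{*,L(4)}$, and a graded submodule of a free $\mathbb{C}[E_4,E_6]$-module need not be free (the ideal $(E_4,E_6)$ is not); moreover $\det(L(4)) = 4^{\mathrm{rank}(L)}\det(L)$, so the rank does not transfer. Your alternative (a theta decomposition shifted by a characteristic element, i.e.\ exploiting the $(-1)^{Q(\lambda)+Q(\mu)}$ multiplier to land in a $\det(L)$-dimensional representation) is the correct mechanism and is essentially what the paper does by identifying the $q^0$-terms of $\{f(\tau,2\mathfrak{z})\}$ with a $\det(L)$-dimensional subspace of those for $L(4)$, but it has to be carried out rather than only named. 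Second, freeness in the weak regime does not follow from localizing at $\Delta$: the weak module sits between $M(\rho)$ and $M(\rho)[\Delta^{-1}]$, and one still needs the finite-dimensionality-plus-vanishing argument (i.e.\ exactly the Eichler--Zagier Theorem 8.4 argument the paper cites) applied to the weak module itself. So the "standard generalization" you defer to is, in the end, the paper's own freeness proof.
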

More generally, for any congruence subgroup $\Gamma \le \mathrm{SL}_2(\mathbb{Z})$, the $M_*(\Gamma)$-modules $J_{*, L}$ and $J_{*, L}^w$ are free on the same $\mathrm{det}(L)$ generators.
\begin{proof} To see that $J_{*, L}$ and $J_{*, L}^w$ are free, one can adapt the proof of Theorem 8.4 of \cite{EZ} (which, as remarked there, applies to a wide class of modules over $M_*(\Gamma)$; the only essential ingredients are that $J_{k, L}$ and $J_{k, L}^w$ are always finite-dimensional, and are zero for sufficiently low $k$). Since $J_{*, M} \subseteq J_{*, M}^w$ and $\Delta^r J_{*, M}^w \subseteq J_{*, M}$ for all large enough $r \in \mathbb{N}$, or more precisely whenever $$r \ge \max_{\gamma \in L'/L} \min_{x \in \gamma + L} Q(x),$$ the modules $J_{*, M}$ and $J_{*, M}^w$ have the same rank. 

\smallskip

When $L$ is an even index we can identify $J_{*, L}$ with the module of vector-valued modular forms for the Weil representation attached to $L$. Then the Riemann-Roch theorem (through the formula of section 4 of \cite{B}) implies that, for weights $k > 2 + \mathrm{rank}(L) / 2$, $$\mathrm{dim} \, J_{k + 12, L} = \mathrm{dim}\, J_{k, L} + \begin{cases} \mathrm{dim}\, \mathrm{span}(e_{\gamma} + e_{-\gamma}: \; \gamma \in L'/L): & k \equiv 0 \, (2); \\ \mathrm{dim}\, \mathrm{span}(e_{\gamma} - e_{-\gamma}: \; \gamma \in L'/L): & k \equiv 1 \, (2); \end{cases}$$ where $e_{\gamma} \in \mathbb{C}[L'/L]$ is the group ring vector attached to $\gamma \in L'/L$. This implies that the even-weight and odd-weight submodules are free of rank $$\mathrm{rank}\, J_{2*, L} = \mathrm{rank}\, J_{2*, L}^w = \mathrm{dim}\, \mathrm{span}(e_{\gamma} + e_{-\gamma})$$ and $$\mathrm{rank}\, J_{2*+1, L} = \mathrm{rank} \, J_{2*+1, L}^w = \mathrm{dim}\, \mathrm{span}(e_{\gamma} - e_{-\gamma}),$$ and we immediately obtain the full rank $$\mathrm{rank}\, J_{*, L} = \mathrm{rank}\, J_{2*, L} + \mathrm{rank}\, J_{2*+1, L} = \mathrm{det}(L).$$

This rank also has the following interpretation. The $q^0$-term $c(0, \mathfrak{z})$ of a weak Jacobi form of weight $k$ and even index $L$ is a Laurent polynomial $$c(0, \mathfrak{z}) = \sum_{r \in L'} c(0, r) \zeta^r, \; \; \zeta^r = e^{2\pi i \langle r, \mathfrak{z} \rangle}$$ where $c(0, r)$ may be nonzero only if $r$ has minimal norm among all vectors in its coset $r + L$, and where $c(0, r) = (-1)^k c(0, -r)$. That $J_{*, L}^w$ has rank $\mathrm{det}(L)$ is equivalent to the fact that every Laurent polynomial satisfying these two conditions actually occurs as the $q^0$-term of a weak Jacobi form (of some weight); in other words, if $V$ is the space spanned by the Fourier expansions of weak Jacobi forms (of all weights), then the map $$V / \Delta V \stackrel{\sim}{\longrightarrow} \mathrm{span}\Big(\zeta^r: \; r \in L' \; \text{of minimal norm in its coset}\Big)$$ sending $\sum_{n, r} c(n, r) q^n \zeta^r$ to its $q^0$-term is an isomorphism of vector spaces. 

\smallskip

Now if $L$ is an odd lattice, we obtain $$\mathrm{rank}\, J_{*, L} = \mathrm{rank}\, J_{*, L}^w = \mathrm{det}(L)$$ by identifying the Fourier expansions of forms $\{f(\tau, 2 \mathfrak{z}): \; f \in J_{*, L}^w\}$ with the subspace of $V / (\Delta V)$ of $q^0$-terms of Jacobi forms of index $L(4)$ satisfying $$c(0, r) = 0 \; \text{unless} \; r/2 \in L(4)';$$ this is seen to be a $\mathrm{det}(L)$-dimensional space, and the claim follows.
\end{proof}

\begin{corollary} For every integral lattice $L$, there is a polynomial $P_L$ and a Laurent polynomial $P_L^{w}$ such that $$\sum_{k=0}^{\infty} \mathrm{dim}\, J_{k, L} t^k = \frac{P_L(t)}{(1 - t^4)(1 - t^6)}, \; \; \sum_{k=-\infty}^{\infty} \mathrm{dim}\, J_{k, L}^w t^k = \frac{P_L^{w}(t)}{(1 - t^4) (1 - t^6)}.$$ They have the form $$P_L(t) = t^{k_1} + ... + t^{k_n}, \; \; P_L^{w}(t) = t^{\ell_1} + ... + t^{\ell_n},$$ where $k_1,...,k_n$ are the weights of generators of the $\mathbb{C}[E_4, E_6]$-module $J_{*, L}$ and where $\ell_1, ..., \ell_n$ are the weights of generators of the $\mathbb{C}[E_4, E_6]$-module $J_{*, L}^w$.
\end{corollary}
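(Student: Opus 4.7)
The corollary is essentially a bookkeeping consequence of Proposition \ref{prop:free}, which tells us that $J_{*, L}$ and $J_{*, L}^w$ are free modules over the graded ring $\mathbb{C}[E_4, E_6]$. The plan is to simply compute the Hilbert series using this freeness.

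First, fix free homogeneous generators $\varphi_1, \ldots, \varphi_n$ for the module $J_{*, L}$ of weights $k_1, \ldots, k_n$. By freeness, every Jacobi form of weight $k$ has a unique representation as $\sum_{i=1}^n a_i \varphi_i$ with $a_i \in \mathbb{C}[E_4, E_6]$ homogeneous of weight $k - k_i$. Hence, as a graded $\mathbb{C}$-vector space,
\begin{equation*}
J_{*, L} \;\cong\; \bigoplus_{i=1}^n \mathbb{C}[E_4, E_6]\cdot\varphi_i,
\end{equation*}
where the $i$-th summand is $\mathbb{C}[E_4, E_6]$ shifted in weight by $k_i$. Since the Hilbert series of $\mathbb{C}[E_4, E_6]$ is $1/((1-t^4)(1-t^6))$, summing over the generators gives
\begin{equation*}
\sum_{k=0}^\infty \dim J_{k, L}\, t^k \;=\; \frac{t^{k_1} + \cdots + t^{k_n}}{(1-t^4)(1-t^6)},
\end{equation*}
which establishes the first identity with $P_L(t) = t^{k_1} + \cdots + t^{k_n}$.

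The argument for $J_{*, L}^w$ is identical: freeness over $\mathbb{C}[E_4, E_6]$ provides homogeneous generators of weights $\ell_1, \ldots, \ell_n$, and the same decomposition yields the second identity. The only difference is that weak Jacobi forms may have arbitrarily negative weight (e.g.\ $\phi_{-2, 1}$), so some $\ell_i$ may be negative; this is exactly why $P_L^w$ is recorded as a Laurent polynomial rather than an ordinary polynomial. Note also that the number $n$ of generators is the same in both series, since Proposition \ref{prop:free} shows that $J_{*, L}$ and $J_{*, L}^w$ have the same rank over $\mathbb{C}[E_4, E_6]$.

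There is no serious obstacle here — the content has already been proved in Proposition \ref{prop:free}; the only thing to observe is that a free module over a graded polynomial ring with known Hilbert series has its own Hilbert series determined by the weights of the generators. The shape of $P_L$ and $P_L^w$ as sums of monomials with coefficient $+1$ (rather than some more general Laurent polynomial with arbitrary coefficients) is precisely the statement that $J_{*, L}$ and $J_{*, L}^w$ are free rather than merely finitely generated, and so reading off the exponents recovers the weights of any choice of homogeneous free generators.
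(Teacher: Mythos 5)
Your argument is correct and is exactly the intended one: the paper gives no separate proof of this corollary because it follows immediately from the freeness established in Proposition \ref{prop:free}, via the standard fact that a free graded module over $\mathbb{C}[E_4,E_6]$ with homogeneous generators of weights $k_1,\dots,k_n$ has Hilbert series $(t^{k_1}+\cdots+t^{k_n})/((1-t^4)(1-t^6))$. Your remarks on why $P_L^w$ is only a Laurent polynomial and why the two polynomials have the same number of terms are consistent with the paper.
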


The following description of weakly holomorphic and weak Jacobi forms associated to a direct sum of lattices has no analogue for holomorphic Jacobi forms.

\begin{theorem}\label{thm:diag} For any integral lattices $L, M$ there are isomorphisms of $\mathbb{C}[E_4, E_6]$-modules $$J_{*, L}^! \otimes J_{*, M}^! \stackrel{\sim}{\longrightarrow} J_{*, L \oplus M}^!,$$
$$J_{*, L}^w \otimes J_{*, M}^w \stackrel{\sim}{\longrightarrow} J_{*, L \oplus M}^w,$$
given by the \emph{direct product} $$(f, g) \mapsto (f \otimes g)(\tau, \mathfrak{z}_L, \mathfrak{z}_M):= f(\tau, \mathfrak{z}_L) g(\tau, \mathfrak{z}_M),$$
where $\mathfrak{z}_L \in L \otimes \mathbb{C}$, $\mathfrak{z}_M \in M \otimes \mathbb{C}$, and $(\mathfrak{z}_L, \mathfrak{z}_M) \in (L \oplus M) \otimes \mathbb{C}$.
In terms of the Laurent polynomials $P^w$ this implies $$P_{L \oplus M}^w = P_L^w \cdot P_M^w.$$
\end{theorem}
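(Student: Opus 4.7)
The plan is to verify that the map $\Phi: J^w_{*, L} \otimes_{\mathbb{C}[E_4, E_6]} J^w_{*, M} \to J^w_{*, L \oplus M}$ sending $f \otimes g$ to $f(\tau, \mathfrak{z}_L)g(\tau, \mathfrak{z}_M)$ is well-defined and an isomorphism. Well-definedness is routine: the block-diagonal structure of the Gram matrix of $L \oplus M$ yields $Q_{L \oplus M}(\mathfrak{z}_L, \mathfrak{z}_M) = Q_L(\mathfrak{z}_L) + Q_M(\mathfrak{z}_M)$, so the modular cocycle $\exp(2\pi i c\,Q(\mathfrak{z})/(c\tau+d))$ multiplies across, while the elliptic transformation law and sign character $(-1)^{Q(\lambda)+Q(\mu)}$ decouple over the direct sum. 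The $q$-expansion of $f(\tau, \mathfrak{z}_L)g(\tau, \mathfrak{z}_M)$ is the product of those of $f$ and $g$, so the weak (resp.\ weakly holomorphic) condition is preserved. Bilinearity over $\mathbb{C}[E_4, E_6]$ is immediate since $E_4, E_6$ depend only on $\tau$, so $\Phi$ descends to the tensor product.

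The main step is surjectivity in the weak case, which I would carry out by descending induction on weight using the $q^0$-term analysis from the proof of Proposition \ref{prop:free}. That proof identifies $V/\Delta V$ with the space of Laurent polynomials $\sum c(0, r)\zeta^r$ supported on representatives $r \in N'$ of minimal norm in their coset, for each index $N$. Because $Q_{L \oplus M} = Q_L + Q_M$, a vector $(r_L, r_M) \in L' \oplus M'$ is of minimal norm in its coset iff each component is of minimal norm in its own coset, so the space of allowed $q^0$-terms of $J^w_{*, L \oplus M}$ (ranging over all weights) is the $\mathbb{C}$-tensor product of the corresponding spaces for $L$ and $M$. Hence for any $\varphi \in J^w_{k_0, L \oplus M}$, there exist $f_i \in J^w_{k_i, L}$ and $g_i \in J^w_{k_0 - k_i, M}$ whose direct-product sum matches the $q^0$-term of $\varphi$; the difference $\varphi - \Phi(\sum_i f_i \otimes g_i)$ is then divisible by $\Delta$, and since $J^w_{k, L \oplus M}$ vanishes for $k$ sufficiently negative (also Proposition \ref{prop:free}), induction yields $\varphi \in \mathrm{im}(\Phi)$.

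For injectivity I would fix a $\mathbb{C}[E_4, E_6]$-basis $g_1, \ldots, g_n$ of $J^w_{*, M}$ so that every element of the domain is uniquely $\sum_i f_i \otimes g_i$ with $f_i \in J^w_{*, L}$. Assuming $\sum_i f_i(\tau, \mathfrak{z}_L)g_i(\tau, \mathfrak{z}_M) \equiv 0$, I apply the theta decomposition $g_i(\tau, \mathfrak{z}_M) = \sum_\gamma h_{i, \gamma}(\tau)\vartheta^M_\gamma(\tau, \mathfrak{z}_M)$ and use the linear independence of the lattice theta series $\vartheta^M_\gamma$ as functions of $\mathfrak{z}_M$ to reduce to $\sum_i f_i(\tau, \mathfrak{z}_L) h_{i, \gamma}(\tau) = 0$ for every $\gamma$. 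Because the $g_i$ form a basis, the vector-valued modular forms $(h_{i, \gamma})_\gamma$ are $\mathbb{C}[E_4, E_6]$-linearly independent, so the determinant of the matrix $(h_{i, \gamma})$ is a nonzero meromorphic function of $\tau$; inverting forces each $f_i$ to vanish. The weakly holomorphic case follows from the weak case via $\Delta$-localization. Finally, $P^w_{L \oplus M} = P^w_L \cdot P^w_M$ records the multiplicativity of Hilbert series under tensor product of free $\mathbb{C}[E_4, E_6]$-modules.

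I expect the main obstacle to be the surjectivity step, specifically verifying that any allowed $q^0$-term for $L \oplus M$ factors as a finite sum of products of allowed $q^0$-terms for $L$ and $M$ matching the weight and parity constraints $c(0, r) = (-1)^k c(0, -r)$. This is precisely the feature that fails for holomorphic Jacobi forms, where the holomorphy of $f(\tau, \lambda\tau + \mu)$ for $(\lambda, \mu) \in \frac{1}{N}(L \oplus M)^2$ imposes coupled constraints on the $L$ and $M$ variables that do not admit a product decomposition.
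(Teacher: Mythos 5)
Your setup is sound: well-definedness, the reduction of surjectivity to matching $q^0$-terms followed by division by $\Delta$ and induction on weight, the theta-decomposition argument for injectivity, and the passage from $J^w$ to $J^!$ by inverting $\Delta$ are all fine. The gap is exactly where you suspect it, and it is not a routine verification but the mathematical heart of the theorem. From Proposition \ref{prop:free} you only know that the $q^0$-term of $\varphi \in J^w_{k_0, L\oplus M}$ lies in the tensor product of the spaces of $q^0$-terms realizable for $L$ and for $M$ \emph{in some weight}; you then need it realized by products $f_i \otimes g_i$ with weights summing to $k_0$. If $\alpha_i$ are module generators of $J^w_{*,L}$ of weights $l_i$ and $\beta_j$ of $J^w_{*,M}$ of weights $m_j$, the $q^0$-terms reachable by products in weight $k_0$ span only those $q^0(\alpha_i)\otimes q^0(\beta_j)$ with $k_0 - l_i - m_j \in \{0,4,6,8,\dots\}$. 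That the $q^0$-term of every weight-$k_0$ form of index $L\oplus M$ lies in this smaller span is equivalent to saying $J^w_{*,L\oplus M}$ has no generators below the weights predicted by $P^w_L P^w_M$ --- essentially the identity $P^w_{L\oplus M}=P^w_L P^w_M$ you are trying to prove. Note that the two conditions ``supported on minimal vectors'' and ``$c(0,r)=(-1)^k c(0,-r)$'' do not characterize $q^0$-terms in a \emph{fixed} weight (for scalar index $1$ one has $J^w_{-4,1}=0$ even though admissible symmetric Laurent polynomials exist), so nothing in the $q^0$-analysis excludes a hypothetical form of index $L\oplus M$ living in a weight too low to be reached by products.

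The paper closes precisely this gap by a different device: it reduces to $M$ of rank one and uses the development coefficients $h_0,h_2,\dots,h_{2m}$ of $e^{mz^2G_2(\tau)}h$, for which surjectivity of $h\mapsto(h_0,\dots,h_{2m})$ is witnessed by the explicit combination $\sum_n h_{2n}\phi_{0,1}^{m-n}\phi_{-2,1}^{n}$ and injectivity follows from counting the $2m$ zeros of the elliptic function $h(\tau,\mathfrak{z}_L,\cdot)$ in a fundamental parallelogram; general $M$ is then treated via a diagonal sublattice and Heisenberg averaging, with a separate lemma guaranteeing that weakness is preserved under the averaging. Some such weight-by-weight upper bound on $\dim J^w_{k,L\oplus M}$ is what your argument is missing. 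Your injectivity route via the theta decomposition, by contrast, is a legitimate alternative to the paper's zero-counting, provided you first carry out the reduction to even nondegenerate lattices so that the theta decomposition applies.
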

\begin{proof} We can assume without loss of generality that $L$ and $M$ are positive-definite, because $J_{*, L}^!$, $J_{*, L}^w$ can otherwise be identified with $J_{*, L/\mathrm{ker}(L)}^!$, $J_{*, L/\mathrm{ker}(L)}^w$ and because $$(L \oplus M) / \mathrm{ker}(L \oplus M)  = (L \oplus M) / (\mathrm{ker}(L) \oplus \mathrm{ker}(M)) \cong (L / \mathrm{ker}(L)) \oplus (M / \mathrm{ker}(M)).$$ Similarly, we can assume without loss of generality that $L$ and $M$ are even lattices; otherwise, substitute $\mathfrak{z} \mapsto 2 \mathfrak{z}$. 

\bigskip

Suppose first that $M = (2m)$ has rank $1$. To any weakly holomorphic Jacobi form $h \in J_{k, L \oplus M}^!$ we associate as in \cite{EZ} a sequence of weakly holomorphic Jacobi forms $h_0, h_1,...$ of index $L$ and weights $k,k+1,...$ as the Taylor coefficients of the function (see \cite[Proposition 1.5]{G99}) $$\tilde h(\tau, \mathfrak{z}_L, z) := e^{mz^2 G_2(\tau)} h(\tau, \mathfrak{z}_L, z) = \sum_{n=0}^{\infty} h_n(\tau, \mathfrak{z}_L) z^n$$ about $0$. Here $$G_2(\tau) = \frac{\pi^2}{3} \Big( 1 - 24 \sum_{n=1}^{\infty} \sigma_1(n) q^n \Big)$$ is the non-normalized Eisenstein series of weight two. 

We split the space $J_{k, L \oplus M}^!$ into its even and odd subspaces (with respect to $M$): $$J_{k, L \oplus M}^{!, \text{even}} = \{h \in J_{k, L \oplus M}^!: \; h(\tau, \mathfrak{z}_L, -z) = h(\tau, \mathfrak{z}_L, z)\},$$ $$J_{k, L \oplus M}^{!, \text{odd}} = \{h \in J_{k, L \oplus M}^!: \; h(\tau, \mathfrak{z}_L, -z) = -h(\tau, \mathfrak{z}_L, z)\}.$$

Then the map $$J_{k, L \oplus M}^{!, \text{even}} \stackrel{\sim}{\longrightarrow} \bigoplus_{n=0}^m J_{k + 2n, L}^!, \; \; h \mapsto (h_0, h_2, ..., h_{2m})$$ is injective, because for fixed $\tau$ and $\mathfrak{z}_L$ the function $h(\tau, \mathfrak{z}_L, z)$ has exactly $2m$ zeros in any fundamental parallelogram for $\mathbb{C} / (\mathbb{Z} + \mathbb{Z}\tau)$, and it is surjective as any sequence $(h_0,...,h_{2m})$ arises (essentially) as the sequence of coefficients of the form $$h(\tau, \mathfrak{z}_L, z) = h_0(\tau, \mathfrak{z}_L) \phi_{0, 1}^m(\tau, z) + h_2(\tau, \mathfrak{z}_L) \phi_{0, 1}^{m-1}(\tau, z) \phi_{-2, 1}(\tau, z) + ... + h_{2m}(\tau, \mathfrak{z}_L) \phi_{-2, 1}^m(\tau, z).$$ Under this map the subspace of weak Jacobi forms $J_{k, L \oplus M}^{w, \text{even}}$ is identified with $\bigoplus_{n=0}^m J_{k+2n, L}^w$. 

It follows that $$J_{k, L \oplus M}^{!, \text{odd}} \stackrel{\sim}{\longrightarrow} \bigoplus_{n=1}^{m-1} J_{k + 2n - 1, L}^! \; \text{and} \; J_{k, L \oplus M}^{w, \text{odd}} \stackrel{\sim}{\longrightarrow} \bigoplus_{n=1}^{m-1} J_{k + 2n - 1, L}^w,$$ $$h \mapsto (h_1,h_3,...,h_{2m-3})$$ are isomorphisms: we can apply the result for even forms to the quotient $h(\tau, \mathfrak{z}_L, z) / \phi_{-1, 2}(\tau, z)$. This remains a weakly-holomorphic Jacobi form, and it is a weak Jacobi form if $h$ was, because any $h \in J_{k, L \oplus M}^{w, \text{odd}}$ has forced zeros at the $2$-torsion points $z = 0, 1/2, \tau/2, (\tau+1)/2$ where $\phi_{-1, 2}(\tau, z)$ has its simple zeros. In particular, every weak Jacobi form of index $L \oplus M$ can be written as a linear combination of products of weak Jacobi forms $h_i(\tau, \mathfrak{z}_L)$ of index $L$ by weak Jacobi forms $f(\tau, z)$ of rank one. 

\bigskip

In general let $\tilde M$ be a diagonal sublattice of $M$. Repeatedly applying the rank one argument above yields the isomorphism $$\otimes : J_{*, L}^! \otimes J_{*, \tilde M}^! \stackrel{\sim}{\longrightarrow} J_{*, L \oplus \tilde M}^!.$$ Let $R$ denote the averaging map $$R : J_{*, \tilde M}^! \longrightarrow J_{*, M}^!, \; \; h \mapsto  \sum_{\zeta \in \mathcal{H}(M) / \mathcal{H}(\tilde M)} h \Big| \zeta,$$ which extends naturally to the map $J_{*, L \oplus \tilde M}^! \rightarrow J_{*, L \oplus M}^!$ by acting only on the variable $\mathfrak{z}_M$, where $\mathcal{H}(M)$ and $\mathcal{H}(\tilde M)$ are respectively the integral Heisenberg groups of $M$ and $\tilde{M}$ (see \cite{G}). Then we obtain $$J_{*, L \oplus M}^! = R \Big( J_{*, L \oplus \tilde M}^! \Big) = R \Big( J_{*, L}^! \otimes J_{*, \tilde M}^! \Big) = J_{*, L}^! \otimes J_{*, M}^!.$$ This averaging argument requires some care for weak Jacobi forms, since the image of a weak Jacobi form under $R$ is not necessarily a weak Jacobi form. That $J_{*, L \oplus M}^w$ is indeed identified with $J_{*, L}^w \otimes J_{*, M}^w$ under this map follows from the following lemma.
\end{proof}

\begin{lemma} Suppose $f_1,...,f_r \in J_{*, L}^w$ are a $\mathbb{C}[E_4, E_6]$-basis of weak Jacobi forms of index $L$, and suppose $g_1,...,g_r \in J_{*, M}^!$ are weakly holomorphic Jacobi forms for which $$h := f_1 \otimes g_1 + ... + f_r \otimes g_r$$ is a weak Jacobi form. Then each of $g_1,...,g_r$ is a weak Jacobi form.
\end{lemma}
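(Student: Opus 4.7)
My plan is to argue by contradiction. Suppose some $g_j$ has nonzero Fourier coefficient at a negative $q$-power, and expand
\[
f_i(\tau,\mathfrak{z}_L) = \sum_{a \geq 0} f_i^{(a)}(\mathfrak{z}_L) q^a, \qquad g_i(\tau,\mathfrak{z}_M) = \sum_{b} g_i^{(b)}(\mathfrak{z}_M) q^b.
\]
Let $-N$ be the most negative $q$-exponent occurring in any $g_i$, so $-N < 0$. The $q^{-N}$ coefficient of $h = \sum_i f_i \otimes g_i$ then collapses: any pair with $a + b = -N$ and $a \geq 1$ forces $b < -N$ and hence $g_i^{(b)} = 0$ by minimality, so the coefficient equals $\sum_i f_i^{(0)}(\mathfrak{z}_L)\, g_i^{(-N)}(\mathfrak{z}_M)$. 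Since $h$ is a weak Jacobi form, this expression must vanish identically in $(\mathfrak{z}_L,\mathfrak{z}_M)$.

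The key step is then to show that the $q^0$-terms $f_1^{(0)},\ldots,f_r^{(0)}$ of a $\mathbb{C}[E_4,E_6]$-basis of $J_{*,L}^w$ are $\mathbb{C}$-linearly independent as Laurent polynomials in $\mathfrak{z}_L$. Granted this, for each fixed $\mathfrak{z}_M$ the vanishing above is a trivial $\mathbb{C}$-linear combination of the $f_i^{(0)}$, forcing $g_i^{(-N)}(\mathfrak{z}_M) = 0$ for every $i$; letting $\mathfrak{z}_M$ vary then yields $g_i^{(-N)} \equiv 0$ for all $i$, contradicting the choice of $-N$.

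To establish the linear independence, suppose $\sum_i c_i f_i^{(0)} = 0$ with constants $c_i \in \mathbb{C}$ and set $F := \sum_i c_i f_i \in J_{*,L}^w$. The $q^0$-term of $F$ vanishes; since $\Delta = q + O(q^2)$ has no zeros on $\mathbb{H}$, the quotient $G := F/\Delta$ is holomorphic on $\mathbb{H} \times (L \otimes \mathbb{C})$, its $q$-expansion begins at $q^0$, and it inherits the Jacobi transformation law from $F$, so $G \in J_{*,L}^w$. Expanding $G = \sum_j p_j(E_4,E_6) f_j$ against the free basis and comparing coefficients in $\sum_i c_i f_i = \Delta G = \sum_j \Delta p_j f_j$ yields $c_i = \Delta \cdot p_i$ in $\mathbb{C}[E_4,E_6]$. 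Since $E_4, E_6$ are algebraically independent and $\Delta$ is a nonconstant polynomial in them, no nonzero constant lies in $\Delta \cdot \mathbb{C}[E_4,E_6]$, forcing $c_i = 0$. The only mildly delicate point is this construction of $G$, which rests on the classical nonvanishing of $\Delta$ on $\mathbb{H}$; everything else is bookkeeping of Fourier coefficients.
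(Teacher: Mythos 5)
Your main line of argument is exactly the paper's: let $d<0$ be the minimal $q$-exponent appearing in any $g_i$, observe that the $q^d$-coefficient of $h$ is $\sum_i f_i^{(0)}(\mathfrak{z}_L)\,g_i^{(d)}(\mathfrak{z}_M)$ and must vanish, and conclude from the $\mathbb{C}$-linear independence of the $q^0$-terms $f_i^{(0)}$ that every $g_i^{(d)}$ is identically zero. The paper simply asserts that independence (referring implicitly to the discussion in its Proposition on freeness), whereas you attempt to prove it; it is in that sub-proof that there is a genuine gap. A $\mathbb{C}[E_4,E_6]$-basis of $J^w_{*,L}$ typically consists of generators of \emph{different} weights (already for scalar index $1$ the basis is $\phi_{-2,1},\phi_{0,1}$ of weights $-2$ and $0$). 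Then $F=\sum_i c_i f_i$ is an inhomogeneous element, and the hypothesis $\sum_i c_i f_i^{(0)}=0$ only says that the $q^0$-terms of its homogeneous components $F_k$ sum to zero, not that each vanishes. Consequently each $G_k=F_k/\Delta$ is merely weakly holomorphic, with a $q^{-1}$-term equal to $F_k^{(0)}$; these cancel in the total sum $G$, but $G$ need not lie in the graded module $J^w_{*,L}$, so the expansion $G=\sum_j p_j(E_4,E_6)f_j$ with polynomial coefficients is not available. (If one instead expands over weakly holomorphic modular forms, the conclusion becomes $p_i=c_i\Delta^{-1}$, which is no contradiction.)

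The statement you want is true, and your $\Delta$-division idea does prove it after a homogenization step. First split the relation $\sum_i c_i f_i^{(0)}=0$ according to the parity of the weights, using $c(0,r)=(-1)^k c(0,-r)$: the symmetric and antisymmetric parts must vanish separately, so one may assume all $k_i$ with $c_i\neq 0$ have the same parity. Then choose $K$ with $K\equiv k_i \pmod 2$ and $K\geq \max_i k_i+4$, so that each $K-k_i$ is $0$ or an even integer $\geq 4$ and hence is the weight of some monomial $E_4^{a_i}E_6^{b_i}$. Since $E_4$ and $E_6$ have constant term $1$, the homogeneous form $\tilde F=\sum_i c_i E_4^{a_i}E_6^{b_i}f_i$ of weight $K$ still has vanishing $q^0$-term, so $\tilde F/\Delta\in J^w_{K-12,L}$ and freeness gives $c_iE_4^{a_i}E_6^{b_i}\in \Delta\,\mathbb{C}[E_4,E_6]$, forcing $c_i=0$ since $\Delta$ does not divide a monomial in $E_4,E_6$. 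With this repair your argument is complete and in fact supplies a justification that the paper omits.
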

\begin{proof} Let $d$ be the minimal exponent for which some $g_i$ has a nonzero coefficient of $q^d$, and suppose $d < 0$. Write $$f_i = \sum_{n=0}^{\infty} a(f_i, n) q^n, \; \; g_i = \sum_{n=d}^{\infty} b(g_i, n) q^n,$$ where $a(f_i, n)$ and $b(g_i, n)$ are Laurent polynomials in independent variables $\mathfrak{z}_L$ and $\mathfrak{z}_M$, such that $$h(\tau, \mathfrak{z}) = \underbrace{\Big( \sum_{i=1}^r a(f_i, 0)(\mathfrak{z}_L) b(g_i, d)(\mathfrak{z}_M) \Big)}_{=0} q^d + O(q^{d+1}).$$ Since $f_1,...,f_r$ are a basis of $J_{*, L}^w$, their $q^0$-terms are $\mathbb{C}$-linearly independent.  In particular, $b(g_i, d)$ are Laurent polynomials which are zero at every value of $\mathfrak{z}_M$ and therefore identically zero, contradicting the definition of $d$.
\end{proof}

\section{Weak Jacobi forms of matrix index} \label{sec:matrix}

It will be more convenient to fix the underlying group $L = \mathbb{Z}^2$ and consider Jacobi forms indexed by a varying Gram matrix on $L$. Throughout the rest of this paper, an \emph{index matrix} is a matrix of the form $$M = \begin{psmallmatrix} a + b & b \\ b & c + b \end{psmallmatrix} \in \mathbb{Z}^{2 \times 2}, \; \text{with} \; a, b, c \in \mathbb{N}_0.$$ We do not require $M$ to have full rank. Weak Jacobi forms of index $M$ are then written as functions $$f(\tau,  z, w), \; \; \tau \in \mathbb{H}, \; z, w \in \mathbb{C}.$$ We will study the space of all weak Jacobi forms of all weights and all index matrices: $$\mathcal{J} := \bigoplus_{k \in \mathbb{Z}} \bigoplus_{a, b, c=0}^{\infty} J_{k, \begin{psmallmatrix} a + b & b \\ b & c + b \end{psmallmatrix}}^w.$$

It is not hard to see that every rank two lattice $(L, Q)$ is isometric to $\mathbb{Z}^2$ with quadratic form $z^T M z / 2$ for some index matrix $M$. The anharmonic group $$G = \langle \begin{psmallmatrix} 1 & 1 \\ -1 & 0 \end{psmallmatrix}, \begin{psmallmatrix} 0 & 1 \\ 1 & 0 \end{psmallmatrix} \rangle \subseteq \mathrm{PGL}_2(\mathbb{Z})$$ acts on the set of index matrices by conjugation. In particular, the ring $\mathcal{J}$ is preserved under the following changes of coordinates in the elliptic variables: $$(z, w) \mapsto (z+w, -w), \; (w, -z-w), \; (w, z).$$ More precisely, if $f(\tau, z, w)$ has index $\begin{psmallmatrix} a+b & b \\ b & c+b \end{psmallmatrix}$, then $f(\tau, z+w, -w)$ has index $\begin{psmallmatrix} b+a & a \\ a & c+a \end{psmallmatrix}$; and $f(\tau, w, -z-w)$ has index $\begin{psmallmatrix} b+c & c \\ c & a+c \end{psmallmatrix}$; and $f(\tau, w, z)$ has index $\begin{psmallmatrix} c+b & b \\ b & a+b \end{psmallmatrix}$. In other words, if $M$ is written in the form $\begin{psmallmatrix} a+b & b \\ b & c+b \end{psmallmatrix}$ then $G$ acts as the permutations of the indices $(a, b, c)$. If the indices are such that $b \le a \le c$ and we do not allow multiples of the identity then the quadratic form $(a+b)x^2 + 2bxy + (c+b)y^2$ is the \emph{reduced form} in its $\mathrm{SL}_2(\mathbb{Z})$-equivalence class; however it is not convenient to make these restrictions. 

\bigskip

If $f(\tau, z)$ is any (weak) Jacobi form of index $m \in \frac{1}{2}\mathbb{N}$, then we obtain (weak) Jacobi forms of positive semidefinite matrix index by substituting values in the elliptic variable in $f$. In particular, the form $F_1(\tau, z, w) := f(\tau, z)$ has index $\begin{psmallmatrix} 2m & 0 \\ 0 & 0 \end{psmallmatrix}$; $F_2(\tau, z, w) := f(\tau, w)$ has index $\begin{psmallmatrix} 0 & 0 \\ 0 & 2m \end{psmallmatrix}$; and $F_3(\tau, z, w) := f(\tau, z+w)$ has index $\begin{psmallmatrix} 2m & 2m \\ 2m & 2m \end{psmallmatrix}$. Taking products of these forms yields Jacobi forms of (true) rank two. When $f = \phi_{-1, 1/2}$ the forms constructed this way are called \emph{theta blocks} (see \cite{GSZ}). Note that the forms $F_1, F_2, F_3$ are permuted under the anharmonic group.

\section{Construction of generators}\label{sec:construction}

There are various differential operators that can be used to produce Jacobi forms. We mention two here: 

\smallskip

(i)  Suppose $f(\tau, z)$ is a (weak) Jacobi form of weight $k$ and index $m \in \frac{1}{2}\mathbb{N}$. Then $$\frac{(2\pi i)^{-1} f'(\tau, z)}{f(\tau, z)} + 2 m \frac{\mathrm{Im}(z)}{\mathrm{Im}(\tau)}$$ transforms like a Jacobi form of weight $1$ and index $0$ (i.e. an elliptic function). Here, $f'$ denotes differentiation with respect to the $z$ variable. In particular, an expression of the form $$\sum_{i=1}^n \frac{\vartheta'(\tau, \langle \lambda_i, \mathfrak{z} \rangle)}{\vartheta(\tau, \langle \lambda_i, \mathfrak{z} \rangle)}, \; \; \lambda_i \in L$$ is an abelian function (or meromorphic Jacobi form of index zero) if and only if $\sum_i \lambda_i = 0.$ 

\smallskip

(ii) Suppose $f(\tau, \mathfrak{z})$ is a (weak) Jacobi form of weight $k$ and positive-definite index $M$. Then the \emph{Serre derivative} $$\mathcal{S}f(\tau, \mathfrak{z}) := \mathcal{H} f(\tau, \mathfrak{z}) - \frac{3}{\pi^2}\Big( \frac{k}{12} - \frac{\mathrm{rank}(L)}{24} \Big) G_2(\tau)f(\tau, \mathfrak{z})$$ is a (weak) Jacobi form of weight $k+2$ and the same index $M$. Here $\mathcal{H}$ is the heat operator $(2\pi i)^{-1} \partial_{\tau} - (2\pi i)^{-2} \Delta_L / 2$, i.e. on Fourier series it acts as $$\mathcal{H} \Big( \sum_{r \in \mathbb{Z}^n} \sum_{n = 0}^{\infty} c(n, r) q^n \zeta^r \Big) = \sum_{r \in \mathbb{Z}^n} \sum_{n = 0}^{\infty} c(n, r) (n - r^T M^{-1} r / 2) q^n \zeta^r.$$ When $M$ has even diagonal, $\mathcal{S}$ is obtained by applying the Serre derivative componentwise on the corresponding vector-valued modular form (cf. \cite{EZ}). Note that $\mathcal{S} \vartheta = 0$. Using this relation one can express the Serre derivative of a polynomial in the forms $\vartheta(\tau, \lambda z), ..., \vartheta^{(n)}(\tau, \lambda z)$, $\lambda \in L$ as a polynomial in the forms $\vartheta(\tau, \lambda z),..., \vartheta^{(n+2)}(\tau, \lambda z)$.

\bigskip

We can now construct a number of examples of Jacobi forms of rank two which are not simply products of Jacobi forms of rank one. The forms we construct here will be useful in our induction arguments later due to their special values at $w = 0$ and $w = -z$. For now we leave these constructions somewhat unmotivated. Let $q = e^{2\pi i\tau}$, $\zeta = e^{2\pi i z}$ and $\omega = e^{2\pi i w}$.

\begin{lemma} (a corollary of the $A_2$-case of Wirthm\"uller's theorem \cite{W92}) The graded $\mathbb{C}[E_4, E_6]$-module of weak Jacobi forms of matrix index $\begin{psmallmatrix} 2 & 1 \\ 1 & 2 \end{psmallmatrix}$ is free on the basis 
\begin{align*}
\Phi_{-3, \begin{psmallmatrix} 2 & 1 \\ 1 & 2 \end{psmallmatrix}}(\tau, z, w) &= \frac{\vartheta(\tau, z) \vartheta(\tau, w) \vartheta(\tau, z+w)}{\eta^9(\tau)} \\ &= -\zeta^{-1}\omega^{-1} + \zeta^{-1} + \omega^{-1} - \zeta - \omega + \zeta \omega + O(q), \\ 
\Phi_{-2, \begin{psmallmatrix} 2 & 1 \\ 1 & 2 \end{psmallmatrix}}(\tau, z, w) &= \Big( \frac{\vartheta'(\tau, z)}{\vartheta(\tau, z)} + \frac{\vartheta'(\tau, w)}{\vartheta(\tau, w)} - \frac{\vartheta'(\tau, z+w)}{\vartheta(\tau, z+w)} \Big) \Phi_{-3, \begin{psmallmatrix} 2 & 1 \\ 1 & 2 \end{psmallmatrix}} \\ &= \zeta^{-1}\omega^{-1} + \zeta^{-1} + \omega^{-1} - 6 + \zeta + \omega + \zeta \omega + O(q),\\  
\Phi_{0, \begin{psmallmatrix} 2 & 1 \\ 1 & 2 \end{psmallmatrix}}(\tau, z, w) &= -12 \cdot \mathcal{S} \Big(\Phi_{-2, \begin{psmallmatrix} 2 & 1 \\ 1 & 2 \end{psmallmatrix}}\Big) \\ &= \zeta^{-1} \omega^{-1} + \zeta^{-1} + \omega^{-1} + 18 + \zeta + \omega + \zeta \omega + O(q). 
\end{align*} 
\end{lemma}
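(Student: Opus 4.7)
The plan is to verify first that each of $\Phi_{-3}$, $\Phi_{-2}$, $\Phi_0$ is a weak Jacobi form of the stated weight and index $M := \begin{psmallmatrix}2&1\\1&2\end{psmallmatrix}$, and then use Proposition \ref{prop:free} together with the $A_2$-case of Wirthm\"uller's theorem \cite{W92} to conclude that these three forms freely generate $J^w_{*, M}$ over $\mathbb{C}[E_4, E_6]$.

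For the constructions: the three theta factors in $\Phi_{-3}$ have matrix indices $\begin{psmallmatrix}1&0\\0&0\end{psmallmatrix}$, $\begin{psmallmatrix}0&0\\0&1\end{psmallmatrix}$ and $\begin{psmallmatrix}1&1\\1&1\end{psmallmatrix}$, which sum to $M$, and total weight $\tfrac{3}{2}$; dividing by $\eta^9$ (weight $\tfrac{9}{2}$) gives weight $-3$, and the leading $q^{3/8}$ of $\vartheta^3$ cancels the $q^{3/8}$ of $\eta^9$ so no negative $q$-powers appear. For $\Phi_{-2}$, observation (i) applied with $\lambda_1 = (1,0)$, $\lambda_2 = (0,1)$, $\lambda_3 = -(1,1)$ (summing to $0$) shows that the bracketed combination is a meromorphic Jacobi form of weight $1$ and index $0$; its simple poles along $z = 0$, $w = 0$ and $z+w = 0$ are cancelled by the simple zeros of $\Phi_{-3}$ coming from the corresponding theta factors, yielding a holomorphic weak Jacobi form of weight $-2$ and index $M$. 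For $\Phi_0$, the Serre derivative of observation (ii) preserves the (positive-definite) index and raises the weight by $2$, so $\Phi_0 \in J^w_{0, M}$.

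To establish the basis property, Proposition \ref{prop:free} gives that $J^w_{*, M}$ is free of rank $\det(M) = 3$, with even and odd pieces of ranks $2$ and $1$ respectively. The $A_2$-case of Wirthm\"uller's theorem pins down the Hilbert series as $(t^{-3}+t^{-2}+t^0) / \bigl((1-t^4)(1-t^6)\bigr)$, so it suffices to check $\mathbb{C}[E_4, E_6]$-linear independence of $\Phi_{-3}, \Phi_{-2}, \Phi_0$. Suppose $A \Phi_{-3} + B \Phi_{-2} + C \Phi_0 = 0$. Under $(z, w) \mapsto (-z, -w)$, the theta identity $\vartheta(\tau, -u) = -\vartheta(\tau, u)$ forces $\Phi_{-3}$ to be antisymmetric and $\Phi_{-2}, \Phi_0$ to be symmetric; the relation splits as $A \Phi_{-3} = 0$ (so $A = 0$) and $B \Phi_{-2} + C \Phi_0 = 0$. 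Extracting the $q^0$-coefficient of the latter gives $b_0 \Phi_{-2}|_{q^0} + c_0 \Phi_0|_{q^0} = 0$ with $b_0, c_0$ the constant terms of $B, C$; since the two listed $q^0$ Laurent polynomials differ only in their constant term ($-6$ versus $18$), they are $\mathbb{C}$-linearly independent, forcing $b_0 = c_0 = 0$. Hence $B, C$ are cusp forms; writing $B = \Delta \tilde B$, $C = \Delta \tilde C$ and dividing by $\Delta$ reduces to a relation of weight $12$ less, and descending induction closes the argument. The main obstacle is really this appeal to Wirthm\"uller: without it one would need to derive the Hilbert series from scratch via the $q^0$-term isomorphism in Proposition \ref{prop:free}, counting the symmetric and antisymmetric minimal-norm Laurent polynomials supported on the cosets of $L'/L$.
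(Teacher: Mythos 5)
Your proposal is correct and follows essentially the same route as the paper, which offers no written argument beyond the parenthetical citation of Wirthm\"uller's $A_2$ theorem: you rely on the same external input to pin down the weights $-3,-2,0$ of the free generators, and your verifications of the constructions, of the rank count via Proposition \ref{prop:free}, and of linear independence from the $q^0$-terms are the routine details the paper leaves implicit. The only point worth making explicit is why Wirthm\"uller (which concerns $W(A_2)$-invariant forms) governs \emph{all} weak Jacobi forms of this index, namely that the Weyl group acts trivially on the discriminant group $L'/L\cong\mathbb{Z}/3$, so every index-one form is automatically Weyl-invariant.
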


\begin{lemma} The weak Jacobi form \begin{align*} \Phi_{0, \begin{psmallmatrix} 3 & 2 \\ 2 & 3 \end{psmallmatrix}} &= \frac{5}{2} \mathcal{S} \Big( \frac{\vartheta(\tau, z) \vartheta(\tau, w) \phi_{0, 1}(\tau, z+w)}{\eta^{6}(\tau)} \Big) \\ &= -\frac{1}{2}\zeta^{-3/2} \omega^{-3/2} + \frac{1}{2}\zeta^{-3/2} \omega^{-1/2} + \frac{1}{2}\zeta^{-1/2}\omega^{-3/2} + \frac{11}{2} \zeta^{-1/2} \omega^{-1/2} \\ &\quad \quad + \frac{11}{2} \zeta^{1/2} \omega^{1/2} + \frac{1}{2}\zeta^{1/2} \omega^{3/2} + \frac{1}{2}\zeta^{3/2} \omega^{1/2} - \frac{1}{2}\zeta^{3/2} \omega^{3/2} + O(q) \end{align*} of weight $0$ and index $\begin{psmallmatrix}3 & 2 \\ 2 & 3 \end{psmallmatrix}$ satisfies $$\Phi_{0, \begin{psmallmatrix} 3 & 2 \\ 2 & 3 \end{psmallmatrix}}(\tau, z, -z) = \phi_{0, 1}(\tau, z)$$ and $$\Phi_{0, \begin{psmallmatrix} 3 & 2 \\ 2 & 3 \end{psmallmatrix}}(\tau, z, 0) = \Phi_{0, \begin{psmallmatrix} 3 & 2 \\ 2 & 3 \end{psmallmatrix}}(\tau, 0, z) = 6 \cdot \phi_{0, 3/2}(\tau, z).$$
\end{lemma}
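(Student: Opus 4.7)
The plan is to (i) recognize $\Phi_{0, M}$ as the Serre derivative of a product of rank-one Jacobi forms, (ii) compute its $q^0$-term directly, and (iii) pin down the two specializations using uniqueness in one-dimensional rank-one spaces.

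Define
$$F(\tau, z, w) := \frac{\vartheta(\tau, z)\, \vartheta(\tau, w)\, \phi_{0, 1}(\tau, z+w)}{\eta^6(\tau)}.$$
Using $\phi_{-1, 1/2} = \vartheta / \eta^3$, this equals $\phi_{-1, 1/2}(\tau, z)\, \phi_{-1, 1/2}(\tau, w)\, \phi_{0, 1}(\tau, z+w)$, a product of three rank-one weak Jacobi forms pulled back along $z$, $w$, and $z+w$. Their matrix indices $\begin{psmallmatrix}1 & 0 \\ 0 & 0\end{psmallmatrix}$, $\begin{psmallmatrix}0 & 0 \\ 0 & 1\end{psmallmatrix}$, and $\begin{psmallmatrix}2 & 2 \\ 2 & 2\end{psmallmatrix}$ sum to $M := \begin{psmallmatrix}3 & 2 \\ 2 & 3\end{psmallmatrix}$, and their weights sum to $-2$. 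Since $M$ is positive-definite, the Serre derivative $\mathcal{S}$ preserves the index and raises the weight by two, so $\Phi_{0, M} := (5/2)\, \mathcal{S} F$ is a weak Jacobi form of weight $0$ and matrix index $M$.

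To verify the explicit Fourier expansion, compute the $q^0$-term of $F$ by multiplying the leading Laurent polynomials $\zeta^{1/2} - \zeta^{-1/2}$, $\omega^{1/2} - \omega^{-1/2}$, and $(\zeta\omega)^{-1} + 10 + \zeta\omega$ of the three factors; this produces ten Fourier monomials with coefficients drawn from $\{\pm 1, \pm 10, \pm 11\}$. The formula $\mathcal{S} = \mathcal{H} - (\tfrac{k}{12} - \tfrac{\mathrm{rank}(L)}{24}) E_2$, with $k = -2$ and $\mathrm{rank}(L) = 2$, reduces to $\mathcal{S} = \mathcal{H} + \tfrac{1}{4}\, E_2$, so at the $q^0$-level each Fourier mode $c\, \zeta^{r_1} \omega^{r_2}$ picks up the scalar factor $\tfrac{1}{4} - \tfrac{1}{2}\, (r_1, r_2)\, M^{-1}\, (r_1, r_2)^T$, where $M^{-1} = \tfrac{1}{5}\begin{psmallmatrix}3 & -2 \\ -2 & 3\end{psmallmatrix}$. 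Evaluating this scalar at each of the ten monomials and then multiplying by $5/2$ reproduces the stated Laurent polynomial.

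Finally, the substitution $(z, w) \mapsto (\lambda_1 s, \lambda_2 s)$ sends a weak Jacobi form of matrix index $M$ to a weak rank-one Jacobi form of EZ index $\tfrac{1}{2}\, \lambda^T M \lambda$ and the same weight: for $\lambda = (1, -1)$ this index is $1$, and for $\lambda = (1, 0)$ or $(0, 1)$ it is $3/2$. By Proposition~\ref{prop:rankone}, the target spaces $J_{0, 1}^w = \mathbb{C}\, \phi_{0, 1}$ and $J_{0, 3/2}^w = \mathbb{C}\, \phi_{0, 3/2}$ are one-dimensional, so each specialization is determined by a single constant. Substituting $\omega = \zeta^{-1}$ in the $q^0$-expansion of $\Phi_{0, M}$ and collecting powers of $\zeta$ gives $\zeta^{-1} + 10 + \zeta$, the $q^0$-term of $\phi_{0, 1}$, yielding $\Phi_{0, M}(\tau, z, -z) = \phi_{0, 1}(\tau, z)$; substituting $\omega = 1$ gives $6(\zeta^{-1/2} + \zeta^{1/2})$, which is $6$ times the $q^0$-term of $\phi_{0, 3/2}$, yielding $\Phi_{0, M}(\tau, z, 0) = 6\, \phi_{0, 3/2}(\tau, z)$. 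The identity $\Phi_{0, M}(\tau, 0, z) = 6\, \phi_{0, 3/2}(\tau, z)$ then follows from the manifest symmetry $F(\tau, z, w) = F(\tau, w, z)$. The only calculation that takes any care is the $q^0$-term of $\mathcal{S}F$, which boils down to evaluating $r^T M^{-1} r$ at ten half-integer lattice points and is routine rather than a genuine obstacle.
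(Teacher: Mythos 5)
Your proof is correct and is essentially the verification the paper leaves implicit: the lemma is stated without proof, and its content (the Serre-derivative construction, the $q^0$-expansion, and the specializations) is exactly what you check. Your computation of the $q^0$-term via the factor $\tfrac14 - \tfrac12 r^T M^{-1} r$ on each Fourier mode, and the reduction of the two pullback identities to $q^0$-comparisons in the one-dimensional spaces $J^w_{0,1}$ and $J^w_{0,3/2}$ (using Proposition \ref{prop:rankone}), are all sound.
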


\begin{lemma} The weak Jacobi form \begin{align*} \Phi_{0, \begin{psmallmatrix} 3 & 1 \\ 1 & 3 \end{psmallmatrix}}(\tau, z, w) &= \frac{29}{2} E_4(\tau) \phi_{-1, 1/2}(\tau, z) \phi_{-1, 1/2}(\tau, w) \Phi_{-2, \begin{psmallmatrix} 2 & 1 \\ 1 & 2 \end{psmallmatrix}}(\tau, z, w) \\ &\quad + 48 \cdot \mathcal{S} \Big( \phi_{-1, 1/2}(\tau, z) \phi_{-1, 1/2}(\tau, w) \Phi_{0, \begin{psmallmatrix} 2 & 1 \\ 1 & 2 \end{psmallmatrix}}(\tau, z, w) \Big) \\ &= -\frac{1}{2}\zeta^{-3/2} \omega^{-3/2} + \frac{1}{2} \zeta^{-3/2} \omega^{1/2} + \frac{103}{2} \zeta^{-1/2} \omega^{-1/2} + 20 \zeta^{-1/2} \omega^{1/2} + \frac{1}{2} \zeta^{-1/2} \omega^{3/2} \\ &\quad\quad + \frac{1}{2} \zeta^{1/2} \omega^{-3/2} + 20 \zeta^{1/2} \omega^{-1/2} + \frac{103}{2} \zeta^{1/2} \omega^{1/2} + \frac{1}{2} \zeta^{3/2} \omega^{-1/2} - \frac{1}{2}\zeta^{3/2} \omega^{3/2} + O(q) \end{align*} of weight $0$ and index $\begin{psmallmatrix} 3 & 1 \\ 1 & 3 \end{psmallmatrix}$ satisfies $$\Phi_{0, \begin{psmallmatrix} 3 & 1 \\ 1 & 3 \end{psmallmatrix}}(\tau, z, -z) = \phi_{0, 1}(\tau, z)^2$$ and $$\Phi_{0, \begin{psmallmatrix} 3 & 1 \\ 1 & 3 \end{psmallmatrix}}(\tau, z, 0) = \Phi_{0, \begin{psmallmatrix} 3 & 1 \\ 1 & 3 \end{psmallmatrix}}(\tau, 0, z) = 72 \phi_{0, 3/2}(\tau, z).$$
\end{lemma}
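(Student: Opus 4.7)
The plan is to verify three things in order: that the stated formula defines a weak Jacobi form of weight $0$ and index $\begin{psmallmatrix} 3 & 1 \\ 1 & 3 \end{psmallmatrix}$ with the claimed $q^0$-expansion, that it restricts to $\phi_{0,1}^2$ at $w=-z$, and that it restricts to $72\,\phi_{0,3/2}$ at $w=0$.

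First, one checks that each summand of the defining formula is itself a weak Jacobi form of the asserted weight and index. The product $E_4\,\phi_{-1,1/2}(\tau,z)\phi_{-1,1/2}(\tau,w)\Phi_{-2,\begin{psmallmatrix} 2 & 1 \\ 1 & 2 \end{psmallmatrix}}$ has weight $4-1-1-2=0$ and index $\begin{psmallmatrix} 1 & 0 \\ 0 & 0 \end{psmallmatrix}+\begin{psmallmatrix} 0 & 0 \\ 0 & 1 \end{psmallmatrix}+\begin{psmallmatrix} 2 & 1 \\ 1 & 2 \end{psmallmatrix}=\begin{psmallmatrix} 3 & 1 \\ 1 & 3 \end{psmallmatrix}$; the Serre-derivative summand is $\mathcal{S}$ applied to a weak Jacobi form of weight $-2$ and index $\begin{psmallmatrix} 3 & 1 \\ 1 & 3 \end{psmallmatrix}$, and by the properties of $\mathcal{S}$ recalled in \S\ref{sec:construction} the result is still a weak Jacobi form of the same index, now of weight $0$. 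The claimed $q^0$-expansion then follows by direct calculation: expand the first summand using the known $q^0$-terms of $\phi_{-1,1/2}$ and $\Phi_{-2,\begin{psmallmatrix} 2 & 1 \\ 1 & 2 \end{psmallmatrix}}$, and for the Serre-derivative summand use $\tfrac{3}{\pi^2}G_2\big|_{q^0}=1$ to reduce to $\mathcal{S}f\big|_{q^0}=\mathcal{H}f\big|_{q^0}-\bigl(\tfrac{k}{12}-\tfrac{n}{24}\bigr)f\big|_{q^0}$. With $k=-2$ and rank $n=2$ the scalar is $-\tfrac{1}{4}$, and $\mathcal{H}$ multiplies a $q^0$-monomial $\zeta^{r_1}\omega^{r_2}$ by $-\tfrac{1}{16}(3r_1^2-2r_1r_2+3r_2^2)$, computed from $M^{-1}=\tfrac{1}{8}\begin{psmallmatrix} 3 & -1 \\ -1 & 3 \end{psmallmatrix}$.

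For the restriction at $w=-z$: since $Q(z,-z)=\tfrac12(z,-z)\begin{psmallmatrix} 3 & 1 \\ 1 & 3 \end{psmallmatrix}(z,-z)^T=2z^2$, the restricted form lies in $J^w_{0,2}$. By Proposition~\ref{prop:rankone} this space is spanned by $\phi_{0,1}^2$ and $E_4\phi_{-2,1}^2$, whose $q^0$-terms are $\zeta^{-2}+20\zeta^{-1}+102+20\zeta+\zeta^2$ and $\zeta^{-2}-4\zeta^{-1}+6-4\zeta+\zeta^2$; these Laurent polynomials are linearly independent, so any element of $J^w_{0,2}$ is determined by its $q^0$-term. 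Substituting $\omega=\zeta^{-1}$ into the $q^0$-expansion of $\Phi$ given in the statement, the pairs $\pm\tfrac12\,\zeta^{\pm3/2}\omega^{\pm3/2}$ cancel and the remaining terms collect to $\zeta^{-2}+20\zeta^{-1}+102+20\zeta+\zeta^2=\phi_{0,1}^2\big|_{q^0}$, proving the identity.

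Finally, for the restriction at $w=0$: now $Q(z,0)=\tfrac32 z^2$, so the restriction lies in $J^w_{0,3/2}$, which by Proposition~\ref{prop:rankone} is one-dimensional and spanned by $\phi_{0,3/2}$; it suffices to match the $q^0$-term. Setting $\omega=1$ in the $q^0$-expansion of $\Phi$ yields $72(\zeta^{-1/2}+\zeta^{1/2})$, matching $72\,\phi_{0,3/2}\big|_{q^0}$. The equality $\Phi(\tau,0,z)=\Phi(\tau,z,0)$ follows from the symmetry of the defining formula under $(z,w)\mapsto(w,z)$, since both $\Phi_{-2,\begin{psmallmatrix} 2 & 1 \\ 1 & 2 \end{psmallmatrix}}$ and $\Phi_{0,\begin{psmallmatrix} 2 & 1 \\ 1 & 2 \end{psmallmatrix}}$ are manifestly symmetric in $(z,w)$. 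The main computational obstacle is the bookkeeping of the $q^0$-expansion of the Serre-derivative summand; everything else reduces to the two dimension counts above and elementary substitutions in Laurent polynomials.
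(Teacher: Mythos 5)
Your proposal is correct, and it supplies exactly the verification the paper leaves implicit: the paper states this lemma without proof, and the intended argument is precisely your combination of weight/index bookkeeping, the $q^0$-computation via $\mathcal{S}f|_{q^0}=\mathcal{H}f|_{q^0}+\tfrac14 f|_{q^0}$, and the observation that $J^w_{0,2}$ and $J^w_{0,3/2}$ are small enough that elements are determined by their $q^0$-terms (I checked your constants; they all match the stated expansion). One cosmetic slip: under $\omega=\zeta^{-1}$ the terms $-\tfrac12\zeta^{\mp 3/2}\omega^{\mp 3/2}$ do not cancel but each contribute $-\tfrac12$ to the constant term, giving $\tfrac{103}{2}+\tfrac{103}{2}-1=102$ as required, so the conclusion is unaffected.
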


\section{Weak Jacobi forms of even weight}\label{sec:even}

\begin{lemma} \label{lem:surj} Let $M = \begin{psmallmatrix} a & b \\ b & c \end{psmallmatrix}$ be an index matrix. Suppose either 
\begin{itemize}
\item[(i)] $b$ is even, or
\item[(ii)] $b$ is odd and $b < \min(a, c)$.
\end{itemize}
Let $k \in \mathbb{Z}$ be a weight with $k \equiv a + c$ mod $2$. Then the pullback map $$\mathcal{P} : J_{k, M}^w \longrightarrow J_{k, (a + c)/2 - b}^w, \; \; F(\tau, z, w) \mapsto F(\tau, z, -z)$$ is surjective.
\end{lemma}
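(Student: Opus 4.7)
The plan is to prove surjectivity by producing, for every $\phi \in J_{k,m}^w$ with $m=(a+c)/2-b$, an explicit preimage $F \in J_{k,M}^w$ satisfying $F(\tau,z,-z)=\phi$. I will build $F$ as a product of ``elementary'' Jacobi forms whose restrictions at $w=-z$ are already computed in \S\ref{sec:construction}; the hypotheses (i)--(ii) on $b$ turn out to be exactly the conditions under which the required decomposition of $M$ exists.

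I parametrize $M = \begin{psmallmatrix}a & b \\ b & c\end{psmallmatrix}$ by the triple $(a',b',c') = (a-b,\,b,\,c-b) \in \mathbb{N}_0^3$, so that writing $M$ as a sum of simpler index matrices corresponds to writing $(a',b',c')$ as a sum of smaller triples. The elementary forms I use, with their parameter triples and their restrictions at $w=-z$, are: the rank-one forms $\phi_{-2,1}(\tau,z)$ and $\phi_{0,1}(\tau,z)$, of triple $(2,0,0)$, restricting to themselves; their $w$-counterparts of triple $(0,0,2)$, restricting to themselves up to sign; the form $\phi_{0,1}(\tau,z+w)$ of triple $(0,2,0)$, restricting to the constant $\phi_{0,1}(\tau,0)=12$; the $A_2$-type forms $\Phi_{-2,\begin{psmallmatrix}2&1\\1&2\end{psmallmatrix}}$ and $\Phi_{0,\begin{psmallmatrix}2&1\\1&2\end{psmallmatrix}}$ of triple $(1,1,1)$, restricting to $\phi_{-2,1}$ and $2\phi_{0,1}$ respectively (as already computed); and $\Phi_{0,\begin{psmallmatrix}3&2\\2&3\end{psmallmatrix}}$ of triple $(1,2,1)$, restricting to $\phi_{0,1}$. (For half-integer $m$, arising when $a+c$ is odd and hence $k$ is odd, one augments this library by $\phi_{-1,1/2}$-, $\phi_{0,3/2}$-type blocks.) Any product of such forms whose parameter triples sum to $(a',b',c')$ lies in $J_{*,M}^w$ and restricts at $w=-z$ to the product of the elementary restrictions, i.e.\ to a monomial in $\phi_{-2,1}$ and $\phi_{0,1}$ (up to a nonzero scalar) of Eichler--Zagier index $m$.

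It then remains to verify two things: (a) by varying the decomposition, the elementary choices within each triple, and the $\mathbb{C}[E_4,E_6]$-scaling, these products span every such monomial, hence all of $J_{k,m}^w$ by the Eichler--Zagier structure theorem (Proposition~\ref{prop:rankone}); and (b) that the decomposition of $(a',b',c')$ into admissible unit triples exists. For (b): if $b$ is even, the decomposition
$$ (a', b', c') = a'\cdot(1,0,0) + (b/2)\cdot(0,2,0) + c'\cdot(0,0,1) $$
always works. If $b$ is odd, every decomposition must include at least one $(1,1,1)$: indeed, the only other unit with odd middle entry is $(0,1,0)$, which corresponds to $\phi_{-1,1/2}(\tau,z+w)$, and this vanishes at $w=-z$ (since $\vartheta$ vanishes at lattice points), so cannot contribute to a nonzero restriction. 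Using one $(1,1,1)$ consumes a unit from each of $a'$ and $c'$, so we require $a'\ge 1$ and $c'\ge 1$, equivalently $b<\min(a,c)$, which is exactly hypothesis~(ii). The remaining triple $(a'-1,\,b-1,\,c'-1)$ has even middle entry and is handled by the even case.

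The main obstacle is essentially the existence check in (b), together with an analogous parity bookkeeping in the half-integer case. The surjectivity statement then follows because the monomials produced from the various decompositions, weighted by arbitrary elements of $\mathbb{C}[E_4,E_6]$, exhaust the Eichler--Zagier generating set of $J_{k,m}^w$; in particular the scalar restrictions coming from $(0,2,0)$-blocks can be absorbed into $\mathbb{C}[E_4,E_6]$-coefficients, and the weight-parity hypothesis $k \equiv a+c \pmod 2$ ensures that only those monomials whose intrinsic weight parity matches $k$ need to appear in the expansion of $\phi$.
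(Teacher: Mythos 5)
Your overall strategy is the same as the paper's: peel off $\phi_{0,1}(\tau,z+w)$ factors (your $(0,2,0)$ blocks) to reduce the off-diagonal entry, use the $A_2$-index forms $\Phi_{-2,\begin{psmallmatrix}2&1\\1&2\end{psmallmatrix}}$, $\Phi_{0,\begin{psmallmatrix}2&1\\1&2\end{psmallmatrix}}$ --- whose pullbacks are nonzero multiples of $\phi_{-2,1}$ and $\phi_{0,1}$ --- to absorb one odd unit of $b$ (your single $(1,1,1)$ block), and realize the rest of each monomial by distributing rank-one factors between the variables $z$ and $w$. The genuine gap is your step (a). That the restrictions of your products span all of $J^w_{k,m}$ is the entire content of the lemma, and it does not follow from ``varying the decomposition'': once the decomposition $a'(1,0,0)+(b/2)(0,2,0)+c'(0,0,1)$ is fixed, the pullbacks you obtain are exactly products of an index-$a'/2$ monomial with an index-$c'/2$ monomial, so you must prove that every Eichler--Zagier monomial of index $m=(a'+c')/2$ and the right weight parity admits such a factorization. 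This is a subset-sum condition on the factor indices $\tfrac12,1,\tfrac32$ and it can fail: $\phi_{0,1}$ (index $1$) is not a product of two index-$\tfrac12$ forms. In fact, for $M=\begin{psmallmatrix}1&0\\0&1\end{psmallmatrix}$ and even $k$ the statement itself fails: by Theorem \ref{thm:diag} and Proposition \ref{prop:rankone}, $J^w_{*,M}=\mathbb{C}[E_4,E_6]\,\phi_{-1,1/2}(\tau,z)\phi_{-1,1/2}(\tau,w)$ is free of rank $\det M=1$, so the image of $\mathcal{P}$ is $\mathbb{C}[E_4,E_6]\,\phi_{-2,1}$ and misses $\phi_{0,1}\in J^w_{0,1}$. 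So no amount of rearranging your blocks can close the gap in the stated generality.

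The paper's proof does the distribution explicitly rather than asserting it: for $b=0$ it writes $\psi=Q(E_4,E_6,\phi_{-1,1/2},\phi_{0,1})$ (no $\phi_{0,3/2}$, by the weight-parity hypothesis) and moves $c_1$ factors of $\phi_{-1,1/2}$ and $c_2$ factors of $\phi_{0,1}$ into the variable $w$ subject to $c_1+2c_2=c$; for $b=1$ it first writes $\psi=\phi_{-2,1}P+\phi_{0,1}Q$, so that the single indivisible integer-index factor is supplied diagonally by $\Phi_{-2,\begin{psmallmatrix}2&1\\1&2\end{psmallmatrix}}$ or $\Phi_{0,\begin{psmallmatrix}2&1\\1&2\end{psmallmatrix}}$, and only the remainders $P,Q$ need to be split. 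Even then, the solvability of $c_1+2c_2=c$ with $0\le c_1\le n_1$, $0\le c_2\le n_2$ has to be checked; it always holds when $a$ and $c$ are even --- which is the only situation in which the lemma is invoked later in the paper --- but fails when $a$ and $c$ are both odd and the monomial is a pure power of $\phi_{0,1}$, which is exactly the counterexample above. To repair your argument you should carry out this factorization step for an arbitrary monomial, and either add the hypothesis that $a$ and $c$ are even (harmless for the applications) or otherwise exclude the odd-diagonal cases where the claim breaks down.
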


The condition $b < \min(a, c)$ when $b$ is odd is necessary to rule out the cases $(a, b, c) = (1, 1, 1)$, $(2, 1, 1)$, $(1, 1, 2)$.

\begin{proof} First note that the map $\mathcal{P}$ satisfies $$\mathcal{P}(\phi_{0, 1}(\tau, z + w) \cdot F) = 12 \mathcal{P}(F),$$ since $\phi_{0,1}(\tau, z+w)$ is mapped to $\phi_{0, 1}(\tau, 0) = 12$. In particular, if we have proved that $\mathcal{P}$ is surjective for some index $\begin{psmallmatrix} a & b \\ b & c \end{psmallmatrix}$ then it is also surjective for the index $\begin{psmallmatrix} a + 2 & b + 2 \\ b + 2 & c + 2 \end{psmallmatrix}$. Therefore we may assume that $b=0$ or $b=1$ . 

\smallskip

Suppose $b = 0$. Let $\psi \in J_{k, (a+c)/2}^w$ be any Jacobi form of index $(a+c)/2$. By our assumption on the weight, we can write $\psi$ as a polynomial $$\psi = Q(E_4, E_6, \phi_{-1, 1/2}, \phi_{0, 1}).$$ In each monomial in $Q$, we replace any $c_1$ copies of $\phi_{-1, 1/2}(\tau, z)$ and $c_2$ copies of $\phi_{0, 1}(\tau, z)$ by $-\phi_{-1, 1/2}(\tau, w)$  and $\phi_{0, 1}(\tau, w)$ for some indices $c_1, c_2$ with $2c_1 + c_2 = c$, 
to obtain a Jacobi form $\hat Q$ of index $\begin{psmallmatrix} a & 0 \\ 0 & c \end{psmallmatrix}$ with $\mathcal{P}(\hat Q) = \psi$. 

\smallskip

Finally suppose $b = 1$, and by assumption $a, c \ge 2$. Let $\psi \in J_{k, (a+c)/2 - 1}^w$ be any Jacobi form. Since $\psi$ has index at least $1$, we can write it in the form $$\psi = \phi_{-2, 1} P + \phi_{0, 1} Q, \; \; P,Q \in \mathbb{C}[E_4, E_6, \phi_{-1, 1/2}, \phi_{0, 1}].$$ Let $\hat P, \hat Q$ be any Jacobi forms of index $\begin{psmallmatrix} a-2 & 0 \\ 0 & c-2 \end{psmallmatrix}$ with $\mathcal{P}(\hat P) = P$ and $\mathcal{P}(\hat Q) = Q$ as in the previous paragraph. Then the form $$F = \Phi_{-2, \begin{psmallmatrix} 2 & 1 \\ 1 & 2 \end{psmallmatrix}} \hat P + \Phi_{0, \begin{psmallmatrix} 2 & 1 \\ 1 & 2 \end{psmallmatrix}} \hat Q$$ satisfies $\mathcal{P}(F) = \psi$.
\end{proof}

This lemma quickly leads to a simple set of generators when both the weight and the off-diagonal index $b$ is even:

\begin{theorem}\label{thm:even} The graded ring $$\mathcal{J}_0 := J^w_{2*, \begin{psmallmatrix} 2* & 2* \\ 2* & 2* \end{psmallmatrix}} = \bigoplus_{\substack{a, b, c \in \mathbb{N}_0 \\ b \le \min(a, c)}} \bigoplus_{k \in 2 \mathbb{Z}} J_{k, \begin{psmallmatrix} 2a & 2b \\ 2b & 2c \end{psmallmatrix}}^w$$ is generated by the Eisenstein series $E_4, E_6$, the rank one Jacobi forms $$\phi_{-2, 1}(\tau, z), \; \phi_{-2, 1}(\tau, w), \; \phi_{-2, 1}(\tau, z + w),$$ $$\phi_{0, 1}(\tau, z), \; \phi_{0, 1}(\tau, w), \; \phi_{0, 1}(\tau, z+w),$$ and the products $$\phi_{-1, 2}(\tau, z) \phi_{-1, 2}(\tau, w), \; \phi_{-1, 2}(\tau, z) \phi_{-1, 2}(\tau, z+w), \; \phi_{-1, 2}(\tau, w) \phi_{-1, 2}(\tau, z+w).$$
\end{theorem}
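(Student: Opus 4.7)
The plan is to prove this by induction on the total index $n = a + b + c$, with the base case $n = 0$ being trivial (the index vanishes and $F \in \mathbb{C}[E_4, E_6]$), and to treat $b = 0$ as an extended base case using Theorem \ref{thm:diag}: every form of index $\begin{psmallmatrix} 2a & 0 \\ 0 & 2c\end{psmallmatrix}$ and even total weight decomposes into even-times-even plus odd-times-odd rank-one contributions, and the Eichler--Zagier generators of the odd summand are $\phi_{-1,2}(\tau, z)$ on the $z$-side and $\phi_{-1,2}(\tau, w)$ on the $w$-side, so the odd-odd terms are captured by the generator $\phi_{-1,2}(\tau, z)\phi_{-1,2}(\tau, w)$.

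For the inductive step I would reduce a form $F \in J^w_{k, \begin{psmallmatrix}2a & 2b \\ 2b & 2c\end{psmallmatrix}}$ modulo two layers of pullback information at $w = -z$. First, since $2b$ is even, Lemma \ref{lem:surj}(i) applies and $\psi := F(\tau, z, -z)$ is a weak Jacobi form of even weight and integer index $a+c-2b$, so Eichler--Zagier expresses it as a polynomial in $E_4, E_6, \phi_{-2,1}(\tau, z), \phi_{0,1}(\tau, z)$. Using the nonnegative decomposition $\begin{psmallmatrix}2a & 2b \\ 2b & 2c\end{psmallmatrix} = (a-b)\begin{psmallmatrix}2 & 0 \\ 0 & 0\end{psmallmatrix} + (c-b)\begin{psmallmatrix}0 & 0 \\ 0 & 2\end{psmallmatrix} + b\begin{psmallmatrix}2 & 2 \\ 2 & 2\end{psmallmatrix}$ (valid by $b \le \min(a,c)$), I would lift each monomial of $\psi$ to a polynomial $\tilde F$ in the listed generators by distributing the factors of $\phi_{-2,1}$ and $\phi_{0,1}$ among the $z$- and $w$-directions and inserting $b$ copies of $\phi_{0,1}(\tau, z+w)/12$ (which uses $\phi_{0,1}(\tau, 0) = 12$). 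Then $H := F - \tilde F$ vanishes along $w = -z$, so $H = \phi_{-1,1/2}(\tau, z+w) L$ for a weak Jacobi form $L$ of odd weight $k+1$ and index $\begin{psmallmatrix}2a-1 & 2b-1 \\ 2b-1 & 2c-1\end{psmallmatrix}$. The restriction $L(\tau, z, -z)$ has odd weight and integer index, so by Eichler--Zagier it factors as $\phi_{-1,2}(\tau, z) R(\tau, z)$ with $R$ of even weight.

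Second, I would match this first-order data using the generator $\phi_{-1,2}(\tau, z)\phi_{-1,2}(\tau, z+w)$. Writing $\phi_{-1,2}(\tau, z+w) = \phi_{-1,1/2}(\tau, z+w)\phi_{0,3/2}(\tau, z+w)$ and using $\phi_{0,3/2}(\tau, 0) = 2$, the candidate correction $\tilde H_1 := \tfrac{1}{2}\phi_{-1,2}(\tau, z)\phi_{-1,2}(\tau, z+w) \tilde R$ (for an inductive lift $\tilde R$ of $R$) vanishes at $w = -z$ but has the prescribed first-order jet there. Hence $H - \tilde H_1$ vanishes to order two along $w = -z$, and therefore is divisible by $\phi_{-1,1/2}(\tau, z+w)^2 = \phi_{-2,1}(\tau, z+w)$. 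The quotient lies in $J^w_{k+2, \begin{psmallmatrix}2(a-1) & 2(b-1) \\ 2(b-1) & 2(c-1)\end{psmallmatrix}}$, which has total index $n - 3$, and so is in the subring $\mathcal{J}_0'$ generated by the listed elements by the inductive hypothesis. Combining everything gives $F = \tilde F + \tilde H_1 + \phi_{-2,1}(\tau, z+w) \cdot (\text{inductive lift}) \in \mathcal{J}_0'$.

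The \emph{main obstacle} is step two, specifically the bookkeeping of indices for the first-order correction: the construction of $\tilde R$ requires the index $\begin{psmallmatrix}2(a-4) & 2(b-2) \\ 2(b-2) & 2(c-2)\end{psmallmatrix}$ to be a valid Gram matrix with $b - 2 \le \min(a - 4, c - 2)$, which fails on the boundary $b = a$ or $b = c$, and also when $a, b$, or $c$ is small. This is exactly why all three generators $\phi_{-1,2}(\tau, z)\phi_{-1,2}(\tau, w)$, $\phi_{-1,2}(\tau, z)\phi_{-1,2}(\tau, z+w)$, and $\phi_{-1,2}(\tau, w)\phi_{-1,2}(\tau, z+w)$ must appear, and I would use the anharmonic group action (which permutes the indices $(a, b, c)$) to reduce to a configuration where at least one of these three generators gives a feasible lift; equivalently, one checks directly that for any $(a, b, c)$ with $b \le \min(a, c)$ at least one of the three index decompositions respects the inequalities. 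The small-index base cases (e.g.\ $a + c - 2b < 2$, so $R = 0$) are trivial and handled separately.
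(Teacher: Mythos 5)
Your first reduction step (lifting the pullback at $w=-z$ by distributing rank-one factors and then dividing $F-\tilde F$ by $\phi_{-1,1/2}(\tau,z+w)$ to get an odd-weight form $L$ of index $\begin{psmallmatrix}2a-1&2b-1\\2b-1&2c-1\end{psmallmatrix}$) is exactly the paper's first step, and your jet computation for the second step is correct as far as it goes. The gap is in the second step. Your correction term $\tfrac12\phi_{-1,2}(\tau,z)\phi_{-1,2}(\tau,z+w)\tilde R$ forces $\tilde R$ to have index $\begin{psmallmatrix}2(a-4)&2(b-2)\\2(b-2)&2(c-2)\end{psmallmatrix}$, and your proposed repair --- that after an anharmonic permutation at least one of the three products $\phi_{-1,2}(\tau,u)\phi_{-1,2}(\tau,v)$ gives a feasible decomposition --- is false. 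Take the index $\begin{psmallmatrix}4&2\\2&4\end{psmallmatrix}$, i.e.\ $(a,b,c)=(2,1,2)$: here $0<b<\min(a,c)$, so the reduction-to-diagonal escape is unavailable, and the triple $(2a-2b,2b,2c-2b)=(2,2,2)$ is fixed by every permutation; yet none of the three products, whose indices are $\begin{psmallmatrix}4&0\\0&4\end{psmallmatrix}$, $\begin{psmallmatrix}8&4\\4&4\end{psmallmatrix}$, $\begin{psmallmatrix}4&4\\4&8\end{psmallmatrix}$, leaves a valid nonnegative index matrix when subtracted from $\begin{psmallmatrix}4&2\\2&4\end{psmallmatrix}$. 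Nor is the case vacuous: taking $F=E_4(\tau)\,\phi_{-1,1/2}(\tau,z+w)\phi_{-1,1/2}(\tau,w)\Phi_{0,\begin{psmallmatrix}3&2\\2&3\end{psmallmatrix}}(\tau,z+w,-w)$, one checks $F$ has even weight $2$ and index $\begin{psmallmatrix}4&2\\2&4\end{psmallmatrix}$ with $\mathcal{P}(F)=0$, so $H=F$, and $L(\tau,z,-z)=-6E_4(\tau)\phi_{-1,2}(\tau,z)$, i.e.\ $R=-6E_4\neq 0$. Your induction therefore stalls at such indices.

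The idea you are missing is the paper's choice of first-order correction. It lifts $R$ to a form $G$ of index $\begin{psmallmatrix}2a-2&2b\\2b&2c-2\end{psmallmatrix}$ (always a legal index once $0<b<\min(a,c)$) and corrects by $\phi_{-1,1/2}(\tau,z-w)\,G(\tau,z,w)$; the factor $\phi_{-1,1/2}(\tau,z-w)$ has Gram matrix $\begin{psmallmatrix}1&-1\\-1&1\end{psmallmatrix}$, and its negative off-diagonal entry is precisely what makes the index arithmetic close up uniformly, with no boundary cases. The price is that $\phi_{-1,1/2}(\tau,z-w)$ is not visibly generated by the listed forms; this is repaired by the identity $\phi_{-1,1/2}(\tau,z+w)\phi_{-1,1/2}(\tau,z-w)=\tfrac1{12}\bigl(\phi_{-2,1}(\tau,z)\phi_{0,1}(\tau,w)-\phi_{0,1}(\tau,z)\phi_{-2,1}(\tau,w)\bigr)$, proved by comparing Fourier coefficients in the two-dimensional space $J^w_{-2,\begin{psmallmatrix}2&0\\0&2\end{psmallmatrix}}$. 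Without this identity, or some substitute correction that exists at every index with $0<b<\min(a,c)$, your argument as written cannot be completed.
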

\begin{proof}  Let $\varphi(\tau, z, w)$ be a weak Jacobi form of even weight $k$ and index $\begin{psmallmatrix} 2a & 2b \\ 2b & 2c \end{psmallmatrix}$. We will show by induction on $b$ that $\varphi$ can be written as a polynomial in the claimed generators. When $b = 0$ this follows from the classification of diagonal-index forms in Theorem \ref{thm:diag}. If $b = \min(a, c)$ then one of $\varphi(\tau, z+w, -w)$ and $\varphi(\tau, w, -z-w)$ is a Jacobi form of index with off-diagonal $b=0$. Since this change-of-variables leaves the ring generated by the Jacobi forms in this claim invariant, we may also assume that $b < \min(a, c)$. 

\smallskip

In general, using Lemma \ref{lem:surj} we can find a weak Jacobi form $F$ of index $\begin{psmallmatrix} 2(a - b) & 0 \\ 0 & 2(c - b) \end{psmallmatrix}$ with $\mathcal{P}(F) = \mathcal{P}(\varphi)$. Then the form $$\varphi(\tau, z, w) - (\phi_{0, 1}(\tau, z + w) / 12)^b F(\tau, z, w) \in J_{k, \begin{psmallmatrix} 2a & 2b \\ 2b & 2c \end{psmallmatrix}}^w$$ vanishes at every point $(z, w)$ with $z + w \in \mathbb{Z} + \tau \mathbb{Z}$ and is therefore divisible by $\vartheta(\tau, z+w)$, so $$\varphi_1(\tau, z, w) := \Big(\varphi(\tau, z, w) - (\phi_{0, 1}(\tau, z+w) / 12)^b F(\tau, z, w) \Big) / \phi_{-1, 1/2}(\tau, z+w) \in J_{k+1, \begin{psmallmatrix} 2a - 1  & 2b - 1 \\ 2b - 1 & 2c - 1 \end{psmallmatrix}}^w$$ is again a weak Jacobi form.

The image $$\mathcal{P}(\varphi_1) = \varphi(\tau, z, -z) \in J_{k+1, a + c - 2b}^w$$ is a weak Jacobi form of odd weight and integral index and is therefore a multiple of $\phi_{-1, 2}$. Since $b < \min(a, c)$, we may use Lemma \ref{lem:surj} to find a weak Jacobi form $G$ of index $\begin{psmallmatrix} 2a - 2 & 2b \\ 2b & 2c-2 \end{psmallmatrix}$ with $$\mathcal{P}(\varphi_1) / \phi_{-1, 2} = \mathcal{P}(G).$$ Then $$\mathcal{P}\Big( \varphi_1(\tau, z, w) - \phi_{-1, 1/2}(\tau, z - w) G(\tau, z, w) \Big) = 0,$$ so we can again divide by $\vartheta(\tau, z+w)$ and obtain a weak Jacobi form $$\varphi_2 := \Big( \varphi_1(\tau, z, w) - \phi_{-1, 1/2}(\tau, z-w) G(\tau, z, w) \Big) / \phi_{-1, 1/2}(\tau, z+w) \in J_{k + 2, \begin{psmallmatrix} 2a - 2 & 2b - 2 \\ 2b - 2 & 2c - 2 \end{psmallmatrix}}^w.$$ By induction, $\varphi_2$ is a polynomial as in the theorem. The claim follows by writing $$\varphi = (\phi_{0, 1}(\tau, z+w)/12)^b F(\tau, z, w) + \phi_{-1, 1/2}(\tau, z+w) \phi_{-1, 1/2}(\tau, z - w) G(\tau, z, w) + \phi_{-2, 1}(\tau, z+w) \varphi_2(\tau, z, w)$$ and using the identity $$\phi_{-1, 1/2}(\tau, z+w) \phi_{-1, 1/2}(\tau, z-w) = \frac{1}{12} \Big( \phi_{-2, 1}(\tau, z) \phi_{0, 1}(\tau, w) - \phi_{0, 1}(\tau, z) \phi_{-2, 1}(\tau, w) \Big),$$ which is easily proved using Fourier series since both sides of this equation lie in the two-dimensional space of weak Jacobi forms of weight $-2$ and index $\begin{psmallmatrix} 2 & 0 \\ 0 & 2 \end{psmallmatrix}$ which is spanned by $\phi_{-2, 1}(\tau, z)\phi_{0, 1}(\tau, w)$ and $\phi_{0, 1}(\tau, z) \phi_{-2, 1}(\tau, w)$. 
\end{proof}

The larger ring of weak Jacobi forms for even-weight, even index matrices where the off-diagonal index may be odd is already obtained by including the weak Jacobi forms of $A_2$-index:

\begin{theorem} As a $\mathcal{J}_0$-module, $$J^w_{2*, \begin{psmallmatrix} 2* & 2* + 1 \\ 2* + 1 & 2* \end{psmallmatrix}} = \bigoplus_{\substack{a, b, c \in \mathbb{N}_0 \\ b < \min(a, c)}} \bigoplus_{k \in 2 \mathbb{Z}} J_{k, \begin{psmallmatrix} 2a & 2b + 1 \\ 2b + 1 & 2c \end{psmallmatrix}}^w$$ is spanned by the forms $\Phi_{-2, \begin{psmallmatrix} 2 & 1 \\ 1 & 2 \end{psmallmatrix}}$ and $\Phi_{0, \begin{psmallmatrix} 2 & 1 \\ 1 & 2 \end{psmallmatrix}}$.
\end{theorem}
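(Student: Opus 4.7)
My plan is to prove the statement by induction on $a+c$, following closely the template of the proof of Theorem~\ref{thm:even}, with the rank-two forms $\Phi_{-2, \begin{psmallmatrix} 2 & 1 \\ 1 & 2 \end{psmallmatrix}}$ and $\Phi_{0, \begin{psmallmatrix} 2 & 1 \\ 1 & 2 \end{psmallmatrix}}$ playing the role that the rank-one forms $\phi_{-2,1}(\tau, z+w)$ and $\phi_{0,1}(\tau, z+w)/12$ played there. The base case $a = c = 1, b = 0$ is exactly the even-weight part of the corollary of Wirthm\"uller's theorem stated at the start of \S\ref{sec:construction}: the free $\mathbb{C}[E_4, E_6]$-module $J^w_{*, \begin{psmallmatrix} 2 & 1 \\ 1 & 2 \end{psmallmatrix}}$ has basis $\{\Phi_{-3}, \Phi_{-2}, \Phi_0\}$, and since $\Phi_{-3}$ has odd weight, the even subspace equals $\mathbb{C}[E_4, E_6]\Phi_{-2} \oplus \mathbb{C}[E_4, E_6]\Phi_0$, which lies in $\Phi_{-2}\mathcal{J}_0 + \Phi_0\mathcal{J}_0$.

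For the inductive step, take $\varphi \in J^w_{k, M}$ with $M = \begin{psmallmatrix} 2a & 2b+1 \\ 2b+1 & 2c \end{psmallmatrix}$ and $k$ even, and compute the pullback $\psi := \varphi(\tau, z, -z) \in J^w_{k, m}$ where $m = a + c - 2b - 1 \ge 1$. Since $\psi$ has even weight and positive scalar index, decompose $\psi = \phi_{-2,1}\alpha + \phi_{0,1}\beta$ with $\alpha, \beta$ even-weight forms of scalar index $m-1$. By Lemma~\ref{lem:surj}(i), since the off-diagonal entry $2b$ is even, lift $\alpha, \beta$ to forms $A, B \in J^w_{*, \begin{psmallmatrix} 2(a-1) & 2b \\ 2b & 2(c-1) \end{psmallmatrix}} \subseteq \mathcal{J}_0$ with $\mathcal{P}(A) = \alpha$ and $\mathcal{P}(B) = \beta$. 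Reading off the leading Fourier coefficients given in \S\ref{sec:construction} yields $\Phi_{-2}(\tau, z, -z) = \phi_{-2,1}(\tau, z)$ and $\Phi_0(\tau, z, -z) = 2\phi_{0,1}(\tau, z)$, so $\varphi - \Phi_{-2} A - \tfrac{1}{2}\Phi_0 B$ has zero pullback; by quasi-periodicity it vanishes on the full divisor $z + w \in \mathbb{Z} + \tau\mathbb{Z}$ and is therefore divisible by $\phi_{-1,1/2}(\tau, z+w)$, giving
$$\varphi - \Phi_{-2} A - \tfrac{1}{2}\Phi_0 B = \phi_{-1,1/2}(\tau, z+w)\,\varphi_2, \qquad \varphi_2 \in J^w_{k+1, \begin{psmallmatrix} 2a-1 & 2b \\ 2b & 2c-1 \end{psmallmatrix}}.$$

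To reduce $\varphi_2$ further, compute $\mathcal{P}(\varphi_2) \in J^w_{k+1, m}$ (odd weight). If $m = 1$, the space $J^w_{\text{odd}, 1}$ is zero, forcing $\varphi_2 = 0$. Otherwise write $\mathcal{P}(\varphi_2) = \phi_{-1, 2}\rho$ with $\rho$ of even weight and scalar index $m-2$, and use Lemma~\ref{lem:surj}(ii) to lift $\rho$ to $R \in J^w_{k+2, \begin{psmallmatrix} 2(a-1) & 2b+1 \\ 2b+1 & 2(c-1) \end{psmallmatrix}}$. This index has the same (even, odd, even) parity type as $M$ but with $a+c$ reduced by $2$, so by the inductive hypothesis $R \in \Phi_{-2}\mathcal{J}_0 + \Phi_0\mathcal{J}_0$. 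Since $\phi_{-1, 1/2}(\tau, z-w)|_{w=-z} = \phi_{-1, 1/2}(\tau, 2z) = \phi_{-1, 2}(\tau, z)$, the form $\varphi_2 - \phi_{-1,1/2}(z-w)R$ has zero pullback and equals $\phi_{-1,1/2}(z+w)\varphi_3$ with $\varphi_3 \in J^w_{k+2, \begin{psmallmatrix} 2(a-1) & 2b-1 \\ 2b-1 & 2(c-1) \end{psmallmatrix}}$, another even-weight form of the same parity type with strictly smaller $a+c$, hence inductively in $\Phi_{-2}\mathcal{J}_0 + \Phi_0\mathcal{J}_0$. The identity from the proof of Theorem~\ref{thm:even}, namely $\phi_{-1,1/2}(z+w)\phi_{-1,1/2}(z-w) = \tfrac{1}{12}(\phi_{-2,1}(z)\phi_{0,1}(w) - \phi_{0,1}(z)\phi_{-2,1}(w))$, then exhibits the assembled expression
$$\varphi = \Phi_{-2}A + \tfrac{1}{2}\Phi_0 B + \tfrac{1}{12}\bigl(\phi_{-2,1}(z)\phi_{0,1}(w) - \phi_{0,1}(z)\phi_{-2,1}(w)\bigr) R + \phi_{-2,1}(z+w)\,\varphi_3$$
as a $\mathcal{J}_0$-linear combination of elements already known to lie in $\Phi_{-2}\mathcal{J}_0 + \Phi_0\mathcal{J}_0$, completing the induction.

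The main obstacle is verifying the hypothesis of Lemma~\ref{lem:surj}(ii) in the lift of $\rho$, which requires $2b+1 < \min(2(a-1), 2(c-1))$, equivalently $b + 2 \le \min(a, c)$. In the edge case $b = \min(a,c) - 1$ (and the small companion cases such as $(a,b,c) = (1, 0, c)$), I would first apply the anharmonic group action to reduce to a case with $b = 0$: this action preserves $\mathcal{J}_0$ by Theorem~\ref{thm:even}, and it preserves $\Phi_{-2}\mathcal{J}_0 + \Phi_0\mathcal{J}_0$ because the $A_2$-index $\begin{psmallmatrix} 2 & 1 \\ 1 & 2 \end{psmallmatrix}$ is stable under the $S_3$-permutation of its parameters $(1,1,1)$ and both $\Phi_{-2}$ and $\Phi_0$ turn out to be invariant under the generating substitutions (as one verifies by comparing the $q^0$-terms given in \S\ref{sec:construction}). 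Residual diagonal sub-cases, where $\varphi_2$ has matrix index $\begin{psmallmatrix} 2a-1 & 0 \\ 0 & 2c-1 \end{psmallmatrix}$, can then be settled directly via the tensor decomposition of Theorem~\ref{thm:diag} combined with Proposition~\ref{prop:rankone}.
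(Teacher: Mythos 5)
Your skeleton matches the paper's: pull back along $w=-z$, write the pullback as $\phi_{-2,1}u+\phi_{0,1}v$, lift $u,v$ via Lemma \ref{lem:surj}, subtract multiples of $\Phi_{-2,\begin{psmallmatrix} 2 & 1 \\ 1 & 2\end{psmallmatrix}}$ and $\Phi_{0,\begin{psmallmatrix} 2 & 1 \\ 1 & 2\end{psmallmatrix}}$, divide twice by $\phi_{-1,1/2}(\tau,z+w)$, and absorb the factor $\phi_{-1,1/2}(\tau,z+w)\phi_{-1,1/2}(\tau,z-w)$ via the identity from Theorem \ref{thm:even}. Inducting on $a+c$ instead of $b$ is a harmless variation. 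However, there is a genuine gap precisely where the paper's proof has its real content: the terminal case in which the quotient form acquires a \emph{diagonal odd} index $\begin{psmallmatrix} 2a-1 & 0 \\ 0 & 2c-1\end{psmallmatrix}$. This occurs whenever $b=0$ (and all of your edge cases are funneled there by the anharmonic action, since the off-diagonal entries $2(a-b)-1,\,2b+1,\,2(c-b)-1$ are all odd and can never be permuted to $0$). Theorem \ref{thm:diag} and Proposition \ref{prop:rankone} only tell you that this quotient is a combination of products such as $\phi_{-1,1/2}(\tau,z)\phi_{0,3/2}(\tau,w)\cdot h$ with $h\in\mathcal{J}_0$; after multiplying back by $\phi_{-1,1/2}(\tau,z+w)$ you must still show that
$$\phi_{-1,1/2}(\tau,z)\,\phi_{0,3/2}(\tau,w)\,\phi_{-1,1/2}(\tau,z+w)\in\mathrm{Span}\Big(\Phi_{-2,\begin{psmallmatrix} 2 & 1 \\ 1 & 2\end{psmallmatrix}}\phi_{0,1}(\tau,w),\ \Phi_{0,\begin{psmallmatrix} 2 & 1 \\ 1 & 2\end{psmallmatrix}}\phi_{-2,1}(\tau,w)\Big)$$
(and its mirror in $z\leftrightarrow w$). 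These relations are not formal consequences of the structure theorems; the paper proves them by showing the relevant spaces $J^w_{-2,\begin{psmallmatrix} 2 & 1 \\ 1 & 4\end{psmallmatrix}}$ and $J^w_{-2,\begin{psmallmatrix} 4 & 1 \\ 1 & 2\end{psmallmatrix}}$ are two-dimensional and comparing Fourier coefficients. Saying the diagonal sub-cases "can be settled directly" omits exactly this step, without which the theorem does not close.

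Two smaller problems. First, when $m=1$ the vanishing of $\mathcal{P}(\varphi_2)$ does \emph{not} force $\varphi_2=0$; it only forces divisibility by $\phi_{-1,1/2}(\tau,z+w)$, which is in fact what you want, so the conclusion should be restated (as written it is false: $\phi_{-1,1/2}(\tau,z+w)$ times a suitable form is a nonzero element with vanishing pullback). Second, when $b=0$ your final quotient $\varphi_3$ would carry the index $\begin{psmallmatrix} 2a-2 & -1 \\ -1 & 2c-2\end{psmallmatrix}$ with negative off-diagonal, which lies outside the range of the statement and of your inductive hypothesis, and cannot be brought back by an anharmonic substitution without destroying membership in $\Phi_{-2}\mathcal{J}_0+\Phi_0\mathcal{J}_0$; this is another reason the $b=0$ branch must be treated separately, as the paper does, rather than fed back into the generic step. (Minor: $\Phi_{-2,\begin{psmallmatrix} 2 & 1 \\ 1 & 2\end{psmallmatrix}}(\tau,z,-z)=2\phi_{-2,1}(\tau,z)$, not $\phi_{-2,1}(\tau,z)$.)
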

\begin{proof} Let $\varphi \in J^w_{2k, \begin{psmallmatrix} 2a & 2b+1 \\ 2b + 1 & 2c \end{psmallmatrix}}$ be a weak Jacobi form. Write $$\mathcal{P} \varphi(\tau, z) = \varphi(\tau, z, -z) \in J_{2k, a + c - 2b - 1}^w$$ in the form $$\mathcal{P}\varphi = \phi_{-2, 1} u + \phi_{0, 1}v, \; \text{where} \; u, v \in \mathbb{C}[\phi_{-2, 1}, \phi_{0, 1}].$$ (Note that the index $a + c - 2b - 1$ is strictly positive.)  Using Lemma \ref{lem:surj} we find weak Jacobi forms $F$ and $G$ of index $\begin{psmallmatrix} 2a - 2 & 2b \\ 2b & 2c - 2 \end{psmallmatrix}$ with $\mathcal{P}(F) = u$ and $\mathcal{P}(G) = v$. Moreover, $$\mathcal{P}\Big(C_1 \cdot \Phi_{-2, \begin{psmallmatrix} 2 & 1 \\ 1 & 2 \end{psmallmatrix}}\Big) = \phi_{-2, 1} \; \text{and} \; \mathcal{P}\Big(C_2 \cdot \Phi_{0, \begin{psmallmatrix} 2 & 1 \\ 1 & 2 \end{psmallmatrix}}\Big) = \phi_{0, 1}$$ for some nonzero constants $C_1, C_2$. Then $$\varphi_1 := \Big( \varphi - C_1 \Phi_{-2, \begin{psmallmatrix} 2 & 1 \\ 1 & 2 \end{psmallmatrix}} F - C_2 \Phi_{0, \begin{psmallmatrix} 2 & 1 \\ 1 & 2 \end{psmallmatrix}} G \Big) / \phi_{-1, 1/2}(\tau, z+w)$$ is a weak Jacobi form of odd weight $2k+1$ and index $\begin{psmallmatrix} 2a - 1 & 2b \\ 2b & 2c - 1 \end{psmallmatrix}$. We will now show by induction on $b$ that $\varphi$ must have a representation of the form $f \Phi_{-2, \begin{psmallmatrix} 2 & 1 \\ 1 & 2 \end{psmallmatrix}} + g\Phi_{0, \begin{psmallmatrix} 2 & 1 \\ 1 & 2 \end{psmallmatrix}}$ with $f, g \in \mathcal{J}_0$. 

\bigskip

(i) Suppose $b = 0$. Then the form $\varphi_1$ constructed above is a weak Jacobi form of diagonal index.  By Theorem \ref{thm:diag}, 
\begin{align*} 
J_{2k - 1, \begin{psmallmatrix} 2a + 1 & 0 \\ 0 & 2c + 1 \end{psmallmatrix}}^w &= \phi_{-1, 1/2}(\tau, z) \phi_{0, 3/2}(\tau, w) J^w_{2k, \begin{psmallmatrix} 2a & 0 \\ 0 & 2c - 2 \end{psmallmatrix}} + \phi_{0, 3/2}(\tau, z) \phi_{-1, 1/2}(\tau, w)J^w_{2k, \begin{psmallmatrix} 2a-2 & 0 \\ 0 & 2c \end{psmallmatrix}} \\ &\subseteq \mathcal{J}_0 \cdot  \phi_{-1, 1/2}(\tau, z) \phi_{0, 3/2}(\tau, w) + \mathcal{J}_0 \cdot \phi_{0, 3/2}(\tau, z) \phi_{-1, 1/2}(\tau, w). 
\end{align*}

Using the linear relations 
$$\phi_{-1, 1/2}(\tau, z) \phi_{0, 3/2}(\tau, w) \phi_{-1, 1/2}(\tau, z+w) \in \mathrm{Span} \Big( \Phi_{-2, \begin{psmallmatrix} 2 & 1 \\ 1 & 2 \end{psmallmatrix}}(\tau, z, w)\phi_{0,1}(\tau, w), \; \Phi_{0, \begin{psmallmatrix} 2 & 1 \\ 1 & 2 \end{psmallmatrix}} \phi_{-2, 1}(\tau, w) \Big)$$ and $$\phi_{0, 3/2}(\tau, z) \phi_{-1, 1/2}(\tau, w) \phi_{-1, 1/2}(\tau, z+w) \in \mathrm{Span} \Big( \Phi_{-2, \begin{psmallmatrix} 2 & 1 \\ 1 & 2 \end{psmallmatrix}}(\tau, z, w) \phi_{0, 1}(\tau, z), \; \Phi_{0, \begin{psmallmatrix} 2 & 1 \\ 1 & 2 \end{psmallmatrix}} \phi_{-2, 1}(\tau, z) \Big)$$ (which can be proved by showing that the space of weight $-2$ weak Jacobi forms of index $\begin{psmallmatrix} 2 & 1 \\ 1 & 4 \end{psmallmatrix}$ and $\begin{psmallmatrix} 4 & 1 \\ 1 & 2 \end{psmallmatrix}$ are two-dimensional and comparing Fourier coefficients) we obtain the desired representation for $\varphi$. 

\bigskip

(ii) Suppose $0 < b < \min(a, c) - 1$. In this case, the pullback $\mathcal{P}(\varphi_1)$ of the above $\varphi_1$ is again a weak Jacobi form of odd weight and integral index and therefore a multiple of $\phi_{-1, 2}$. By Lemma \ref{lem:surj} and its proof, we can find a weak Jacobi form $G_1\in \Phi_{-2, \begin{psmallmatrix} 2 & 1 \\ 1 & 2 \end{psmallmatrix}}\mathcal{J}_0+\Phi_{0, \begin{psmallmatrix} 2 & 1 \\ 1 & 2 \end{psmallmatrix}}\mathcal{J}_0$ of index $\begin{psmallmatrix} 2a - 2 & 2b + 1 \\ 2b + 1 & 2c - 2 \end{psmallmatrix}$ such that $$\mathcal{P}(G_1) = \mathcal{P}(\varphi_1) / \phi_{-1, 2}.$$ (Note that the condition of Lemma \ref{lem:surj} is satisfied because $b < \min(a, c) - 1$.) Then we obtain the weak Jacobi form $$\varphi_2 := \Big( \varphi_1(\tau, z, w) - \phi_{-1, 1/2}(\tau, z-w) G_1(\tau, z, w) \Big) / \phi_{-1, 1/2}(\tau, z+w) \in J^w_{2k + 2, \begin{psmallmatrix} 2a - 2 & 2b - 1 \\ 2b - 1 & 2c - 2 \end{psmallmatrix}},$$ and the claim follows by induction using the identity for $\phi_{-1, 1/2}(\tau, z+w)\phi_{-1, 1/2}(\tau, z-w)$ in the proof of Theorem \ref{thm:even}. 

\bigskip

(iii) Suppose $b = \min(a, c) - 1$; by swapping $z$ and $w$ if necessary, we may assume $b = a - 1$ and the index matrix of $\varphi$ has the form $\begin{psmallmatrix} 2a & 2a -1 \\ 2a-1 & 2c \end{psmallmatrix}$. Then $\varphi(\tau, z+w, -w)$ has index $\begin{psmallmatrix} 2a & 1 \\ 1 & 2(c-a+1) \end{psmallmatrix}$ and we obtain an expression for it using (i). The claim follows because the weak Jacobi forms of index $\begin{psmallmatrix} 2 & 1 \\ 1 & 2 \end{psmallmatrix}$ are invariant under $(z, w) \mapsto (z+w, -w)$.
\end{proof}

\section{The full ring of weak Jacobi forms}\label{sec:general}

\begin{lemma}\label{lem:surj2} Let $M = \begin{psmallmatrix} 2a & b \\ b & 2c \end{psmallmatrix}$ be an even index matrix. Suppose either that $k$ is even, or $k$ is odd and $b < 2c$. Then the map $$\mathcal{Q}: J_{k, M}^w \longrightarrow J_{k, a}^w, \; \; \varphi(\tau, z, w) \mapsto \varphi(\tau, z, 0)$$ is surjective. 
\end{lemma}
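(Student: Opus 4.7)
The plan is to argue in the spirit of Lemma~\ref{lem:surj}. A repeatedly useful observation is the identity
\[ \mathcal{Q}(\phi_{0,1}(\tau, w) \cdot F) \;=\; \phi_{0,1}(\tau, 0)\,\mathcal{Q}(F) \;=\; 12\,\mathcal{Q}(F), \]
which lets me reduce $c$ by $1$ whenever $\begin{psmallmatrix}2a & b \\ b & 2c-2\end{psmallmatrix}$ is still a valid even index matrix; this will cover base cases in the odd-weight argument.

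For $k$ even I would reduce directly to Lemma~\ref{lem:surj} via the change of variables $\tilde\varphi(\tau, z, w) := \varphi(\tau, z, -z-w)$. This induces an isomorphism $J^w_{k, M} \cong J^w_{k, \tilde M}$ with $\tilde M = \begin{psmallmatrix} 2a - 2b + 2c & 2c-b \\ 2c-b & 2c \end{psmallmatrix}$ (a valid index matrix, with decomposition $(\tilde\alpha, \tilde\beta, \tilde\gamma) = (\alpha, \gamma, \beta)$), and it intertwines $\mathcal{Q}(\varphi) = \tilde\varphi(\tau, z, -z) = \mathcal{P}(\tilde\varphi)$. The hypotheses of Lemma~\ref{lem:surj} for $(\tilde M, k)$ then hold: the parity $\tilde a + \tilde c = 2(a - b + 2c)$ is even; the off-diagonal $\tilde b = 2c - b$ has the same parity as $b$; and if $\tilde b$ is odd then $\tilde b < \min(\tilde a, \tilde c)$ because $b > 0$ and $b < 2a$ (the latter from $b \leq 2a$ together with $b$ odd and $2a$ even). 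The three exceptional triples of Lemma~\ref{lem:surj} cannot occur, since $\tilde c = 2c$ is always even.

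For $k$ odd with $b < 2c$, I first note that $J^w_{k, a} = 0$ unless $a \geq 2$. Assuming $a \geq 2$, the Eichler--Zagier theorem factors $\psi = \phi_{-1,2}(\tau, z) \cdot g(\tau, z)$ with $g \in J^w_{k+1, a-2}$ of even weight $k+1$. If $b \geq 4$, the matrix $M - \begin{psmallmatrix}4&4\\4&4\end{psmallmatrix}$ is a valid even index matrix, so the already-proved even-weight case of this lemma produces $G$ with $\mathcal{Q}(G) = g$, and $\varphi := \phi_{-1,2}(\tau, z+w) \cdot G(\tau, z, w)$ is a lift of $\psi$. If instead $2a - 4 \geq b$, the same strategy works with $\phi_{-1,2}(\tau, z)$ in place of $\phi_{-1,2}(\tau, z+w)$ and $M'' = M - \begin{psmallmatrix}4&0\\0&0\end{psmallmatrix}$. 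The cases covered by neither, namely $b \leq 3$ with $2a - 4 < b$ and $a \geq 2$, form the finite list $(a, b) \in \{(2,1),(2,2),(2,3),(3,3)\}$; after reducing $c$ to its minimum using $\phi_{0,1}(\tau, w)$, I would treat each by exhibiting an explicit weight $-1$ weak Jacobi form of the given matrix index whose $\mathcal{Q}$-pullback is a nonzero multiple of $\phi_{-1,2}(\tau, z)$, built from products of $\phi_{-1,1/2}$, $\phi_{0,3/2}$, Eisenstein series, and the rank-two forms $\Phi_{*,*}$ of Section~\ref{sec:construction} (possibly combined with a Serre derivative).

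The main obstacle will be this last step. Existence of weight $-1$ generators of the four exceptional modules can be confirmed from the Hilbert series formula of Theorem~\ref{th:main3}, but the harder task is to verify that at least one such generator does not lie in the kernel $\phi_{-1, 1/2}(\tau, w) \cdot J^w_{*, M - \begin{psmallmatrix}0 & 0 \\ 0 & 1\end{psmallmatrix}}$ of $\mathcal{Q}$, which is what has to be checked case by case.
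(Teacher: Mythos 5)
Your even-weight argument is correct and is a genuinely different (and arguably cleaner) route than the paper's: the substitution $(z,w)\mapsto(z,-z-w)$ does lie in the anharmonic group, carries $J^w_{k,M}$ isomorphically onto $J^w_{k,\tilde M}$ with $\tilde M=\begin{psmallmatrix} 2a-2b+2c & 2c-b \\ 2c-b & 2c \end{psmallmatrix}$ as you computed, and intertwines $\mathcal{Q}$ with the pullback $\mathcal{P}$ of Lemma \ref{lem:surj}; your verification of that lemma's hypotheses ($\tilde a+\tilde c$ even, and $\tilde b<\min(\tilde a,\tilde c)$ when $\tilde b$ is odd, using $0<b<2a$) is sound. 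The paper instead handles even weight by induction on $b$, peeling off $\Phi_{0,\begin{psmallmatrix}2&1\\1&2\end{psmallmatrix}}$ and $\Phi_{-2,\begin{psmallmatrix}2&1\\1&2\end{psmallmatrix}}$, so your reduction buys a shorter argument at the cost of leaning on Lemma \ref{lem:surj}.

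The odd-weight case, however, is not finished, and what you leave open is precisely the substantive content of the lemma there. Your reduction to the finite list $(a,b)\in\{(2,1),(2,2),(2,3),(3,3)\}$ is fine, but for those indices you only promise to exhibit a suitable weight $-1$ form, and you correctly note that the existence of a weight $-1$ generator (read off from the Hilbert series) does not show it survives evaluation at $w=0$. This cannot be waved away: for the nearby index $\begin{psmallmatrix}4&2\\2&2\end{psmallmatrix}$, excluded only by the hypothesis $b<2c$, the paper observes that no lift of $\phi_{-1,2}$ exists, so an explicit computation is unavoidable. The paper supplies exactly the missing object for the key case: $\Phi_{0,\begin{psmallmatrix}3&2\\2&3\end{psmallmatrix}}(\tau,z+w,-w)\,\phi_{-1,1/2}(\tau,z)$ has weight $-1$ and index $\begin{psmallmatrix}4&1\\1&2\end{psmallmatrix}$, and its value at $w=0$ is $6\,\phi_{0,3/2}(\tau,z)\phi_{-1,1/2}(\tau,z)=6\,\phi_{-1,2}(\tau,z)$; this is then fed into an induction on $b$. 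Your remaining cases can be settled in the same spirit, e.g.\ $\phi_{-1,1/2}(\tau,2z+w)\phi_{0,3/2}(\tau,w)$ for index $\begin{psmallmatrix}4&2\\2&4\end{psmallmatrix}$ and $\phi_{-1,1/2}(\tau,z+w)\Phi_{0,\begin{psmallmatrix}3&2\\2&3\end{psmallmatrix}}(\tau,z,w)$ for index $\begin{psmallmatrix}4&3\\3&4\end{psmallmatrix}$, both restricting to nonzero multiples of $\phi_{-1,2}(\tau,z)$, with the $(3,3)$ family obtained from the latter by further multiplying by $\phi_{-2,1}(\tau,z)$ and $\phi_{0,1}(\tau,z)$ so as to exhaust $J^w_{k,3}=\phi_{-1,2}\cdot J^w_{k+1,1}$. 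Until such verifications are written down, the proof is incomplete.
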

The condition on $b$ in odd weights cannot be removed in general; for example there is no weight $-1$ weak Jacobi form of index $\begin{psmallmatrix} 4 & 2 \\ 2 & 2 \end{psmallmatrix}$ that could map to $\phi_{-1, 2}$.
\begin{proof} (i) Suppose $b = 0$. Then any weak Jacobi form $f \in J_{k, a}^w$ arises as the image of the Jacobi form $f(\tau, z) (\phi_{0, 1}(\tau, w) / 12)^c$ of index $\begin{psmallmatrix} 2a & 0 \\ 0 & 2c \end{psmallmatrix}.$ 

\smallskip

(ii) Suppose $b = \min(2a, 2c)$. If $b = 2a$ then we can pass from the Jacobi form $\varphi$ to the Jacobi form $\varphi(\tau, z+w, -w)$ of index $\begin{psmallmatrix} 2a & 0 \\ 0 & 2(c - a) \end{psmallmatrix}$, preserving the value at $w = 0$, and apply (i). If $b = 2c$ and by assumption $k$ is even, then $J_{k, a}^w$ is spanned by monomials in $\phi_{0, 1}$ and $\phi_{-2, 1}$, and these arise as images of monomials in $$\phi_{0,1}(\tau, z+w), \; \phi_{-2, 1}(\tau, z+w), \; \phi_{0, 1}(\tau, z), \; \phi_{-2, 1}(\tau, z).$$

\smallskip

(iii) Suppose $0 < b < \min(2a, 2c)$. If $k$ is even then $$J_{k, a}^w = \phi_{0, 1} \cdot J_{k, a-1}^w + \phi_{-2, 1} \cdot J_{k+2, a-1}^w,$$ and by an induction on $b$ it is enough to find Jacobi forms of index $\begin{psmallmatrix} 2 & 1 \\ 1 & 2 \end{psmallmatrix}$  whose images are (a nonzero multiple of) $\phi_{0, 1}$ and $\phi_{-2, 1}$. We have seen that $\Phi_{0, \begin{psmallmatrix} 2 & 1 \\ 1 & 2 \end{psmallmatrix}}$ and $\Phi_{-2, \begin{psmallmatrix} 2 & 1 \\ 1 & 2 \end{psmallmatrix}}$ have this property. If $k$ is odd then $$J_{k, a}^w = \phi_{-1, 2} \cdot J_{k+1, a-2}^w$$ and we need to find a Jacobi form of index $\begin{psmallmatrix} 4 & 1 \\ 1 & 2 \end{psmallmatrix}$ whose image is a nonzero multiple of $\phi_{-1, 2}$. But $\Phi_{0, \begin{psmallmatrix} 3 & 2 \\ 2 & 3 \end{psmallmatrix}}(\tau, z+w, -w) \phi_{-1, 1/2}(\tau, z)$ has this property.
\end{proof}

We will combine the following technical lemmas to obtain a system of generators for the algebra of weak Jacobi forms of arbitrary rank and index. 

\begin{lemma}\label{lem:even} Let $M = \begin{psmallmatrix} 2a & b \\ b & 2c \end{psmallmatrix}$ be an even index matrix with $2a \ge 2c > b > 0$ and $M \ne \begin{psmallmatrix} 2 & 1 \\ 1 & 2 \end{psmallmatrix}$. 
\begin{itemize}
\item[(i)] If $b \le 2a - 3$, then for any $k \in \mathbb{Z}$, $$J_{2k - 1, \begin{psmallmatrix} 2a & b \\ b & 2c \end{psmallmatrix}}^w = \Phi_{0, \begin{psmallmatrix} 3 & 2 \\ 2 & 3 \end{psmallmatrix}}(\tau, z+w, -w) \phi_{-1, 1/2}(\tau, z) \cdot J_{2k, \begin{psmallmatrix}2a - 4 & b -1 \\ b-1  & 2c-2 \end{psmallmatrix}}^w + \phi_{-1, 1/2}(\tau, w) \cdot J_{2k, \begin{psmallmatrix} 2a & b \\ b & 2c - 1 \end{psmallmatrix}}^w.$$
\item[(ii)] If $b = 2a - 2$ (and necessarily $a = c$) then $$J_{2k - 1, \begin{psmallmatrix} 2a & 2a - 2 \\ 2a - 2 & 2a \end{psmallmatrix}}^w = \phi_{-1, 1/2}(\tau, 2z+w)\phi_{0,3/2}(\tau,w) \cdot J_{2k, \begin{psmallmatrix} 2a-4 & 2a-4 \\ 2a-4 & 2a-4 \end{psmallmatrix}}^w + \phi_{-1, 1/2}(\tau, z+w) \cdot J_{2k, \begin{psmallmatrix}2a - 1 & 2a - 3 \\ 2a - 3 & 2a - 1 \end{psmallmatrix}}^w.$$
\item[(iii)] If $b = 2a - 1$ then $$J_{2k - 1, \begin{psmallmatrix} 2a & 2a - 1 \\ 2a - 1 & 2a \end{psmallmatrix}}^w = \phi_{-1, 1/2}(\tau, z+w) \cdot J_{2k, \begin{psmallmatrix} 2a-1 & 2a - 2\\ 2a-2 & 2a - 1 \end{psmallmatrix}}^w.$$
\end{itemize}
\end{lemma}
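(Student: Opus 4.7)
The plan is to handle all three cases by a common recipe: pull $\varphi \in J^w_{2k-1, M}$ back to a rank-one slice on which the structure of weak Jacobi forms is controlled by the Eichler--Zagier theorem, exhibit an explicit building block in the first summand whose pullback matches that of $\varphi$, and then divide the difference by the theta factor whose vanishing locus is the chosen slice.

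For (i) I would apply the map $\mathcal{Q}$ from Lemma \ref{lem:surj2}, giving $\mathcal{Q}(\varphi) = \varphi(\tau, z, 0) \in J^w_{2k-1, a}$. Since any odd-weight, integer-index weak Jacobi form is a multiple of $\phi_{-1, 2}$ (which has index $2$, forcing $a \ge 2$), write $\mathcal{Q}(\varphi) = \phi_{-1, 2}(\tau, z) u(\tau, z)$ with $u \in J^w_{2k, a-2}$. The hypotheses $b \le 2a - 3$ and $b < 2c$ (hence $b \le 2c - 1$) guarantee that $\begin{psmallmatrix}2a-4 & b-1 \\ b-1 & 2c-2\end{psmallmatrix}$ is a legitimate even index matrix, so a second application of Lemma \ref{lem:surj2} in the even weight $2k$ produces $F$ of that index with $F(\tau, z, 0) = u(\tau, z)/6$. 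Invoking the identities $\Phi_{0, \begin{psmallmatrix}3 & 2 \\ 2 & 3\end{psmallmatrix}}(\tau, z, 0) = 6\phi_{0, 3/2}(\tau, z)$ and $\phi_{-1, 1/2} \phi_{0, 3/2} = \phi_{-1, 2}$, the difference $\varphi - \Phi_{0, \begin{psmallmatrix}3 & 2 \\ 2 & 3\end{psmallmatrix}}(\tau, z+w, -w) \phi_{-1, 1/2}(\tau, z) F$ vanishes at $w = 0$ and hence, by the Jacobi transformation law in $w$, at every $w \in \mathbb{Z} + \tau\mathbb{Z}$; so it is divisible by $\phi_{-1, 1/2}(\tau, w)$, with quotient in the second summand.

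For (ii) and (iii) I would instead use the pullback $\mathcal{P}(\varphi)(\tau, z) := \varphi(\tau, z, -z)$, which lands in $J^w_{2k-1, A}$ with $A = \tfrac{1}{2}(1, -1) M (1, -1)^T$ equal to $2$ in (ii) and $1$ in (iii). In (iii) the target is zero, since an odd-weight, integer-index weak Jacobi form is a multiple of $\phi_{-1, 2}$ and hence has index at least $2$; thus $\varphi$ vanishes on $w = -z$ and is directly divisible by $\phi_{-1, 1/2}(\tau, z+w)$. In (ii), $\mathcal{P}(\varphi) = \phi_{-1, 2}(\tau, z) g(\tau)$ for some $g \in M_{2k}(\mathrm{SL}_2(\mathbb{Z}))$; I would take $F := g(\tau) (\phi_{0, 1}(\tau, z+w)/12)^{a-2} \in J^w_{2k, \begin{psmallmatrix}2a-4 & 2a-4 \\ 2a-4 & 2a-4\end{psmallmatrix}}$, which satisfies $F(\tau, z, -z) = g(\tau)$. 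Since $\phi_{0, 3/2}(\tau, -z) = \phi_{0, 3/2}(\tau, z)$, the product $\phi_{-1, 1/2}(\tau, 2z+w) \phi_{0, 3/2}(\tau, w) F$ restricts to $\phi_{-1, 1/2}(\tau, z)\phi_{0, 3/2}(\tau, z) g(\tau) = \mathcal{P}(\varphi)$ on $w = -z$, so the difference is again divisible by $\phi_{-1, 1/2}(\tau, z+w)$.

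The main obstacle is not any deep argument but careful bookkeeping: verifying that every $F$ produced lies in a space $J^w_{*, M'}$ for a legitimate nonnegative-entry even index matrix $M'$, confirming the precise hypotheses needed to apply Lemma \ref{lem:surj2}, and double-checking each of the pullback identities for the rank-two building blocks constructed in \S\ref{sec:construction}. The three-way split by the value of $b$ relative to $2a$ is exactly what these compatibility constraints force on us, which is why (ii) and (iii) require ad hoc building blocks rather than the form $\Phi_{0,\begin{psmallmatrix}3 & 2 \\ 2 & 3\end{psmallmatrix}}$ used in (i).
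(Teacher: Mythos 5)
Your proposal is correct and follows essentially the same route as the paper: for (i) restrict to $w=0$, factor out $\phi_{-1,2}$, lift the quotient via Lemma \ref{lem:surj2}, and divide the difference by $\phi_{-1,1/2}(\tau,w)$; for (ii) and (iii) restrict to $w=-z$, match the index-$2$ (resp.\ index-$1$, hence zero) pullback with the same explicit product $f(\tau)(\phi_{0,1}(\tau,z+w)/12)^{a-2}\phi_{-1,1/2}(\tau,2z+w)\phi_{0,3/2}(\tau,w)$, and divide by $\phi_{-1,1/2}(\tau,z+w)$. The only differences are cosmetic (you track the constant $6$ explicitly where the paper writes an unspecified $C$).
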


\begin{proof}  Let $\varphi$ be a weak Jacobi form of index $\begin{psmallmatrix} 2a & b \\ b & 2c \end{psmallmatrix}$ and odd weight $2k+1$. 

\smallskip

(i) Suppose first that $2a \ge b + 3$. Then $\varphi(\tau, z, 0)$ is a weak Jacobi form of index $a$ and odd weight and therefore a multiple of $\phi_{-1, 2}(\tau, z)$. By assumption, $2a \ge 4$, so using Lemma \ref{lem:surj2} we can find a weak Jacobi form $F$ of weight $2k+2$ and index $\begin{psmallmatrix} 2a - 4 & b - 1 \\ b - 1 & 2c - 2 \end{psmallmatrix}$ with $$F(\tau, z, 0) = \varphi(\tau, z, 0) / \phi_{-1, 2}(\tau, z).$$ Then $$\tilde \varphi(\tau, z, w) := \varphi(\tau, z, w) - C \cdot \Phi_{0, \begin{psmallmatrix} 3 & 2 \\ 2 & 3 \end{psmallmatrix}}(\tau, z+w, -w) \phi_{-1, 1/2}(\tau, z) F(\tau, z, w)$$ (for an appropriate constant $C$) satisfies $$\tilde \varphi(\tau, z, 0) = 0,$$ so $G := \tilde \varphi(\tau, z, w) / \phi_{-1, 1/2}(\tau, w)$ is a weak Jacobi form and \[ \varphi =  \Phi_{0, \begin{psmallmatrix} 3 & 2 \\ 2 & 3 \end{psmallmatrix}}(\tau, z+w, -w) \phi_{-1, 1/2}(\tau, z) \cdot (C \cdot F) + \phi_{-1, 1/2}(\tau, w) \cdot G.\]

\smallskip

(ii) Suppose $b = 2a - 2$. Then $\mathcal{P}(\varphi) = \varphi(\tau, z, -z)$ is a weak Jacobi form of odd weight and index $2$, so it has the form $f(\tau) \phi_{-1, 2}(\tau, z)$ with $f \in \mathbb{C}[E_4, E_6]$. Since $b>0$, we have $a\geq 2$. Thus the form $$F(\tau, z, w) := f(\tau) (\phi_{0, 1}(\tau, z+w) / 12)^{a - 2} \phi_{-1, 1/2}(\tau, 2z+w)\phi_{0,3/2}(\tau,w) $$ and satisfies $\mathcal{P}(F) = \mathcal{P}(\varphi)$, so $F - \varphi$ is a multiple of $\phi_{-1, 1/2}(\tau, z+w)$. 

\smallskip

(iii) Suppose $b = 2a -1$. In this case $\mathcal{P}(\varphi)$ is a weak Jacobi form of odd weight and index $1$, so it is identically zero. Therefore $\varphi$ is divisible by $\phi_{-1, 1/2}(\tau, z+w)$.
\end{proof}

\begin{lemma}\label{lem:mixed} Let $\begin{psmallmatrix} 2a + 1 & b \\ b & 2c \end{psmallmatrix}$ be an index matrix with $0 < b < \min(2c, 2a+1)$. 
\begin{itemize}
\item[(i)] For any $k \in \mathbb{Z}$, $$J_{2k-1, \begin{psmallmatrix} 2a + 1 & b \\ b & 2c \end{psmallmatrix}}^w = \phi_{-1, 1/2}(\tau, z) \cdot J_{2k, \begin{psmallmatrix} 2a & b \\ b & 2c \end{psmallmatrix}}^w + \phi_{-1, 1/2}(\tau, w)  \cdot J_{2k, \begin{psmallmatrix} 2a+1 & b \\ b & 2c - 1 \end{psmallmatrix}}^w.$$

\item[(ii)] Suppose $b \le 2a - 2$. For any $k \in \mathbb{Z}$, 
$$J_{2k, \begin{psmallmatrix} 2a + 1 & b \\ b & 2c \end{psmallmatrix}}^w = \phi_{-1, 1/2}(\tau, w) \cdot J_{2k+1, \begin{psmallmatrix} 2a + 1 & b \\ b & 2c - 1 \end{psmallmatrix}}^w + \phi_{0, 3/2}(\tau, z) \cdot J_{2k, \begin{psmallmatrix} 2a - 2 & b \\ b & 2c \end{psmallmatrix}}^w.$$

\item[(iii)] Suppose $b = 2a - 1$. For any $k \in \mathbb{Z}$, 
\begin{align*} J_{2k, \begin{psmallmatrix} 2a + 1 & 2a-1 \\ 2a-1 & 2c \end{psmallmatrix}}^w &=  \phi_{-1, 1/2}(\tau, w) \phi_{-1, 1/2}(\tau, z+w) \cdot J_{2k + 2, \begin{psmallmatrix} 2a & 2a - 2 \\ 2a - 2 & 2c - 2 \end{psmallmatrix}}^w \\ &\quad + \phi_{-1, 1/2}(\tau, z) \cdot J_{2k + 1, \begin{psmallmatrix} 2a & 2a - 1 \\ 2a - 1 & 2c \end{psmallmatrix}}^w \\ &\quad + \Phi_{0, \begin{psmallmatrix} 3 & 2 \\ 2 & 3 \end{psmallmatrix}}(\tau, z+w, -w) \cdot J_{2k, \begin{psmallmatrix} 2a - 2 & 2a - 2 \\ 2a - 2 & 2c - 2 \end{psmallmatrix}}^w. 
\end{align*}

\item[(iv)] Suppose $b = 2a$. For any $k \in \mathbb{Z}$, 
\begin{align*} 
J_{2k, \begin{psmallmatrix} 2a+1 & 2a \\ 2a & 2c \end{psmallmatrix}}^w &= \phi_{-1, 1/2}(\tau, w) \phi_{-1, 1/2}(\tau, z+w) \cdot J_{2k + 2, \begin{psmallmatrix} 2a & 2a - 1 \\ 2a - 1 & 2c - 2 \end{psmallmatrix}}^w \\ &\quad + \phi_{-1, 1/2}(\tau, z) \cdot J_{2k + 1, \begin{psmallmatrix} 2a & 2a \\ 2a & 2c \end{psmallmatrix}}^w \\ &\quad + \Phi_{0, \begin{psmallmatrix} 3 & 1 \\ 1 & 3 \end{psmallmatrix}}(\tau, z+w, -w) \cdot J_{2k, \begin{psmallmatrix} 2a - 2 & 2a - 2 \\ 2a - 2 & 2c - 4 \end{psmallmatrix}}^w. 
\end{align*} 
\end{itemize}
\end{lemma}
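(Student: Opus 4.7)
The plan is to treat the four parts in turn, using the pullback-and-subtract strategy of Lemma~\ref{lem:even}: evaluate $\varphi$ at $w=0$ (or $z=0$ in (iii) and (iv)), apply either Proposition~\ref{prop:rankone} or the Eichler--Zagier structure theorem to the resulting rank-one form, lift back using Lemma~\ref{lem:surj2} (possibly after swapping $z$ and $w$) together with an auxiliary rank-two Jacobi form from Section~\ref{sec:construction}, subtract this lift from $\varphi$, and divide the remainder by a theta factor.

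For part (i), the restriction $\varphi(\tau, z, 0) \in J_{2k-1, (2a+1)/2}^w$ has odd weight and half-integer index, and the parity constraints coming from Proposition~\ref{prop:rankone} force $\varphi(\tau, z, 0) = \phi_{-1, 1/2}(\tau, z) R(\tau, z)$ for some $R \in J_{2k, a}^w$. Lemma~\ref{lem:surj2} (valid for even target weight $2k$) lifts $R$ to some $\tilde R \in J_{2k, \begin{psmallmatrix} 2a & b \\ b & 2c \end{psmallmatrix}}^w$; then $\varphi - \phi_{-1, 1/2}(\tau, z)\tilde R$ vanishes at $w=0$ and is divisible by $\phi_{-1, 1/2}(\tau, w)$, producing the second summand. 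For part (ii), $\varphi(\tau, z, 0) \in J_{2k, (2a+1)/2}^w$ has even weight, and the same parity analysis now forces $\varphi(\tau, z, 0) = \phi_{0, 3/2}(\tau, z) P(\tau, z)$ for some $P \in J_{2k, a-1}^w$. The hypothesis $b \le 2a - 2$ is exactly what is needed for Lemma~\ref{lem:surj2} to lift $P$ to $\tilde P \in J_{2k, \begin{psmallmatrix} 2a-2 & b \\ b & 2c \end{psmallmatrix}}^w$, and then $\varphi - \phi_{0, 3/2}(\tau, z) \tilde P$ vanishes at $w=0$ and is divisible by $\phi_{-1, 1/2}(\tau, w)$.

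For parts (iii) and (iv), the index $\begin{psmallmatrix} 2a-2 & b \\ b & 2c \end{psmallmatrix}$ with $b \in \{2a-1, 2a\}$ is not accessible by Lemma~\ref{lem:surj2}, so I switch the pullback to $z = 0$. Now $\varphi(\tau, 0, w) \in J_{2k, c}^w$ is a rank-one form of integer index and even weight, and by Eichler--Zagier it admits a decomposition
\[ \varphi(\tau, 0, w) = \phi_{-2, 1}(\tau, w) P_1(\tau, w) + \phi_{0, 1}(\tau, w)^j P_2(\tau, w), \]
with $j = 1$ in (iii) and $j = 2$ in (iv) (the conditions $c \ge 1$ and $c \ge 2$ needed for the relevant subindices to be nonnegative both follow from the hypothesis $b < 2c$). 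Lifting $P_1, P_2$ to $V_1, V_3$ of the prescribed target matrix indices by the $z \leftrightarrow w$-swapped Lemma~\ref{lem:surj2}, I subtract from $\varphi$ the combination
\[ \phi_{-1, 1/2}(\tau, w)\phi_{-1, 1/2}(\tau, z+w) V_1 + \Phi(\tau, z+w, -w) V_3, \]
where $\Phi = \Phi_{0, \begin{psmallmatrix} 3 & 2 \\ 2 & 3 \end{psmallmatrix}}$ in (iii) and $\Phi = \Phi_{0, \begin{psmallmatrix} 3 & 1 \\ 1 & 3 \end{psmallmatrix}}$ in (iv). The identities $\phi_{-1, 1/2}(\tau, w)^2 = \phi_{-2, 1}(\tau, w)$, $\Phi_{0, \begin{psmallmatrix} 3 & 2 \\ 2 & 3 \end{psmallmatrix}}(\tau, w, -w) = \phi_{0, 1}(\tau, w)$, and $\Phi_{0, \begin{psmallmatrix} 3 & 1 \\ 1 & 3 \end{psmallmatrix}}(\tau, w, -w) = \phi_{0, 1}(\tau, w)^2$ from Section~\ref{sec:construction} guarantee that the pullbacks at $z = 0$ match, so the remainder vanishes at $z=0$ and is divisible by $\phi_{-1, 1/2}(\tau, z)$; its quotient gives the middle summand $V_2$ of the decomposition.

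The main technical obstacle is case-by-case bookkeeping: in each subcase one must verify that the target indices for the lifts of $R$, $P$, $P_1$, $P_2$ are nondegenerate and satisfy the hypotheses of Lemma~\ref{lem:surj2} (possibly in its $z \leftrightarrow w$-swapped form), which is exactly where the hypotheses on $b$ in (i)--(iv) enter. The constants multiplying the auxiliary rank-two forms must also be chosen so that the Eichler--Zagier expansion of $\varphi(\tau, 0, w)$ is matched exactly by the pullback at $z=0$ of the subtracted combination.
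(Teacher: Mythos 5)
Your proposal is correct and follows essentially the same route as the paper's own proof: pull back to $w=0$ in (i)--(ii) and to $z=0$ in (iii)--(iv), identify the restriction via the rank-one structure theory, lift the cofactors with Lemma~\ref{lem:surj2} (swapping $z$ and $w$ where needed), subtract the matching combination involving $\Phi_{0, \begin{psmallmatrix} 3 & 2 \\ 2 & 3 \end{psmallmatrix}}$ or $\Phi_{0, \begin{psmallmatrix} 3 & 1 \\ 1 & 3 \end{psmallmatrix}}$, and divide by the appropriate theta factor. The bookkeeping you flag (validity of the target indices and the inequalities $c\ge 1$, $c\ge 2$ coming from $b<2c$) is exactly what the paper checks as well.
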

\begin{proof} Let $\varphi(\tau, z, w)$ be a weak Jacobi form of index $\begin{psmallmatrix} 2a + 1 & b \\ b & 2c \end{psmallmatrix}$. 

\smallskip

(i) Suppose $\varphi$ has odd weight. Then $\varphi(\tau, z, 0)$ is a weak Jacobi form of odd weight and half-integral index, and therefore a multiple of $\phi_{-1, 1/2}$. By Lemma \ref{lem:surj2} we can find a weak Jacobi form $F$ of index $\begin{psmallmatrix} 2a & b \\ b & 2c \end{psmallmatrix}$ with $F(\tau, z, 0) = \varphi(\tau, z, 0) / \phi_{-1, 1/2}(\tau, z)$. Then $$G := (\varphi - F \cdot \phi_{-1, 1/2}(\tau, z)) / \phi_{-1, 1/2}(\tau, w)$$ is a weak Jacobi form of index $\begin{psmallmatrix} 2a + 1 & b \\ b & 2c- 1 \end{psmallmatrix}$, i.e. \[\varphi = \phi_{-1, 1/2}(\tau, z) \cdot F + \phi_{-1, 1/2}(\tau, w) \cdot G. \]

\smallskip

(ii) Since $\varphi(\tau, z, 0)$ has even weight and half-integral index, it is a multiple of $\phi_{0, 3/2}$. By Lemma \ref{lem:surj} we can find a weak Jacobi form $F$ of index $\begin{psmallmatrix} 2a - 2 & b \\ b & 2c \end{psmallmatrix}$ with $$F(\tau, z, 0) = \varphi(\tau, z, 0) / \phi_{0, 3/2}(\tau, z).$$ Then $$G := (\varphi - F \cdot \phi_{0, 3/2}(\tau, z)) / \phi_{-1, 1/2}(\tau, w)$$ is a weak Jacobi form, i.e. $\varphi =  \phi_{0, 3/2}(\tau, z) \cdot F + \phi_{-1, 1/2}(\tau, w) \cdot G.$ 

\smallskip

(iii) The form $\varphi(\tau, 0, w)$ has even weight and integer index $c$, so we can write $$\varphi(\tau, 0, w) = \phi_{0, 1}(\tau, w) f(\tau, w) + \phi_{-2, 1}(\tau, w) g(\tau, w)$$ where $f, g \in \mathbb{C}[E_4, E_6, \phi_{-2, 1}, \phi_{0, 1}]$. Let $F$ and $G$ be weak Jacobi forms of index $\begin{psmallmatrix} 2a - 2 & 2a - 2 \\ 2a - 2 & 2c - 2 \end{psmallmatrix}$ and $\begin{psmallmatrix} 2a & 2a - 2 \\ 2a - 2 & 2c - 2 \end{psmallmatrix}$, respectively, such that $$F(\tau, 0, w) = f(\tau, w) \; \text{and} \; G(\tau,  0, w) = g(\tau, w).$$ (The existence follows from Lemma \ref{lem:surj2} after swapping the roles of $z$ and $w$.) Then the form $$\tilde \varphi(\tau, z, w) := \varphi(\tau, z, w) - \Phi_{0, \begin{psmallmatrix} 3 & 2 \\ 2 & 3 \end{psmallmatrix}}(\tau, z+w, -w) F(\tau, z, w) - \phi_{-1, 1/2}(\tau, w) \phi_{-1, 1/2}(\tau, z+w) G(\tau, z, w)$$ vanishes at $z = 0$, so $\tilde \varphi(\tau, z, w) / \phi_{-1, 1/2}(\tau, z)$ is a weak Jacobi form of index $\begin{psmallmatrix} 2a & 2a - 1 \\ 2a - 1 & 2c \end{psmallmatrix}$, and the claim follows. 

\smallskip

(iv) As in (iii) we consider the form $\varphi(\tau, 0, w)$ of index $c$. Here $c > a \ge 1$, so we can write $$\varphi(\tau, 0, w) = \phi_{0, 1}(\tau, w)^2 f(\tau, w) + \phi_{-2, 1}(\tau, w) g(\tau, w)$$ with $f,g \in \mathbb{C}[E_4, E_6, \phi_{-2, 1}, \phi_{0, 1}]$. Using Lemma \ref{lem:surj2} we can find weak Jacobi forms $F$ and $G$ of index $\begin{psmallmatrix} 2a-2 & 2a - 2 \\ 2a - 2 & 2c - 4 \end{psmallmatrix}$ and $\begin{psmallmatrix}2a & 2a - 1 \\ 2a - 1 & 2c - 2 \end{psmallmatrix}$, respectively, such that $$F(\tau, 0, w) = f(\tau, w) \; \text{and} \; G(\tau, 0, w) = g(\tau, w).$$ Then $$\tilde \varphi(\tau, z, w) := \varphi(\tau, z, w) - \Phi_{0, \begin{psmallmatrix} 3 & 1 \\ 1 & 3 \end{psmallmatrix}}(\tau, z+w, -w) F(\tau, z, w) - \phi_{-1, 1/2}(\tau, w) \phi_{-1, 1/2}(\tau, z+w) G(\tau, z, w)$$ vanishes at $z=0$, so $\tilde \varphi(\tau, z, w) / \phi_{-1, 1/2}(\tau, z)$ is a weak Jacobi form of index $\begin{psmallmatrix} 2a & 2a \\ 2a & 2c \end{psmallmatrix}$.
\end{proof}

\begin{lemma}\label{lem:odd} Let $M = \begin{psmallmatrix}2a + 1 & b \\ b & 2c + 1 \end{psmallmatrix}$ be an index matrix with $0 < b < 2c + 1 \le 2a + 1$ and let $k \in \mathbb{Z}$.
\begin{itemize}
\item[(i)] If $b \leq  2a - 2$, then $$J_{2k - 1, M}^w = \phi_{0, 3/2}(\tau, z) \phi_{-1, 1/2}(\tau, w) \cdot J_{2k, \begin{psmallmatrix} 2a - 2 & b \\ b & 2c \end{psmallmatrix}}^w + \phi_{-1, 1/2}(\tau, z+w) \cdot J_{2k, \begin{psmallmatrix} 2a & b - 1 \\ b-1 & 2c \end{psmallmatrix}}^w.$$

\item[(ii)] If $b = 2a - 1$ (so $c = a$), then $$J_{2k-1, M}^w = \Phi_{0, \begin{psmallmatrix} 3 & 2 \\ 2 & 3 \end{psmallmatrix}}(\tau, z+w, -w) \phi_{-1, 1/2}(\tau, w) \cdot J_{2k, \begin{psmallmatrix} 2a - 2 & 2a - 2 \\ 2a - 2 & 2a - 2 \end{psmallmatrix}}^w + \phi_{-1, 1/2}(\tau, z+w) \cdot J_{2k, \begin{psmallmatrix} 2a & 2a-2 \\ 2a-2 & 2a \end{psmallmatrix}}^w.$$

\item[(iii)] If $b = 2a$ (so $c = a$), then $$J_{2k-1, M}^w = \phi_{-1, 1/2}(\tau, z+w) \cdot J_{2k, \begin{psmallmatrix}2a & 2a - 1 \\ 2a-1 & 2a \end{psmallmatrix}}^w.$$
\end{itemize}
\end{lemma}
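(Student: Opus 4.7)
The proof splits into three cases parallel to Lemmas \ref{lem:even} and \ref{lem:mixed}: in each case, with $\varphi \in J_{2k-1, M}^w$ of odd weight, I would analyze the pullback $\mathcal{P}\varphi(\tau, z) := \varphi(\tau, z, -z)$, which lies in $J_{2k-1, a+c+1-b}^w$. The plan is to subtract from $\varphi$ an explicit correction term (built from the first summand of the proposed decomposition) whose pullback matches $\mathcal{P}\varphi$; the residual form then vanishes along $w = -z$, and hence along the full locus $z+w \in \mathbb{Z} + \tau\mathbb{Z}$ by $\mathrm{SL}_2(\mathbb{Z})$- and translation-invariance. It is therefore divisible by $\phi_{-1, 1/2}(\tau, z+w)$, and the quotient is the form called $G$.

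For (i), the pullback has scalar index $a+c+1-b \geq 2$ and odd weight, hence equals $h(\tau, z)\phi_{-1, 2}(\tau, z)$ for some $h \in J_{2k, a+c-b-1}^w$. I would apply Lemma \ref{lem:surj} to lift $-h$ to a weak Jacobi form $F$ of weight $2k$ and index $\begin{psmallmatrix}2a-2 & b \\ b & 2c\end{psmallmatrix}$. The parity condition of Lemma \ref{lem:surj} holds automatically since both $2k$ and $(2a-2)+2c$ are even; when $b$ is odd, the strict inequalities $b < 2a-2$ and $b < 2c$ follow from the hypotheses $b \leq 2a-2$ and $b \leq 2c$ combined with parity. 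Using the oddness of $\vartheta$, the pullback of $\phi_{0, 3/2}(\tau, z) \phi_{-1, 1/2}(\tau, w)$ at $w = -z$ equals $-\phi_{-1, 2}(\tau, z)$, so $\varphi - \phi_{0, 3/2}(\tau, z) \phi_{-1, 1/2}(\tau, w) F$ vanishes at $w = -z$ as required.

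Part (ii) is the main obstacle, because the index $\begin{psmallmatrix}2a-2 & 2a-2 \\ 2a-2 & 2a-2\end{psmallmatrix}$ appearing in the first summand is degenerate and Lemma \ref{lem:surj} does not apply directly. Here the pullback has scalar index $2$ and odd weight, so $\mathcal{P}\varphi = h(\tau)\phi_{-1, 2}(\tau, z)$ with $h \in \mathbb{C}[E_4, E_6]$ of weight $2k$, depending only on $\tau$. This rigidity allows me to construct $F$ explicitly as $F(\tau, z, w) = -\frac{h(\tau)}{6 \cdot 12^{a-1}} \phi_{0, 1}(\tau, z+w)^{a-1}$, which has the required weight and degenerate index and satisfies $F(\tau, z, -z) = -h(\tau)/6$ since $\phi_{0, 1}(\tau, 0) = 12$. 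Combined with the identity $\Phi_{0, \begin{psmallmatrix}3 & 2 \\ 2 & 3\end{psmallmatrix}}(\tau, 0, z) = 6 \phi_{0, 3/2}(\tau, z)$ from Section \ref{sec:construction} and the oddness of $\phi_{-1, 1/2}$, the pullback of $\Phi_{0, \begin{psmallmatrix}3 & 2 \\ 2 & 3\end{psmallmatrix}}(\tau, z+w, -w) \phi_{-1, 1/2}(\tau, w) F$ at $w = -z$ comes out to $h(\tau)\phi_{-1, 2}(\tau, z) = \mathcal{P}\varphi$, permitting the desired cancellation.

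For (iii), with $b = 2a$ and $c = a$, the pullback lies in $J_{2k-1, 1}^w$, which vanishes since the Eichler--Zagier generators $\phi_{-2, 1}$ and $\phi_{0, 1}$ of the index-$1$ ring have even weight. Hence $\varphi$ is directly divisible by $\phi_{-1, 1/2}(\tau, z+w)$, with quotient $G \in J_{2k, \begin{psmallmatrix}2a & 2a-1 \\ 2a-1 & 2a\end{psmallmatrix}}^w$. The principal difficulty is the degenerate-index case in (ii), where Lemma \ref{lem:surj} must be replaced by a direct construction exploiting the fact that the coefficient of $\phi_{-1, 2}$ in $\mathcal{P}\varphi$ is independent of $z$; everything else is bookkeeping analogous to the preceding lemmas.
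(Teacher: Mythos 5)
Your proposal is correct and follows essentially the same route as the paper: pull back along $w=-z$, factor out $\phi_{-1,2}$, lift the cofactor via Lemma \ref{lem:surj} in case (i) (or the explicit $\mathbb{C}[E_4,E_6]\cdot\phi_{0,1}(\tau,z+w)^{a-1}$ construction in case (ii), exactly as the paper does in its parenthetical remark), and divide the residual by $\phi_{-1,1/2}(\tau,z+w)$. Your verification of the parity hypotheses of Lemma \ref{lem:surj} and of the signs coming from the oddness of $\phi_{-1,1/2}$ is in fact slightly more careful than the paper's write-up.
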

\begin{proof}  Let $\varphi \in J_{2k - 1, \begin{psmallmatrix} 2a+1 & b \\ b & 2c+1 \end{psmallmatrix}}^w$. The pullback $$\mathcal{P}\varphi(\tau, z) = \varphi(\tau, z, -z)$$ is a weak Jacobi form of odd weight $2k-1$ and integral index $a-b+c+1$ and therefore a multiple of $\phi_{-1, 2}$. 

\smallskip

(i) In this case, we can apply Lemma \ref{lem:surj} to find a weak Jacobi form $F$ of weight $2k$ and index $\begin{psmallmatrix} 2a - 2& b \\ b & 2c  \end{psmallmatrix}$ with $$F(\tau, z, -z) = \varphi(\tau, z, -z) / \phi_{-1, 2}(\tau, z).$$ Then $$\tilde \varphi(\tau, z, w) := \varphi(\tau, z, w) - \phi_{0, 3/2}(\tau, z) \phi_{-1, 1/2}(\tau, w) F(\tau, z, w)$$ vanishes whenever $z+w \in \mathbb{Z} + \tau \mathbb{Z}$, so it is a multiple of $\phi_{-1, 1/2}(\tau, z+w)$. 

\smallskip

(ii) We argue as in (i) but instead take $F$ to have index $\begin{psmallmatrix} 2a - 2& 2a-2 \\ 2a-2 & 2a - 2 \end{psmallmatrix}$ with $$F(\tau, z, -z) = \varphi(\tau, z, -z) / \phi_{-1, 2}(\tau, z).$$ (This lies in $\mathbb{C}[E_4, E_6]$ as $\varphi(\tau, z, -z)$ has index $2$, so we can take $F \in \mathbb{C}[E_4, E_6] \cdot \phi_{0, 1}(\tau, z+w)^{a-1}$.) Then we consider the form $$\tilde \varphi = \varphi - \Phi_{0, \begin{psmallmatrix} 3 & 2 \\ 2 & 3 \end{psmallmatrix}}(\tau, z+w, -w)\phi_{-1, 1/2}(\tau, w) \cdot F$$ which vanishes whenever $z+w \in \mathbb{Z} + \tau \mathbb{Z}$ so it is a multiple of $\phi_{-1, 1/2}(\tau, z+w)$. 

\smallskip

(iii) In this case $\mathcal{P}\varphi$ is a weak Jacobi form of odd weight and index $1$ and therefore identically zero; so $\varphi$ is already a multiple of $\phi_{-1, 1/2}(\tau, z+w)$.
\end{proof}

\begin{lemma}\label{lem:odd2}
Let $0 < b <\min(2a+1, 2c+1)$.
\begin{itemize}
\item[(i)] If $b \ge 2$, then $$ J_{2k, \begin{psmallmatrix} 2a + 1 & b \\ b & 2c + 1 \end{psmallmatrix}}^w = \phi_{-1, 1/2}(\tau, w) \cdot J_{2k + 1, \begin{psmallmatrix} 2a+1 & b \\ b & 2c \end{psmallmatrix}}^w + \Phi_{0, \begin{psmallmatrix} 3 & 2 \\ 2 & 3 \end{psmallmatrix}} \cdot J_{2k, \begin{psmallmatrix} 2a - 2 & b-2 \\ b - 2 & 2c - 2 \end{psmallmatrix}}^w.$$
\item[(ii)] When $b = 1$, 
\begin{align*} 
J_{2k, \begin{psmallmatrix} 2a+1 & 1 \\ 1 & 2c + 1 \end{psmallmatrix}}^w &= \phi_{-1, 1/2}(\tau, z) \phi_{-1, 1/2}(\tau, w) \cdot J_{2k + 2, \begin{psmallmatrix} 2a & 1 \\ 1 & 2c \end{psmallmatrix}}^w \\ &\quad + \phi_{-1, 1/2}(\tau, z+w) \cdot J_{2k+1, \begin{psmallmatrix} 2a & 0 \\ 0 & 2c \end{psmallmatrix}}^w \\ &\quad + \Phi_{0, \begin{psmallmatrix} 3 & 1 \\ 1 & 3 \end{psmallmatrix}} \cdot J_{2k, \begin{psmallmatrix} 2a - 2 & 0 \\ 0 & 2c-2 \end{psmallmatrix}}^w. 
\end{align*}
\end{itemize}
\end{lemma}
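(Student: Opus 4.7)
The plan is to extend the restriction-and-divide strategy used in Lemmas \ref{lem:even}, \ref{lem:mixed}, and \ref{lem:odd}: I take $\varphi \in J_{2k, M}^w$, subtract a carefully chosen combination of the candidate summands whose restriction to a suitable hyperplane in the elliptic variables matches that of $\varphi$, and then divide the vanishing difference by the coordinate function cutting out the hyperplane.

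For part (i), I would restrict to $w = 0$. Since $\varphi(\tau, z, 0) \in J_{2k, (2a+1)/2}^w$ has even weight and half-integer index, Proposition \ref{prop:rankone} forces it to factor as $\phi_{0,3/2}(\tau, z)\psi(\tau, z)$ for some $\psi \in J_{2k, a-1}^w$. Using the identity $\Phi_{0, \begin{psmallmatrix} 3 & 2 \\ 2 & 3 \end{psmallmatrix}}(\tau, z, 0) = 6\phi_{0, 3/2}(\tau, z)$ from Section \ref{sec:construction}, I would apply Lemma \ref{lem:surj2} to produce $F \in J_{2k, \begin{psmallmatrix} 2a-2 & b-2 \\ b-2 & 2c-2 \end{psmallmatrix}}^w$ with $F(\tau, z, 0) = \psi/6$; this is legitimate because the weight is even and $b - 2 \le \min(2a-2, 2c-2)$ follows from the hypothesis $b < \min(2a+1, 2c+1)$. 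The difference $\varphi - \Phi_{0, \begin{psmallmatrix} 3 & 2 \\ 2 & 3 \end{psmallmatrix}} F$ then vanishes whenever $w \in \mathbb{Z} + \tau\mathbb{Z}$ and so is divisible by $\phi_{-1, 1/2}(\tau, w)$; the quotient is the desired form in $J_{2k+1, \begin{psmallmatrix} 2a+1 & b \\ b & 2c \end{psmallmatrix}}^w$.

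For part (ii), restricting to $w = 0$ alone would leave a residual not absorbed by the expected summands, so I would instead restrict along the line $w = -z$. The pullback $\mathcal{P}\varphi = \varphi(\tau, z, -z) \in J_{2k, a+c}^w$ has even weight and integer index $a + c \ge 2$, so the Eichler--Zagier structure theorem gives a decomposition
$$\varphi(\tau, z, -z) = \phi_{0, 1}(\tau, z)^2 v(\tau, z) + \phi_{-2, 1}(\tau, z) u(\tau, z),$$
with $v$ of weight $2k$ and index $a+c-2$, and $u$ of weight $2k+2$ and index $a+c-1$. The key identities $\Phi_{0, \begin{psmallmatrix} 3 & 1 \\ 1 & 3 \end{psmallmatrix}}(\tau, z, -z) = \phi_{0, 1}(\tau, z)^2$ and $\phi_{-1, 1/2}(\tau, z)\phi_{-1, 1/2}(\tau, -z) = -\phi_{-2, 1}(\tau, z)$ then let me split the two components of this decomposition across two of the three summands in the target. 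Using Lemma \ref{lem:surj} I would find $F \in J_{2k, \begin{psmallmatrix} 2a-2 & 0 \\ 0 & 2c-2 \end{psmallmatrix}}^w$ with $F(\tau, z, -z) = v(\tau, z)$ and $G_1 \in J_{2k+2, \begin{psmallmatrix} 2a & 1 \\ 1 & 2c \end{psmallmatrix}}^w$ with $G_1(\tau, z, -z) = -u(\tau, z)$; the surjectivity conditions hold because the assumption $1 = b < \min(2a+1, 2c+1)$ forces $a, c \ge 1$. The residual $\varphi - \Phi_{0, \begin{psmallmatrix} 3 & 1 \\ 1 & 3 \end{psmallmatrix}} F - \phi_{-1, 1/2}(\tau, z)\phi_{-1, 1/2}(\tau, w) G_1$ then vanishes on the locus $z + w \in \mathbb{Z} + \tau\mathbb{Z}$, so it is divisible by $\phi_{-1, 1/2}(\tau, z+w)$, yielding the final summand $G_2 \in J_{2k+1, \begin{psmallmatrix} 2a & 0 \\ 0 & 2c \end{psmallmatrix}}^w$.

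The main obstacle is the coordinated construction of $F$ and $G_1$ in part (ii): both forms must have prescribed restriction to the line $w = -z$, and their combined contribution has to reproduce the full Eichler--Zagier decomposition of $\varphi(\tau, z, -z)$, not just one piece of it. The whole strategy hinges on the boundary identities for $\Phi_{0, \begin{psmallmatrix} 3 & 1 \\ 1 & 3 \end{psmallmatrix}}$ and $\Phi_{0, \begin{psmallmatrix} 3 & 2 \\ 2 & 3 \end{psmallmatrix}}$ proved in Section \ref{sec:construction}, which were apparently engineered for precisely this decomposition.
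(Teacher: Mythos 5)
Your proposal is correct and follows essentially the same route as the paper: part (i) restricts to $w=0$, factors out $\phi_{0,3/2}$, lifts via Lemma \ref{lem:surj2}, and divides by $\phi_{-1,1/2}(\tau,w)$; part (ii) pulls back along $w=-z$, uses the decomposition $\phi_{0,1}^2 f + \phi_{-2,1}g$ together with the boundary identities for $\Phi_{0,\begin{psmallmatrix}3&1\\1&3\end{psmallmatrix}}$ and $\phi_{-1,1/2}(\tau,z)\phi_{-1,1/2}(\tau,-z)=-\phi_{-2,1}(\tau,z)$, lifts via Lemma \ref{lem:surj}, and divides by $\phi_{-1,1/2}(\tau,z+w)$. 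The only differences are cosmetic (explicit tracking of the constant $6$ and a sign absorbed into $G_1$), and your verification of the hypotheses of the surjectivity lemmas is accurate.
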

\begin{proof} (i) Let $\varphi$ be a weak Jacobi form of even weight $2k$ and index $\begin{psmallmatrix} 2a+1 & b \\ b & 2c + 1 \end{psmallmatrix}.$ Then $\varphi(\tau, z, 0)$ is a weak Jacobi form of half-integral index $a+1/2$ and even weight and therefore a multiple of $\phi_{0, 3/2}$. By Lemma \ref{lem:surj2} we can find a weak Jacobi form $F$ of index $\begin{psmallmatrix} 2a - 2 & b-2 \\ b-2 & 2c-2 \end{psmallmatrix}$ with $$F(\tau, z, 0) = \varphi(\tau, z, 0) / \phi_{0, 3/2}(\tau, z).$$ Let $C$ be the nonzero constant with $C \cdot \Phi_{0, \begin{psmallmatrix} 3 & 2 \\ 2 & 3 \end{psmallmatrix}}(\tau, z, 0) = \phi_{0, 3/2}(\tau, z)$; then $$\varphi_1 := \Big( \varphi - C \cdot \Phi_{0, \begin{psmallmatrix} 3 & 2 \\ 2 & 3 \end{psmallmatrix}} \cdot F \Big) / \phi_{-1, 1/2}(\tau, w)$$ is a weak Jacobi form of weight $2k-1$ and index $\begin{psmallmatrix} 2a +1 & b \\ b & 2c \end{psmallmatrix}$, and the claim follows. 

\smallskip

(ii) The pullback $$\mathcal{P}\varphi(\tau, z) = \varphi(\tau, z, -z) \in J_{2k, a+c}^w$$ is an even-weight Jacobi form of index at least two, so we can write $$\mathcal{P}\varphi = \phi_{0, 1}^2 f + \phi_{-2, 1} g$$ with $f, g \in \mathbb{C}[E_4, E_6, \phi_{-2, 1}, \phi_{0, 1}].$ By Lemma \ref{lem:surj} there exist Jacobi forms $F, G$ of index $\begin{psmallmatrix} 2a - 2 & 0 \\ 0 & 2c - 2 \end{psmallmatrix}$ and $\begin{psmallmatrix} 2a & 1 \\ 1 & 2c \end{psmallmatrix}$, respectively, such that $$\mathcal{P}F = f \; \text{and} \; \mathcal{P}G = g.$$ Then the form $$\tilde \varphi(\tau, z, w) := \varphi(\tau, z, w) + \phi_{-1, 1/2}(\tau, z) \phi_{-1, 1/2}(\tau, w) G(\tau, z, w) - \Phi_{0, \begin{psmallmatrix} 3 & 1 \\ 1 & 3 \end{psmallmatrix}}(\tau, z, w) F(\tau, z, w)$$ vanishes at all points $z+w \in \mathbb{Z} + \tau \mathbb{Z}$, so $$\tilde \varphi(\tau, z, w) / \phi_{-1, 1/2}(\tau, z + w)$$ is a weak Jacobi form, and the claim follows.
\end{proof}

\begin{theorem}\label{thm:gens} The graded algebra $$\mathcal{J} := \bigoplus_{\substack{a, b, c \in \mathbb{N}_0 \\ b \le \min(a, c)}} \bigoplus_{k \in \mathbb{Z}} J^w_{k, \begin{psmallmatrix} a & b \\ b & c \end{psmallmatrix}}$$ is generated by the Eisenstein series $E_4, E_6$, the rank-one weak Jacobi forms $$f(\tau, z), \; f(\tau, w), \; f(\tau, z+w), \; \; f \in \{\phi_{-1, 1/2},\, \phi_{0, 1}, \, \phi_{0, 3/2}\},$$ the $A_2$-index forms $$\Phi_{-2, \begin{psmallmatrix} 2 & 1 \\ 1 & 2 \end{psmallmatrix}}, \; \Phi_{0, \begin{psmallmatrix} 2 & 1 \\ 1 & 2 \end{psmallmatrix}},$$ and by the odd lattice-index forms
\begin{align*}
&\Phi_{0, \begin{psmallmatrix} 3 & 2 \\ 2 & 3 \end{psmallmatrix}}(\tau, z, w),& &\Phi_{0, \begin{psmallmatrix} 3 & 2 \\ 2 & 3 \end{psmallmatrix}}(\tau,z+w,-w),& &\Phi_{0, \begin{psmallmatrix} 3 & 2 \\ 2 & 3 \end{psmallmatrix}}(\tau,z+w,-z),& \\
&\Phi_{0, \begin{psmallmatrix} 3 & 1 \\ 1 & 3 \end{psmallmatrix}}(\tau, z, w),& &\Phi_{0, \begin{psmallmatrix} 3 & 1 \\ 1 & 3 \end{psmallmatrix}}(\tau,z+w,-w),& &\Phi_{0, \begin{psmallmatrix} 3 & 1 \\ 1 & 3 \end{psmallmatrix}}(\tau,z+w,-z).& \\
\end{align*}
\end{theorem}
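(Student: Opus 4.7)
The plan is to prove Theorem \ref{thm:gens} by induction on the ``size'' $A+B+C$ of the index matrix $M = \begin{psmallmatrix} A & B \\ B & C \end{psmallmatrix}$ of a weak Jacobi form $\varphi \in J_{k, M}^w$, with a secondary split on the parity of the weight $k$. The base of the induction is the diagonal case $B=0$, which by Theorem \ref{thm:diag} reduces to a tensor product of rank-one weak Jacobi forms; these in turn lie in the subalgebra generated by $E_4, E_6$ and the listed substitutions of $\phi_{-1,1/2}, \phi_{0,1}, \phi_{0,3/2}$ via Proposition \ref{prop:rankone}. The degenerate indices with $\det(M)=0$ are handled by pulling back to rank one through $\pi^*$, so throughout the induction we may assume $M$ has full rank and $B \geq 1$.

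For the inductive step, we first use the anharmonic $G$-action $(z, w) \mapsto (z+w, -w), (w, z), \ldots$ on the elliptic variables to bring $M$ into a convenient normal form (for instance $A \geq C$ with the parities arranged so that one of the reduction lemmas from \S\ref{sec:general} directly applies), noting that each $G$-translate of a listed generator is again a listed generator. We then split into the following cases according to the parities of $(A, B, C)$ and $k$:
\begin{itemize}
\item $A, B, C$ all even and $k$ even: apply Theorem \ref{thm:even}.
\item $A, C$ even, $B$ odd, $k$ even: apply the theorem immediately following Theorem \ref{thm:even}.
\item $A, C$ even and $k$ odd: apply Lemma \ref{lem:even}.
\item Exactly one of $A, C$ odd: apply Lemma \ref{lem:mixed} in either weight parity.
\item $A, C$ both odd and $k$ even: apply Lemma \ref{lem:odd2}.
\item $A, C$ both odd and $k$ odd: apply Lemma \ref{lem:odd}.
\end{itemize}
In every case the chosen lemma expresses $\varphi$ as a finite sum of products in which one factor is a listed generator --- a rank-one form $\phi_{-1, 1/2}, \phi_{0, 1}, \phi_{0, 3/2}$ in one of the three arguments $z, w, z+w$, or one of the four rank-two building blocks $\Phi_{-2, \begin{psmallmatrix} 2 & 1 \\ 1 & 2\end{psmallmatrix}}, \Phi_{0, \begin{psmallmatrix} 2 & 1 \\ 1 & 2\end{psmallmatrix}}, \Phi_{0, \begin{psmallmatrix} 3 & 2 \\ 2 & 3\end{psmallmatrix}}, \Phi_{0, \begin{psmallmatrix} 3 & 1 \\ 1 & 3\end{psmallmatrix}}$ (or a $G$-translate thereof) --- and the other factor is a weak Jacobi form of strictly smaller index, to which the induction hypothesis applies.

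The main obstacle is the careful bookkeeping of the boundary cases excluded from the hypotheses of the various lemmas: Lemma \ref{lem:even} omits $M = \begin{psmallmatrix} 2 & 1 \\ 1 & 2\end{psmallmatrix}$ and splits into three sub-ranges for $b$ relative to $2a$; Lemmas \ref{lem:mixed}, \ref{lem:odd}, \ref{lem:odd2} each have analogous case distinctions; and the odd-weight, odd-diagonal cases can only be opened up once certain small-index $\Phi$'s are available as multipliers. These exceptional cases must each be either reduced to another case by the $G$-action (after which the new index is smaller or of a parity type that has already been handled) or absorbed directly into the generating set. This is precisely why the list contains the two forms of index $\begin{psmallmatrix} 2 & 1 \\ 1 & 2\end{psmallmatrix}$ together with \emph{all three} $G$-orbit representatives of $\Phi_{0, \begin{psmallmatrix} 3 & 2 \\ 2 & 3\end{psmallmatrix}}$ and of $\Phi_{0, \begin{psmallmatrix} 3 & 1 \\ 1 & 3\end{psmallmatrix}}$: every multiplier produced on the right-hand side of a reduction then appears in the list. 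Verifying that the induction is well-founded --- that $A+B+C$ strictly decreases in every reduction and that no circular dependency arises in the mutual use of the lemmas and the $G$-action --- is the technical heart of the argument.
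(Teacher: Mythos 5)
Your proposal follows essentially the same route as the paper: the base case is the diagonal index via Theorem \ref{thm:diag}, and the inductive step feeds the index into whichever of the reduction lemmas of \S\ref{sec:general} matches the parities of the weight and the diagonal entries. (The paper organizes the induction around the off-diagonal entry $b$ rather than $A+B+C$, but since every term produced by Lemmas \ref{lem:even}--\ref{lem:odd2} has strictly smaller trace-plus-off-diagonal, your choice of induction variable is fine and arguably makes the well-foundedness more transparent.) One concrete assertion in your sketch is false as stated, however: it is not true that ``every multiplier produced on the right-hand side of a reduction appears in the list'' or is a $G$-translate of a listed generator. Lemma \ref{lem:even}(ii) produces the multiplier $\phi_{-1,1/2}(\tau,2z+w)\,\phi_{0,3/2}(\tau,w)$, and the vector $2z+w$ is not in the $G$-orbit of $z$, $w$, $z+w$, so this form is not obtained from the listed generators by any change of variables. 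The paper closes this hole with an explicit linear relation expressing $\phi_{-1,1/2}(\tau,2z+w)\phi_{0,3/2}(\tau,w)$ as a $\CC$-linear combination of $\phi_{-1,1/2}(\tau,z)\Phi_{0,\begin{psmallmatrix} 3 & 1 \\ 1 & 3\end{psmallmatrix}}(\tau,z+w,-w)$, $\phi_{-1,1/2}(\tau,z+w)\Phi_{0,\begin{psmallmatrix} 3 & 1 \\ 1 & 3\end{psmallmatrix}}(\tau,z,w)$ and $E_4\Phi_{-3,\begin{psmallmatrix} 2 & 1 \\ 1 & 2\end{psmallmatrix}}\Phi_{-2,\begin{psmallmatrix} 2 & 1 \\ 1 & 2\end{psmallmatrix}}$ (the last factor $\Phi_{-3}$ being itself a product of three listed $\phi_{-1,1/2}$'s); without some such identity, your induction does not close in the case $b=2a-2$ of odd weight. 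Beyond that, your sketch defers exactly the bookkeeping that constitutes the proof, but the architecture is the paper's.
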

\begin{proof} Let $\varphi(\tau, z, w)$ be a weak Jacobi form of index $\begin{psmallmatrix} a & b \\ b & c \end{psmallmatrix}$. We will prove that $\varphi$ can be expressed as a polynomial in these forms by induction on $b$. Without loss of generality, we may assume that $a \ge c$. (Otherwise, we swap the elliptic variables $z$ and $w$. This preserves the system of generators in the claim.

\smallskip

(i) If $b = 0$, then $\varphi$ has diagonal index and the claim follows from Theorem \ref{thm:diag}.

\smallskip

(ii) Suppose $0 < b < \min(a, c)$. One of the lemmas above (depending on the parity of the weight $k$ and the parity of $a$ and $c$) applies to this index and yields a decomposition of $\varphi$ into weak Jacobi forms with lower off-diagonal index. Note that in Lemma \ref{lem:even}, $\phi_{-1, 1/2}(\tau, 2z+w)\phi_{0,3/2}(\tau,w)$ is a $\CC$-linear combination of the three forms
\begin{align*}
&\phi_{-1,1/2}(\tau,z)\Phi_{0, \begin{psmallmatrix} 3 & 1 \\ 1 & 3 \end{psmallmatrix}}(\tau,z+w,-w),&  &\phi_{-1,1/2}(\tau,z+w)\Phi_{0, \begin{psmallmatrix} 3 & 1 \\ 1 & 3 \end{psmallmatrix}}(\tau,z,w),& \\
&E_4(\tau) \Phi_{-3, \begin{psmallmatrix} 2 & 1 \\ 1 & 2 \end{psmallmatrix}}(\tau,z,w)\Phi_{-2, \begin{psmallmatrix} 2 & 1 \\ 1 & 2 \end{psmallmatrix}}(\tau,z,w).&
\end{align*}
Therefore we do not need $\phi_{-1, 1/2}(\tau, 2z+w)\phi_{0,3/2}(\tau,w)$ as a generator of $\mathcal{J}$. The claim follows by induction.

\smallskip

(iii) Suppose $b = \min(a, c)$. By applying one of the changes of variables $(z, w) \mapsto (z+w, -w)$ or $(z, w) \mapsto (w, -z-w)$ we obtain a weak Jacobi form of off-diagonal index $b = 0$ and we can apply case (i).
\end{proof}

\begin{corollary}\label{cor:ring} Let $L \subseteq \mathbb{R}^2$ be a positive-definite integral lattice. Then the ring $$\mathcal{J}_L := \bigoplus_{n=0}^{\infty} J_{*, L(n)}^w$$ is finitely-generated over $\CC$.
\end{corollary}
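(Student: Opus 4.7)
The plan is to combine Theorem \ref{thm:gens} with Gordan's lemma on finite generation of affine semigroups. First I would fix a Gram matrix for $L$ of the form $M_L = \begin{psmallmatrix} a'+b' & b' \\ b' & c'+b' \end{psmallmatrix}$ with $(a', b', c') \in \mathbb{N}_0^3$; then $L(n)$ is encoded by the index triple $n(a', b', c')$, and $\mathcal{J}_L \subseteq \mathcal{J}$ is the graded subalgebra of $\mathcal{J}$ consisting of forms whose index triple lies on the ray $\mathbb{N}_0 \cdot (a', b', c')$. Theorem \ref{thm:gens} provides finitely many $\mathbb{N}_0^3$-homogeneous generators $F_1, \ldots, F_N$ of $\mathcal{J}$, with respective indices $\mathbf{m}_i = (a_i, b_i, c_i) \in \mathbb{N}_0^3$.

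Because each generator is index-homogeneous and $\mathcal{J}$ splits as a direct sum over $\mathbb{N}_0^3$-degrees, $\mathcal{J}_L$ is spanned as a $\mathbb{C}$-vector space by the monomials $F^{\mathbf{e}} := F_1^{e_1} \cdots F_N^{e_N}$ whose exponent vector $\mathbf{e}$ lies in the submonoid
\[
T \;:=\; \Big\{ \mathbf{e} \in \mathbb{N}_0^N :\ \sum_{i=1}^N e_i \mathbf{m}_i \in \mathbb{N}_0 \cdot (a', b', c') \Big\} \;\subseteq\; \mathbb{N}_0^N.
\]
The key step is to show that $T$ is finitely generated as a monoid. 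I would do this by introducing the auxiliary monoid of lattice points
\[
\widetilde T \;:=\; \Big\{ (\mathbf{e}, n) \in \mathbb{N}_0^{N+1} :\ \sum_{i=1}^N e_i \mathbf{m}_i = n (a', b', c') \Big\},
\]
which is exactly the set of lattice points in the rational polyhedral cone cut out of $\mathbb{R}_{\geq 0}^{N+1}$ by the three linear equations $\sum_i e_i a_i = na'$, $\sum_i e_i b_i = nb'$, $\sum_i e_i c_i = nc'$. Gordan's lemma therefore yields a finite monoid generating set of $\widetilde T$, and projecting to the first $N$ coordinates produces a finite monoid generating set $\mathbf{v}_1, \ldots, \mathbf{v}_r$ of $T$ (using that for positive-definite $L$ the integer $n$ is uniquely determined by $\sum_i e_i \mathbf{m}_i$).

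Finally, every $\mathbf{e} \in T$ is a non-negative integer combination of the $\mathbf{v}_j$, so every monomial in $\mathcal{J}_L$ is a product of the forms $F^{\mathbf{v}_1}, \ldots, F^{\mathbf{v}_r}$; these therefore generate $\mathcal{J}_L$ as a $\mathbb{C}$-algebra. I do not expect any serious obstacles in this plan. The only minor subtlety worth flagging is that when $\gcd(a', b', c') > 1$ the ray $\mathbb{N}_0 \cdot (a', b', c')$ is not saturated inside $\mathbb{N}_0^3$, which is why it is cleaner to introduce the auxiliary variable $n$ than to describe $T$ directly as the preimage of a rational polyhedral cone in $\mathbb{R}^3$.
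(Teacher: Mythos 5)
Your argument is correct, but it takes a genuinely different route from the paper. The paper also reduces, via Theorem \ref{thm:gens}, to monomials in the finitely many index-homogeneous generators, but then proceeds by explicit bookkeeping: since the square of every generator lies in the even subring $\mathcal{J}_0$, which is generated by rank-one forms, one may assume each rank-two generator occurs with exponent at most one; an elementary estimate on the indices then shows that any such monomial of index $L(n)$ with $n \ge 24$ splits off sub-monomials of indices $\begin{psmallmatrix} 6a & 0 \\ 0 & 0 \end{psmallmatrix}$, $\begin{psmallmatrix} 6b & 6b \\ 6b & 6b \end{psmallmatrix}$ and $\begin{psmallmatrix} 0 & 0 \\ 0 & 6c \end{psmallmatrix}$, hence a factor of index $L(6)$, so that the monomials of index $L(n)$ with $n < 24$ suffice. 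You replace this combinatorial step by the general semigroup fact that the preimage in $\mathbb{N}_0^N$ of the ray $\mathbb{N}_0\cdot(a',b',c')$ under the linear degree map is a finitely generated monoid, obtained from Gordan's lemma applied to the auxiliary cone with the extra coordinate $n$; this is cleaner, avoids the separate treatment of the case $0 \in \{a',b',c'\}$, and applies verbatim to any finitely generated $\mathbb{N}_0^d$-graded algebra with homogeneous generators (so it would carry over to higher rank once a finite generating set of the full ring is known). What it gives up is effectivity: the paper's argument yields the explicit (if crude) bound $n < 24$ on the multipliers of the needed generators, whereas a Hilbert basis of your monoid $T$ is not bounded without further work. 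The essential input (Theorem \ref{thm:gens} together with the $\mathbb{N}_0^3$-grading of $\mathcal{J}$ by index) is the same in both proofs, and your proposal is a valid alternative.
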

\begin{proof} Choose a Gram matrix for $L$ of the form $\begin{psmallmatrix} a+b & b \\ b & c+b \end{psmallmatrix}$ with $a,b,c \in \mathbb{N}_0$. If $0 \in \{a, b, c\}$ then this reduces to a statement about weak Jacobi forms of rank one by Theorem \ref{thm:diag}, so assume all of $a,b,c$ are positive.

The ring $\mathcal{J}_L$ is generated by all monomials in the generators of Theorem \ref{thm:gens} that have index $L(n)$ for some $n \in \mathbb{N}_0$. Since $$f^2 \in \mathcal{J}_0 \subseteq \mathbb{C}[E_4, E_6][h(\tau, z), h(\tau, w), h(\tau, z+w): \; h \in \{\phi_{-1, 1/2}, \phi_{0, 1}, \phi_{0, 3/2}\}]$$ for every $f \in \mathcal{J}$, we only need to consider monomials in which the rank-two generators $\Phi_{*, *}$ of Theorem \ref{thm:gens} appear with exponent at most one. Suppose $f$ is such a monomial of index $L(n)$ with $n \ge 24$ and factor $f$ in the form $f = g \cdot h$ where $g$ is a product of distinct rank-two generators and $h \in \mathbb{C}[h(\tau, z), h(\tau, w), h(\tau, z+w): \; h \in \{\phi_{-1, 1/2}, \phi_{0, 1}, \phi_{0, 3/2}\}]$. Adding the possible indices shows that $g$ has index $\begin{psmallmatrix} \alpha + \beta & \beta \\ \beta & \gamma + \beta \end{psmallmatrix}$ with $\alpha, \beta, \gamma \le 11$. Then $h$ has index $\begin{psmallmatrix} x+y & y \\ y & z+y \end{psmallmatrix}$ with $x \ge 24a - 11$, $y \ge 24b - 11$, $z \ge 24c - 11$. In particular, $h$ contains either $\phi_{0, 3/2}(\tau, z)$ to exponent at least $2a$, or $\phi_{0, 1}(\tau, z)$ to exponent at least $3a$, or $\phi_{-1, 1/2}(\tau, z)$ to exponent at least $6a$; otherwise its index would have $$x \le (2a-1) \cdot 3 + (3a - 1) \cdot 2 + (6a - 1) \cdot 1 = 18a - 6 < 24a - 11.$$ In particular, $h$ splits off a factor of index $\begin{psmallmatrix} 6a & 0 \\ 0 & 0 \end{psmallmatrix}$. By the same argument, $h$ splits off factors of index $\begin{psmallmatrix} 6b & 6b \\ 6b & 6b \end{psmallmatrix}$ and $\begin{psmallmatrix} 0 & 0 \\ 0 & 6c \end{psmallmatrix}$, and multiplying these together yields a factor of index $L(6)$. 

In particular, we have shown that $\mathcal{J}_L$ is generated by the finitely many monomials that have index $L(n)$, $n < 24$. (Of course this bound is not sharp.)
\end{proof}

A folklore conjecture states that the graded ring $J_{*,L,*}^w:=\bigoplus_{k\in \ZZ, n\in \NN_0}J_{k,L(n)}^w$ is finitely generated over $\CC$ for any positive-definite integral lattice $L$. This is known when $L$ has rank one by \cite{EZ}, and when it has rank two by the above Corollary. Theorem \ref{thm:diag} implies that it is true for $L \oplus M$ if it is true for $L$ and $M$. For Weyl-invariant Jacobi forms for irreducible root lattices other than $E_8$, Wirthm\"uller's theorem \cite{W92} provides a stronger result (the rings are polynomial algebras). For general lattices $L$ little seems to be known.

\section{The weights of generators}\label{sec:weight}

The lemmas of the previous section also make it possible to compute the Hilbert series $$\mathrm{Hilb}\, \mathcal{J}(q, r, s, t) := \sum_{a, b, c = 0}^{\infty} \sum_{k \in \mathbb{Z}} \mathrm{dim}\, J_{k, \begin{psmallmatrix} a + b & b \\ b & c + b \end{psmallmatrix}}^w q^a r^b s^c t^k$$ in closed form, and therefore the $\mathbb{C}[E_4, E_6]$-module structure of weak Jacobi forms of every rank two index. The ring $\mathcal{J}$ fits into an exact sequence of the form $$0 \longrightarrow K \longrightarrow \mathcal{J} \stackrel{\varphi \mapsto \varphi(\tau, z, 0)}{\longrightarrow} R \longrightarrow 0.$$

Assume that the above $\varphi$ has matrix index $\begin{psmallmatrix} a  & b \\ b & c \end{psmallmatrix}$. The kernel consists exactly of multiples $\phi_{-1, 1/2}(\tau, w) f(\tau, z, w)$, where $f$ has index $\begin{psmallmatrix} a & b \\ b & c - 1 \end{psmallmatrix}$. Here one has the minor issue that the inequality $c - 1 \le b$ is not satisfied if $b = c$, i.e. $f$ does not belong to $\mathcal{J}$. In any case, $K$ is quite close to $\phi_{-1, 1/2}(\tau, w) \cdot \mathcal{J}$, and due to the additivity of the Hilbert series in short exact sequences our immediate goal will be to determine the series $\mathrm{Hilb}\, R$. Here $R$ must be understood as the multigraded ring whose piece of degree $(a, b, c, k)$ is the subspace of $J_{k, (a+c)/2 - b}^w$ spanned by $\varphi(\tau, z, 0)$ for weak Jacobi forms $\varphi$ of weight $k$ and index $\begin{psmallmatrix} a & b \\ b & c \end{psmallmatrix}$.

We will compute the dimensions $\mathrm{dim}\, R_{a, b, c, k}$ by induction using the following technical lemmas.

\begin{lemma}\label{lem:evenQ} Let $k \in \mathbb{Z}$. The map $$\mathcal{Q} : J_{2k, \begin{psmallmatrix} a & b \\ b & c \end{psmallmatrix}}^w \longrightarrow J_{2k, a/2}^w, \quad \varphi(\tau, z, w) \mapsto \varphi(\tau, z, 0)$$ is surjective except for the cases $(b, c) = (0, 1)$ and $$(a, b, c) = (a, a, a+1), \; a \ge 2 \; \text{and} \; (a, b, c) = (3, 2, 2)$$ where the image is trivial, and the cases $$(b, c) = (1, 1),  \; a \ge 2,$$ where the image is exactly $\phi_{-2, 1} \cdot J_{2k+2, a/2 - 1}^w$.
\end{lemma}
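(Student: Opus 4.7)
The statement splits naturally into two tasks: identifying the image of $\mathcal{Q}$ in the exceptional cases (where it is a proper submodule), and proving that $\mathcal{Q}$ is surjective otherwise. The crucial technical device is the change of variable $(z,w)\mapsto(z+w,-w)$, which is the unique nontrivial element of the anharmonic group of \S\ref{sec:matrix} that preserves the hyperplane $\{w=0\}$ and hence commutes with $\mathcal{Q}$; on matrix entries it acts as $(a,b,c)\mapsto(a,a-b,a-2b+c)$. This allows several exceptional index matrices to be identified.

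\smallskip

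\noindent\textit{The exceptional cases.} For $(b,c)=(0,1)$ the index $M$ is diagonal and Theorem \ref{thm:diag} identifies $J^w_{*,M}$ with $J^w_{*,a/2}\otimes J^w_{*,1/2}$; since $J^w_{*,1/2}=\phi_{-1,1/2}(\tau,w)\,\mathbb{C}[E_4,E_6]$ and $\phi_{-1,1/2}$ vanishes at $w=0$, the image of $\mathcal{Q}$ is trivial. Applying the change of variable above reduces $(a,a,a+1)$ with $a\ge 2$ to this case, and $(3,2,2)$ to the case $(b,c)=(1,1)$ with $a=3$. For the remaining exceptional family $(b,c)=(1,1)$, $a\ge 2$, I would diagonalize $M=\begin{psmallmatrix}a&1\\1&1\end{psmallmatrix}$ using $P=\begin{psmallmatrix}1&0\\-1&1\end{psmallmatrix}$: the new form $\tilde\varphi(\tau,\xi,\eta):=\varphi(\tau,\xi,\eta-\xi)$ has diagonal index $\begin{psmallmatrix}a-1&0\\0&1\end{psmallmatrix}$ and $\mathcal{Q}(\varphi)(\tau,z)=\tilde\varphi(\tau,z,z)$. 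Theorem \ref{thm:diag} then presents $\tilde\varphi(\tau,z,z)$ as an arbitrary element of $\phi_{-1,1/2}(\tau,z)\cdot J^w_{2k+1,(a-1)/2}$. The identity $f(\tau,-z)=(-1)^{\mathrm{wt}\,f}f(\tau,z)$ implies every odd-weight weak Jacobi form vanishes at $z=0$, hence at every translate by $\mathbb{Z}+\tau\mathbb{Z}$ by quasi-periodicity, and is therefore divisible by $\phi_{-1,1/2}(\tau,z)$. Thus $J^w_{2k+1,(a-1)/2}=\phi_{-1,1/2}\cdot J^w_{2k+2,(a-2)/2}$, collapsing the image to $\phi_{-2,1}\cdot J^w_{2k+2,a/2-1}$.

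\smallskip

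\noindent\textit{Surjectivity in the remaining cases.} For $b=0$ with $c\ne 1$, tensor any $\psi\in J^w_{2k,a/2}$ with a rank-one weak Jacobi form in $w$ of index $c/2$ whose value at $w=0$ is nonzero, for instance a monomial in $\phi_{0,1}(\tau,w)/12$ and $\tfrac12\phi_{0,3/2}(\tau,w)$. For $b\ge 1$ the argument is constructive: using Proposition \ref{prop:rankone} one writes $\psi$ as a $\mathbb{C}[E_4,E_6]$-polynomial in $\phi_{-2,1}(\tau,z)$ and $\phi_{0,1}(\tau,z)$ (and, if $a$ is odd, with a single factor of $\phi_{0,3/2}(\tau,z)$), and each monomial is lifted by multiplying the appropriate witness form from \S\ref{sec:construction} --- $\Phi_{-2,\begin{psmallmatrix}2&1\\1&2\end{psmallmatrix}}$, $\Phi_{0,\begin{psmallmatrix}2&1\\1&2\end{psmallmatrix}}$, $\Phi_{0,\begin{psmallmatrix}3&2\\2&3\end{psmallmatrix}}$, $\Phi_{0,\begin{psmallmatrix}3&1\\1&3\end{psmallmatrix}}$ --- by auxiliary factors in $w$ and $z+w$ chosen from $\phi_{0,1}(\tau,w)$, $\phi_{0,3/2}(\tau,w)$, $\phi_{0,1}(\tau,z+w)$, and $\phi_{0,3/2}(\tau,z+w)$ (all nonzero at $w=0$) to match the off-diagonal entry $b$ and the second diagonal entry $c$. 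The main obstacle is the combinatorial case analysis needed to confirm, for every nonexceptional triple $(a,b,c)$, that the system of linear constraints on the exponents of these auxiliary factors admits a nonnegative integer solution; the parities of $a,b,c$ and the inequality $b\le\min(a,c)$ drive the case split, paralleling the inductive decompositions of Lemmas \ref{lem:even}--\ref{lem:odd2}.
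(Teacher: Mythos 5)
Your identification of the exceptional cases is correct, and your route to them is clean: the substitution $(z,w)\mapsto(z+w,-w)$ does commute with $\mathcal{Q}$ and sends $(a,b,c)\mapsto(a,a-b,a-2b+c)$, so $(a,a,a+1)$ reduces to the diagonal case $(a,0,1)$ and $(3,2,2)$ to $(3,1,1)$; and your diagonalization of $\begin{psmallmatrix} a & 1 \\ 1 & 1 \end{psmallmatrix}$ correctly exhibits the image as $\phi_{-1,1/2}\cdot J^w_{2k+1,(a-1)/2}=\phi_{-2,1}\cdot J^w_{2k+2,a/2-1}$. This is essentially the paper's treatment of the boundary cases $a=b$ and $b=c$ (where Theorem \ref{thm:diag} applies after conjugation), just packaged slightly differently.

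The genuine gap is the other half of the lemma: proving that \emph{no other} triple is exceptional, i.e.\ that $\mathcal{Q}$ is surjective for every remaining $(a,b,c)$ with $0<b<\min(a,c)$. You describe a strategy (lift each monomial of $\psi$ by decorating a witness form with auxiliary factors in $w$ and $z+w$) and then explicitly defer ``the combinatorial case analysis needed to confirm\dots that the system of linear constraints on the exponents admits a nonnegative integer solution.'' That deferred verification \emph{is} the content of the lemma --- the exceptional list is delicate precisely because the constraints fail for indices like $(3,2,2)$ and $(a,1,1)$, and one must check they fail nowhere else. Two concrete difficulties with carrying out your program: (1) the auxiliary factors $\phi_{0,1}(\tau,z+w)$ and $\phi_{0,3/2}(\tau,z+w)$ are nonzero at $w=0$ but restrict to $\phi_{0,1}(\tau,z)$ and $\phi_{0,3/2}(\tau,z)$, not to constants, so every $z+w$ factor used to raise the off-diagonal entry $b$ also alters the target monomial in $z$, and the bookkeeping of weights, indices and restrictions must close up simultaneously; (2) the boundary regimes $b=a-1$, $b=a-2$ and the small-index cases (e.g.\ $c=2$, where surjectivity for odd $a$ holds only for $a\ge 5$) each need separate treatment. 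The paper avoids a direct monomial count by instead inducting on the index through the decompositions of Lemmas \ref{lem:surj2}, \ref{lem:mixed} and \ref{lem:odd2} --- e.g.\ applying $\mathcal{Q}$ to the identity of Lemma \ref{lem:mixed}(iii) expresses $\mathcal{Q}\bigl(J^w_{2k,\begin{psmallmatrix} a & a-2 \\ a-2 & c\end{psmallmatrix}}\bigr)$ as $\phi_{-1,1/2}\cdot J^w_{2k+1,(a-1)/2}+\phi_{0,3/2}\cdot J^w_{2k,(a-3)/2}=J^w_{2k,a/2}$ --- split according to the parities of $a$ and $c$. You gesture at these lemmas as a ``parallel'' but never invoke them, so the surjectivity half of the statement remains unproved in your write-up.
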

\begin{proof} 
If $a = b$ then weak Jacobi forms are spanned by monomials $f(\tau, z+w) g(\tau, w)$ with $f \in J_{*, a/2}^w$ and $g \in J_{*, (c - a)/2}^w$, and setting $w = 0$ shows that this is surjective unless $(c - a) = 1$ (in which case the image is zero). Similarly, if $b = c$ then weak Jacobi forms are spanned by monomials $f(\tau, z) g(\tau, z+w)$ with $f \in J_{*, (a - c)/2}^w$, $g \in J_{*, c/2}^w$, so we can read off the image immediately in this case also. Suppose that $0 < b < \min(a, c)$. 

\smallskip

If $a, c$ are even then this claim is contained in Lemma \ref{lem:surj2}. Suppose $a$ is odd and $c$ is even. If $0 < b \le a - 3$ then $$\mathcal{Q}(J_{2k, \begin{psmallmatrix} a & b \\ b & c \end{psmallmatrix}}^w) \subseteq J_{2k, a/2}^w = \phi_{0, 3/2} \cdot J_{2k, (a-3)/2}^w = \mathcal{Q} \Big( \phi_{0, 3/2}(\tau, z) \cdot J_{2k, \begin{psmallmatrix} a - 3 & b \\ b & c \end{psmallmatrix}}^w \Big) \subseteq \mathcal{Q}\Big(J_{2k, \begin{psmallmatrix} a & b \\ b & c \end{psmallmatrix}}^w\Big)$$ showing surjectivity. When $b = a-2$ and $c$ is even, setting $w = 0$ in Lemma \ref{lem:mixed} (iii) and applying Lemma \ref{lem:surj2} yields 
\begin{align*} \mathcal{Q} \Big( J_{2k, \begin{psmallmatrix} a & a-2 \\ a-2 & c \end{psmallmatrix}}^w \Big) &= \phi_{-1, 1/2}(\tau, z) \cdot \mathcal{Q} \Big( J_{2k+1, \begin{psmallmatrix} a-1 & a -2 \\ a-2 & c \end{psmallmatrix}}^w \Big) + \Phi_{0, \begin{psmallmatrix} 3 & 2 \\ 2 & 3 \end{psmallmatrix}}(\tau, z, 0) \cdot \mathcal{Q}\Big( J_{2k, \begin{psmallmatrix} a - 3 & a - 3 \\ a - 3 & c - 2 \end{psmallmatrix}}^w \Big) \\ &= \phi_{-1, 1/2} \cdot J_{2k + 1, (a-1)/2}^w + \phi_{0, 3/2} \cdot J_{2k, (a-3)/2}^w \\ &= J_{2k, a/2}^w. 
\end{align*} 
When $b = a-1$ and $c \ge 4$ we obtain 
\begin{align*} \mathcal{Q} \Big( J_{2k, \begin{psmallmatrix} a & a-1 \\ a-1 & c \end{psmallmatrix}}^w \Big) &= \phi_{-1, 1/2}(\tau, z) \cdot \mathcal{Q} \Big( J_{2k+1, \begin{psmallmatrix} a-1 & a -1 \\ a-1 & c \end{psmallmatrix}}^w \Big) + \Phi_{0, \begin{psmallmatrix} 3 & 1 \\ 1 & 3 \end{psmallmatrix}}(\tau, z, 0) \cdot \mathcal{Q}\Big( J_{2k, \begin{psmallmatrix} a - 3 & a - 3 \\ a - 3 & c - 4 \end{psmallmatrix}}^w \Big) \\ &= \phi_{-1, 1/2} \cdot J_{2k + 1, (a-1)/2}^w + \phi_{0, 3/2} \cdot J_{2k, (a-3)/2}^w = J_{2k, a/2}^w. 
\end{align*} 
In the case $c = 2$, $b = 2$ we change variables to pass to the index matrix $\begin{psmallmatrix} a - 2 & 0 \\ 0 & 2 \end{psmallmatrix}$ to obtain surjectivity if and only if $a \ge 5$; and when $c = 2$ and $b = 0, 1$ we obtain surjectivity by a similar argument.

The same argument also applies to the cases where $a$ is even and $c$ is odd (swapping the role of $z$ and $w$ is Lemma \ref{lem:mixed}), and yields surjectivity also. Finally, when both $a$ and $c$ are odd, we use Lemma \ref{lem:odd2} to see that $\mathcal{Q}$ is surjective except when $c = 1$, in which case the image is always trivial if $b = 0$ and consists of multiples of $\phi_{-2, 1}$ when $b = 1$.
\end{proof}

\begin{lemma} Let $k \in \mathbb{Z}$. The map $$\mathcal{Q} : J_{2k+1, \begin{psmallmatrix} a & b \\ b & c \end{psmallmatrix}}^w \longrightarrow J_{2k+1, a/2}^w$$ is surjective except when $(b, c) = (0, 1)$ or $(a, b, c) = (4, 2, 2)$, in which cases the image is zero.
\end{lemma}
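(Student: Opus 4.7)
The plan is to mirror the case analysis of Lemma \ref{lem:evenQ}, exploiting the crucial simplification that the target $J_{2k+1, m}^w$ vanishes for every integer $m \le 1$ (since any odd-weight integer-index weak Jacobi form is a multiple of $\phi_{-1, 2}$, which has index $2$). This eliminates many candidate exceptions automatically; only $a \ge 3$ (integer case) or $a$ odd (half-integer case) need verification.

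First I would dispose of the boundary subcases. When $b = 0$, Theorem \ref{thm:diag} writes $\varphi = \sum f_i(\tau, z) g_i(\tau, w)$ with $f_i \in J_{*, a/2}^w$ and $g_i \in J_{*, c/2}^w$, so $\mathcal{Q}\varphi$ lies in $J_{*, a/2}^w \cdot \{g(\tau, 0): g \in J_{*, c/2}^w\}$. The evaluation at $w = 0$ realizes every modular form unless $c = 1$, because every element of $J_{*, 1/2}^w$ is a multiple of $\phi_{-1, 1/2}$ and vanishes at the origin; this produces the $(b, c) = (0, 1)$ exception. The case $b = a$ reduces to $b = 0$ via $(z, w) \mapsto (z + w, -w)$, which commutes with $\mathcal{Q}$. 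The case $b = c$ is the delicate one: after $(z, w) \mapsto (z, z + w)$, $\varphi$ corresponds to a form $\psi$ of diagonal index $\begin{psmallmatrix} a - c & 0 \\ 0 & c \end{psmallmatrix}$ and $\mathcal{Q}\varphi(\tau, z) = \psi(\tau, z, z)$, so the image lies in $J_{*, (a-c)/2}^w \cdot J_{*, c/2}^w$. This product is purely even-weight exactly when both factor indices equal $1$, forcing $(a, b, c) = (4, 2, 2)$; there the target $J_{2k+1, 2}^w = \mathbb{C}[E_4, E_6] \cdot \phi_{-1, 2}$ is nontrivial while the image is zero, producing the sporadic exception. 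Every other configuration of $b = c$ contributes at least one $\phi_{-1, 1/2}$ or $\phi_{-1, 2}$ factor, yielding the full odd-weight target.

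For the main range $0 < b < \min(a, c)$ I would argue by the parities of $a$ and $c$. When both are even, Lemma \ref{lem:surj2} applies directly since $b < c$ implies the required inequality $b < 2(c/2)$. When $a$ is odd I invoke the appropriate case of Lemma \ref{lem:mixed}; when $a$ and $c$ are both odd I invoke Lemma \ref{lem:odd2}. Each of these lemmas decomposes $\varphi$ as a sum $\phi_{-1, 1/2}(\tau, w) \cdot F + \Phi \cdot G$ in which $\Phi$ is an $A_2$- or odd-lattice-type generator. Evaluating at $w = 0$ kills the first summand and leaves $\Phi(\tau, z, 0) \cdot G(\tau, z, 0)$, where $\Phi(\tau, z, 0)$ is a known nonzero multiple of $\phi_{-1, 1/2}$ or $\phi_{0, 3/2}$; any odd-weight target $\phi_{-1, 1/2}(\tau, z) h(\tau, z)$ or $\phi_{-1, 2}(\tau, z) h(\tau, z)$ with $h$ in the even-weight subring is then lifted by applying Lemma \ref{lem:surj2} to construct $G$. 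The main obstacle will be this parity bookkeeping: handling each parity combination uniformly and confirming that no additional sporadic failure appears near the boundary $b = \min(a, c) - 1$, which amounts to checking that no index matrix other than $\begin{psmallmatrix} 4 & 2 \\ 2 & 2 \end{psmallmatrix}$ splits integrally into two copies of the rank-one lattice of index $1$ in a way compatible with $\mathcal{Q}$.
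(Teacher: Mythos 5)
Your overall strategy coincides with the paper's: dispose of the boundary cases $b=0$, $b=a$, $b=c$ using Theorem \ref{thm:diag} and the anharmonic change of variables, then split the range $0<b<\min(a,c)$ by the parities of $a$ and $c$ and lift the odd-weight target through the even-weight surjectivity statements. (The paper writes $J^w_{2k-1,a/2}=\phi_{-1,1/2}\cdot J^w_{2k,(a-1)/2}$ and realizes the right-hand side as $\mathcal{Q}$ applied to $\phi_{-1,1/2}(\tau,z)\cdot J^w_{2k,\begin{psmallmatrix}a-1&b\\b&c\end{psmallmatrix}}$ or to $\phi_{-1,1/2}(\tau,z+w)\cdot J^w_{2k,\begin{psmallmatrix}a-1&b-1\\b-1&c-1\end{psmallmatrix}}$, which is your argument in slightly different packaging.) One small slip: for $a,c$ both odd the relevant odd-weight decomposition is Lemma \ref{lem:odd}, not Lemma \ref{lem:odd2}, which treats even weight. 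Your treatment of $b=c$, including the identification of $\begin{psmallmatrix}4&2\\2&2\end{psmallmatrix}$ as the unique sporadic exception arising there, is correct and matches the paper.

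There is, however, a gap in the case $b=a$, and it is one your own setup exposes but which you do not follow through on. Your reduction sends $\begin{psmallmatrix}a&a\\a&c\end{psmallmatrix}$ to the diagonal index $\begin{psmallmatrix}a&0\\0&c-a\end{psmallmatrix}$ in a way that commutes with $\mathcal{Q}$, so by your own analysis of the $b=0$ case the image of $\mathcal{Q}$ is \emph{zero} whenever $c-a=1$, since every index-$\tfrac12$ weak Jacobi form is a multiple of $\phi_{-1,1/2}$ and vanishes at the origin. The target $J^w_{2k+1,a/2}$ is nonzero for all odd $a$ and for even $a\ge 4$, so the indices $\begin{psmallmatrix}a&a\\a&a+1\end{psmallmatrix}$ with $a\notin\{0,2\}$ are further exceptions with trivial image, which neither your write-up nor the lemma as stated accounts for. (Compare Lemma \ref{lem:evenQ}, which does list the family $(a,b,c)=(a,a,a+1)$; note also that the Hilbert-series computation in Theorem \ref{thm:hilbert} subtracts the correction term $\sum_{a\ge1}\sum_{k}\dim J^w_{k,a/2}\,t^k r^a s$ over weights of both parities, so the downstream argument already treats these indices as exceptional in odd weight.) A complete proof must either add this family to the list of exceptions or explain why it does not occur; asserting that $b=a$ ``reduces to $b=0$'' leaves precisely the exceptional subcase of that reduction unexamined.
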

\begin{proof} When $b = 0$, using Theorem \ref{thm:diag} we see that $\mathcal{Q}$ is surjective unless $c = 1$, in which case all weak Jacobi forms are multiples of $\phi_{-1, 1/2}(\tau, w)$ and the image of $\mathcal{Q}$ is trivial. Similarly, when $b = c$ we can conjugate $\begin{psmallmatrix} a & c \\ c & c \end{psmallmatrix}$ to $\begin{psmallmatrix} a - c & 0 \\ 0 & c \end{psmallmatrix}$ to see that $\mathcal{Q}$ is surjective when $c \ge 3$ or $c = 1$; or when $c = 2$ and $a \ne 4$ (when $a=4$ the image is trivial). When $b = a$ we see that $\mathcal{Q}$ is surjective for all indices $c$ by a similar argument. Therefore suppose $0 < b < \min(a, c)$. We will show that $\mathcal{Q}$ is always surjective: \\
(i) When $a$ and $c$ are both even, this is contained in Lemma \ref{lem:surj2}. \\
(ii) Suppose $a$ is odd and $c$ is even. By Lemma \ref{lem:surj}, $$J_{2k-1, a/2}^w =\phi_{-1, 1/2} \cdot J_{2k, (a-1)/2}^w = \mathcal{Q} \Big( \phi_{-1, 1/2}(\tau, z) \cdot J_{2k, \begin{psmallmatrix} a-1 & b \\ b & c\end{psmallmatrix}}^w \Big) \subseteq \mathcal{Q}\Big(J_{2k-1, \begin{psmallmatrix} a & b \\ b & c \end{psmallmatrix}}^w\Big).$$
(iii) Suppose $a, c$ are both odd. Then $$J_{2k-1, a/2}^w =\phi_{-1, 1/2} \cdot J_{2k, (a-1)/2}^w = \mathcal{Q} \Big( \phi_{-1, 1/2}(\tau, z+w) \cdot J_{2k, \begin{psmallmatrix} a-1 & b-1 \\ b-1 & c-1\end{psmallmatrix}}^w \Big) \subseteq \mathcal{Q}\Big(J_{2k-1, \begin{psmallmatrix} a & b \\ b & c \end{psmallmatrix}}^w\Big)$$ by Lemma \ref{lem:surj2}.\\
(iv) Suppose $a$ is even and $c$ is odd. Then Lemma \ref{lem:mixed} (after swapping the variables $z$ and $w$) yields $$\mathcal{Q} \Big( J_{2k -1, \begin{psmallmatrix} a & b \\ b & c \end{psmallmatrix}}^w \Big) = \phi_{-1, 1/2} \cdot \mathcal{Q}\Big( J_{2k, \begin{psmallmatrix} a-1 & b \\ b & c \end{psmallmatrix}}^w \Big),$$ which equals $\phi_{-1, 1/2} \cdot J_{2k, (a-1)/2}^w= \phi_{-1, 1/2}\phi_{0, 3/2} \cdot J_{2k, a/2-2}^w = J_{2k-1, a/2}^w$ by Lemma \ref{lem:evenQ}.
\end{proof}

\begin{theorem}\label{thm:hilbert} The Hilbert series of $\mathcal{J}$ has closed form $$\mathrm{Hilb}\, \mathcal{J}(q, r, s, t) = \sum_{\substack{a, b, c \in \mathbb{N}_0 \\ k \in \mathbb{Z}}} \mathrm{dim}\, J_{k, \begin{psmallmatrix} a+b & b \\ b & c+b \end{psmallmatrix}}^w q^a r^b s^c t^k = \frac{F(q, r, s, t)}{(1 - t^4)(1 - t^6)},$$ where
\begin{align*} 
F(q, r, s, t) &= \frac{qrs t^{-2}}{(1 - qt^{-1}) (1 - rt^{-1}) (1 - st^{-1})} \\ &+ \frac{(1 - q + q^2)(1 - r + r^2)(1 - s + s^2) - qrs t^{-1} (qr + qs + rs - 2qrs) + qrs (1 - qrs)}{(1 - q)(1-qt^{-1})(1-r)(1-rt^{-1})(1-s)(1-st^{-1})} 
\end{align*}
\end{theorem}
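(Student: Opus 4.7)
The plan is to use the short exact sequence $0 \to K \to \mathcal{J} \xrightarrow{\mathcal{Q}} R \to 0$ described in the text just before the theorem, together with the surjectivity lemmas of the previous section. By additivity of Hilbert series, $\mathrm{Hilb}\,\mathcal{J} = \mathrm{Hilb}\,R + \mathrm{Hilb}\,K$, and I will analyze each summand separately and then solve the resulting linear equation for $\mathrm{Hilb}\,\mathcal{J}$.

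For the kernel, the key observation is that a form $\varphi$ with parameters $(a, b, c)$ lies in $K$ iff $\varphi(\tau, z, 0) \equiv 0$. Since $\vartheta(\tau, w)$ has simple zeros exactly at $w \in \mathbb{Z} + \tau\mathbb{Z}$ and the Jacobi transformation law propagates vanishing on $w = 0$ to the whole lattice, any such $\varphi$ is divisible by $\phi_{-1, 1/2}(\tau, w)$. When $c \geq 1$ the quotient lies in $\mathcal{J}$ with parameters $(a, b, c-1)$ and weight $k+1$, so $K_{a, b, c, k} \cong \mathcal{J}_{a, b, c-1, k+1}$, contributing $s t^{-1} \mathrm{Hilb}\,\mathcal{J}$; the case $c = 0$ (where the quotient would have a matrix index $\begin{psmallmatrix} a+b & b \\ b & b-1 \end{psmallmatrix}$ outside the admissible range) produces a finite boundary correction that must be computed directly.

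For $\mathrm{Hilb}\,R$ I would apply Lemma \ref{lem:evenQ} and its odd-weight counterpart. For all but finitely many parameter triples $\mathcal{Q}$ is surjective onto $J^w_{k, (a+b)/2}$, so the main contribution is
\[
\frac{1}{1-s}\sum_{a, b \geq 0,\, k \in \mathbb{Z}} \dim J^w_{k, (a+b)/2} \, q^a r^b t^k,
\]
where the $1/(1-s)$ comes from the free sum over $c$. Using Proposition \ref{prop:rankone}, the rank-one Hilbert series is $H(q, t) = (1-q+q^2)/[(1-q)(1-qt^{-1})(1-t^4)(1-t^6)]$, and the double sum above evaluates to $[qH(q,t) - rH(r,t)]/(q-r)$, which can be put into closed form by a short partial-fraction computation. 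The finitely many exceptional triples in the two lemmas (where the image of $\mathcal{Q}$ is either zero or the special subspace $\phi_{-2, 1} \cdot J^w_{k+2, (a+b)/2 - 1}$ in the case $(b, c) = (1, 1)$) contribute explicit polynomial corrections.

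Combining and solving the functional equation $(1 - st^{-1}) \mathrm{Hilb}\,\mathcal{J} = \mathrm{Hilb}\,R + (\text{boundary from } K|_{c=0})$ then yields $F/[(1-t^4)(1-t^6)]$. The main obstacle is the meticulous bookkeeping of the boundary corrections and the finitely many exceptional cases: their individual rational contributions must combine precisely to produce both the rather intricate numerator $(1-q+q^2)(1-r+r^2)(1-s+s^2) - qrst^{-1}(qr+qs+rs-2qrs) + qrs(1 - qrs)$ and the additional summand $qrst^{-2}/[(1-qt^{-1})(1-rt^{-1})(1-st^{-1})]$. A crucial consistency check is that the final $F$ must be $S_3$-symmetric in $(q, r, s)$, as demanded by the anharmonic group action on the parameters; this symmetry is manifestly broken by our map $\mathcal{Q}$ (which singles out the variable $w$), so forcing the exceptional and boundary contributions to assemble symmetrically provides a strong verification that no terms have been dropped or miscounted.
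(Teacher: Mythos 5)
Your proposal follows the paper's proof essentially verbatim: the exact sequence $0\to K\to\mathcal{J}\to R\to 0$ induced by $\varphi\mapsto\varphi(\tau,z,0)$, the identification of $K$ with $\phi_{-1,1/2}(\tau,w)\cdot\mathcal{J}$ up to a boundary term (giving $\mathrm{Hilb}\,K = st^{-1}\,\mathrm{Hilb}\,\mathcal{J}+B$), the evaluation of $\mathrm{Hilb}\,R$ from Lemma \ref{lem:evenQ} and its odd-weight companion, and the resulting equation $(1-st^{-1})\,\mathrm{Hilb}\,\mathcal{J}=\mathrm{Hilb}\,R+B$. Your divided-difference form $[qH(q,t)-rH(r,t)]/(q-r)$ of the main term and the $S_3$-symmetry sanity check are both sound.

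The one point you have mischaracterized, and which would derail the computation if taken literally, is the nature of the corrections. The exceptional loci of the two surjectivity lemmas are not finitely many triples: $(b,c)=(0,1)$, $(a,b,c)=(a,a,a+1)$ with $a\ge 2$, and $(b,c)=(1,1)$ with $a\ge 2$ are one-parameter families, so their contributions are geometric series (rational functions with denominators such as $1-q$ and $1-qt^{-1}$), not polynomials. Likewise the $c=0$ boundary of $K$ is a two-parameter family of indices $\begin{psmallmatrix} a+b & b \\ b & b \end{psmallmatrix}$; these are $G$-equivalent to diagonal indices, so by Theorem \ref{thm:diag} the kernel of $\mathcal{Q}$ there has weight enumerator $P^w_{a/2}(t)P^w_{b/2}(t)-P^w_{(a+b)/2}(t)$ generically (with its own small-index exceptions), and this infinite double sum must be put in closed form — it is not a finite correction. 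Once these families are summed rather than treated as finitely many polynomial adjustments, your functional equation does yield the stated $F$.
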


\begin{remark} It is amusing to check that certain properties of the ring $\mathcal{J}$ are reflected in the function $F$. For example, the series $F$ is symmetric in the variables $q, r, s$, corresponding to the action of the anharmonic group on $\mathcal{J}$. Setting $t = 1$ yields $$\frac{(1 - q + q^2)(1 - r + r^2)(1 - s + s^2) + qrs(2 - q - r - s)}{(1 - q)^2(1-r)^2(1-s)^2},$$ which is the generating function of the determinant $\mathrm{det}(\begin{psmallmatrix} a+b & b \\ b & c+b \end{psmallmatrix}) = ab+ac+bc$ (up to some coefficients in exponents of determinant zero); this corresponds to the rank of $J_{*, \begin{psmallmatrix} a+b & b \\ b & c+b \end{psmallmatrix}}^w$ as a $\mathbb{C}[E_4, E_6]$-module. Also, setting $r$ to zero yields the factorization $$F(q, 0, s, t) = \frac{1 + q^3}{(1 - qt^{-1})(1 - q^2)} \cdot \frac{1 + s^3}{(1 - st^{-1})(1 - s^2)},$$ corresponding to the fact that $J^w_{*, \begin{psmallmatrix} * & 0 \\ 0 & * \end{psmallmatrix}}$ is the tensor square of $J^w_{*, *}$.
\end{remark}

\begin{proof} The structure theorem for weak Jacobi forms of rank one shows that $$ \sum_{k \in \mathbb{Z}} \mathrm{dim}\, J_{k, a/2}^w t^k = \frac{t^{-a} + t^{2 - a} + ... + 1}{(1 - t^4)(1-t^6)} = \frac{t^{-a} - t^{1-a} + t^{2-a} - t}{(1 - t)(1-t^4)(1-t^6)}$$ for every $a \in \mathbb{N}$. (When $a = 0$ this formula is incorrect; it yields the numerator $(1 - t)^2$ rather than $1 - t$.) After correcting for this we find 
\begin{align*} &\quad \sum_{k \in \mathbb{Z}} \sum_{a, b, c = 0}^{\infty} \mathrm{dim}\, J_{k, (a+b)/2}^w q^a r^b s^c t^k \\ &= \frac{1}{(1 - t)(1-t^4)(1 - t^6)} \Big(\sum_{b=0}^{\infty} r^b \sum_{c=0}^{\infty} s^c \sum_{a = 0}^{\infty} ((qt^{-1})^a (1 -t + t^2) - q^a t) + t(1-t) \sum_{c = 0}^{\infty} s^c \Big)\\ &= \frac{1}{(1 - s)(1-t) (1-t^4)(1-t^6)} \Big[ \frac{1 - t + t^2}{(1 - qt^{-1})(1 - rt^{-1})} - \frac{t}{(1 - q)(1 - r)} + t(1-t) \Big]. 
\end{align*}

Taking the exceptional cases in the above lemmas where the degree $(a, b, c, k)$ piece of $R$ does not equal $J_{k, a/2}^w$ into account, i.e. the indices $$(a, b, c) = (0, a, 1), (1, 2, 0), (2, 2, 0)$$ as well as $(b, c) = (0, 1), (1, 0)$, we obtain 
\begin{align*} 
\mathrm{Hilb}\, R &=  \frac{1}{(1 - s)(1-t) (1-t^4)(1-t^6)} \Big[ \frac{1 - t + t^2}{(1 - qt^{-1})(1 - rt^{-1})} - \frac{t}{(1 - q)(1 - r)} + t(1-t)\Big] \\ &\quad - \frac{1}{(1 - t^4)(1 - t^6)} \Big( qr^2 + q^2 r^2 t^{-1} \Big) \\ &\quad - \sum_{a = 0}^{\infty} \sum_{k \in \mathbb{Z}} \mathrm{dim}\, J_{k, a/2}^w t^k q^a s  - \sum_{a=1}^{\infty} \sum_{k \in \mathbb{Z}} \mathrm{dim}\, J_{k, a/2}^w t^k r^a s \\ &\quad - \frac{1}{(1 - t^4)(1 - t^6)} \sum_{a = 1}^{\infty} q^a r, 
\end{align*} 
where the series of weak Jacobi form dimensions is $$\sum_{a = 0}^{\infty} \sum_{k \in \mathbb{Z}} \mathrm{dim}\, J_{k, a/2}^w t^k q^a = \frac{(1 + q^3)}{(1 - t^4)(1 - t^6)(1 - qt^{-1})(1 - q^2)}.$$

The Hilbert series of the kernel $K$ is $$\mathrm{Hilb}\, K = st^{-1} \cdot \mathrm{Hilb}\, \mathcal{J} + \sum_{k \in \mathbb{Z}} \sum_{c = 0}^{\infty} \sum_{a = 0}^{\infty} \mathrm{dim}\, \mathrm{ker} \Big( \mathcal{Q} : J_{k, \begin{psmallmatrix} a + c & c \\ c & c \end{psmallmatrix}}^w \rightarrow J_{k, (a+c)/2}^w \Big) q^a r^c t^k.$$ This kernel is zero when $c = 0$ and otherwise has dimension $$\mathrm{dim}\, J_{k, \begin{psmallmatrix} a + c & c \\ c & c \end{psmallmatrix}}^w - \mathrm{dim}\, J_{k, (a+c)/2}^w$$ except for the special cases $c = 1$ and $c = 2$, $a \in \{1, 2\}$. Considering the weights of the missing $\mathbb{C}[E_4, E_6]$-module generators gives us 
\begin{align*} &\quad (1 - t^4)(1-t^6) \cdot \sum_{k \in \mathbb{Z}} \sum_{c = 0}^{\infty} \sum_{a = 0}^{\infty} \mathrm{dim}\, \mathrm{ker} \Big( \mathcal{Q} : J_{k, \begin{psmallmatrix} a + c & c \\ c & c \end{psmallmatrix}}^w \rightarrow J_{k, (a+c)/2}^w \Big) q^a r^c t^k \\ &= \sum_{c=1}^{\infty} \sum_{a = 0}^{\infty} \Big( P_{a/2}^w(t) P_{c/2}^w(t) - P_{(a+c)/2}^w(t) \Big) q^a r^c + \sum_{a=1}^{\infty} q^a r + qr^2 + q^2 r^2 t^{-1}, 
\end{align*} 
where $P_{a/2}^w(t)$ is the weak Hilbert polynomial $$P_{a/2}^w(t) = \frac{t^{-a} - t^{1-a} + t^{2 - a} - t}{1 - t}.$$ This simplifies to $$\frac{1 - q + q^2 - r - q^2 r + q^2 r^2 - r t^{-1} (1 - q - r)}{(1 - q)(1 - r)(1 - qt^{-1})(1 - rt^{-1})} + \frac{qr}{1 - q} + qr^2 + q^2 r^2 t^{-1}.$$ The Hilbert series of $\mathcal{J}$ is now determined by $$\mathrm{Hilb}\, \mathcal{J} = \mathrm{Hilb}\, K + \mathrm{Hilb}\, R.$$ Using some elementary algebraic manipulations we obtain the closed form in the claim.
\end{proof}

Theorem \ref{th:main3} follows quickly from Theorem \ref{thm:hilbert}. Namely, if $L \subseteq \mathbb{R}^2$ is an integral lattice with Gram matrix of the form $\begin{psmallmatrix} a+b & b \\ b & c+b \end{psmallmatrix}$ with $a,b,c \ge 0$, then $J_{*, L}^w$ is a free $\mathbb{C}[E_4, E_6]$-module on generators $\varphi_1,...,\varphi_n$ of weights $k_1,...,k_n$, where $t^{k_1} + ... + t^{k_n}$ is the coefficient of $q^a r^b s^c$ in the power series expansion of $F(q, r, s, t)$ about $q, r, s = 0$.

As an additional corollary, we obtain a description of weak Jacobi forms of small weight.

\begin{corollary} The minimal weight of a weak Jacobi form of index $M = \begin{psmallmatrix} a+b & b \\ b & c+b \end{psmallmatrix}$ is $$k_{\text{min}} = -\epsilon = -(a+b+c).$$ The space $J_{k_{\text{min}}, M}^w$ is always one-dimensional, spanned by the theta block $$\frac{\vartheta(\tau, z)^a \vartheta(\tau, z+w)^b \vartheta(\tau, w)^c}{\eta(\tau)^{3\epsilon}}.$$ The space $J_{k_{\text{min}} + 1, M}^w$ is nonzero if and only if $abc \ne 0$, in which case it is also one-dimensional, spanned by the form $$\Big( \frac{\vartheta'(\tau, z)}{\vartheta(\tau, z)} + \frac{\vartheta'(\tau, w)}{\vartheta(\tau, w)} - \frac{\vartheta'(\tau, z+w)}{\vartheta(\tau, z+w)} \Big) \cdot \frac{\vartheta(\tau, z)^a \vartheta(\tau, z+w)^b \vartheta(\tau, w)^c}{\eta(\tau)^{3 \epsilon}}.$$ The space $J_{k_{\text{min} + 2}, M}^w$ has dimension $$\mathrm{dim}\, J_{k_{\text{min}} + 2, M}^w = \delta_{a \ge 2} + \delta_{b \ge 2} + \delta_{c \ge 2} \le 3.$$ It is spanned by the subset of $$\frac{\phi_{0, 1}(\tau, z) \vartheta(\tau, z)^{a - 2} \vartheta(\tau, z+w)^b \vartheta(\tau, w)^c}{\eta(\tau)^{3\epsilon - 6}}, \; \frac{\phi_{0, 1}(\tau, z+w) \vartheta(\tau, z)^{a} \vartheta(\tau, z+w)^{b-2} \vartheta(\tau, w)^c}{\eta(\tau)^{3\epsilon - 6}},$$ $$\frac{\phi_{0, 1}(\tau, w) \vartheta(\tau, z)^{a} \vartheta(\tau, z+w)^{b} \vartheta(\tau, w)^{c-2}}{\eta(\tau)^{3\epsilon - 6}}$$ which are holomorphic in $z$ and $w$.
\end{corollary}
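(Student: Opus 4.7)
The plan is to verify that each proposed form is a weak Jacobi form of the claimed weight and index, then use Theorem~\ref{th:main3} to confirm that the claimed dimensions are exact.

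First I would check weight, index, and regularity of each candidate. For the theta block of weight $-\epsilon$, adding contributions from $\vartheta$ (weight and index $1/2$) and $\eta$ (weight $1/2$) gives weight $(a+b+c)/2 - 3\epsilon/2 = -\epsilon$ and index matrix $\begin{psmallmatrix} a+b & b \\ b & c+b \end{psmallmatrix}$; the $q$-orders balance to $0$, so the form is weak. For the weight $-\epsilon+1$ candidate, Section~\ref{sec:construction}(i) implies that the bracketed combination of logarithmic derivatives is abelian of weight $1$ with simple poles on the three divisors $\{z \in \mathbb{Z}+\tau\mathbb{Z}\}$, $\{w \in \mathbb{Z}+\tau\mathbb{Z}\}$, $\{z+w \in \mathbb{Z}+\tau\mathbb{Z}\}$; multiplying by the theta block clears all poles precisely when $abc \ne 0$. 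The weight $-\epsilon+2$ candidates are verified analogously: each is $\phi_{0,1}$ (weight $0$, index $1$) times a theta block in which one $\vartheta$-exponent is decreased by $2$, so holomorphy requires the corresponding $a-2$, $b-2$, or $c-2$ to be nonnegative.

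Next I would compute the dimensions from Theorem~\ref{th:main3}. Because $\dim M_j(\SL_2(\mathbb{Z})) = \delta_{j=0}$ for $0 \le j \le 3$, freeness of $J^w_{*, M}$ over $\mathbb{C}[E_4, E_6]$ yields
\[
\dim J^w_{-\epsilon + j, M} = [t^{-\epsilon + j}] F_M(t), \quad j = 0, 1, 2,
\]
where $F_M(t)$ is the polynomial of Theorem~\ref{th:main3}. Since $P_a(t) = t^{-a}(1 + t^2 + \ldots + t^a)$ contains only exponents congruent to $-a$ modulo $2$, the product $P_a P_b P_c$ contributes $1$ to $[t^{-\epsilon}]$, $0$ to $[t^{1-\epsilon}]$, and $\delta_{a \ge 2} + \delta_{b \ge 2} + \delta_{c \ge 2}$ to $[t^{2-\epsilon}]$. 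The summand $Q_a Q_b Q_c$ is supported at exponents $\ge 3 - \epsilon$; the three permutations $-t^{-1} Q_a Q_{b-1} Q_{c-1}$ at exponents $\ge 4 - \epsilon$; and $(2t^{-1}-1) Q_{a-1} Q_{b-1} Q_{c-1}$ at exponents $\ge 5 - \epsilon$; none contributes at $[t^{-\epsilon + j}]$ for $j \le 2$. The explicit $t^{1-\epsilon}$ term (present when $abc \ne 0$) supplies the remaining coefficient. Summing,
\[
[t^{-\epsilon}] F_M = 1, \quad [t^{1-\epsilon}] F_M = \mathbf{1}_{abc \ne 0}, \quad [t^{2-\epsilon}] F_M = \delta_{a \ge 2} + \delta_{b \ge 2} + \delta_{c \ge 2}.
\]

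Finally, nonzeroness of the weight $-\epsilon$ and $-\epsilon+1$ candidates is immediate from the Jacobi triple product applied to the leading Fourier coefficient. For the weight $-\epsilon + 2$ candidates, dividing each by the weight-$(-\epsilon)$ theta block yields the three meromorphic functions proportional to $\wp(\tau, z)$, $\wp(\tau, z+w)$, $\wp(\tau, w)$, whose poles lie on three distinct divisors in $\mathbb{H} \times \mathbb{C}^2$ and which are therefore $\mathbb{C}$-linearly independent; hence so are the original forms.

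The principal obstacle will be the bookkeeping in the dimension count: the polynomial $F_M(t)$ has six $Q$-type summands with overlapping support, and the edge cases $a, b, c \in \{0, 1\}$ (where $Q_0 = Q_{-1} = 0$ or $P_0, P_1$ take degenerate values) need to be checked separately to confirm that the coefficient formulas hold uniformly across all index matrices.
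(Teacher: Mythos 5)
Your approach coincides with the paper's: the proof there consists entirely of reading off the coefficients of $t^{-\epsilon}$, $t^{1-\epsilon}$ and $t^{2-\epsilon}$ in the numerator polynomial of Theorem~\ref{th:main3}, exactly as in your second paragraph (the verification that the displayed forms are weak Jacobi forms of the correct weight and index and are linearly independent is left implicit in the paper, so your first and third paragraphs are a welcome supplement rather than a deviation). One correction: $P_a(t)$ is not $t^{-a}(1+t^2+\cdots+t^a)$ and is not supported on a single parity class. Since $\frac{1-t^{1-a}}{1-t^{-1}} = 1 + t^{-1} + \cdots + t^{2-a}$, one has $P_a(t) = t^{-a} + t^{2-a} + t^{3-a} + \cdots + t^{-1} + 1$ for $a \ge 2$; the only missing exponent in $[-a,0]$ is $1-a$, consistent with $P_a(1)$ having to equal the module rank $a$. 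Your conclusions survive because the three facts actually used --- lowest term $t^{-a}$ with coefficient $1$, no term $t^{1-a}$, and a term $t^{2-a}$ precisely when $a \ge 2$ --- still hold; but your parity-based justification that $[t^{1-\epsilon}](P_aP_bP_c)=0$ rests on a false premise and should be replaced by the observation that each factor $P_a$ has a gap at exactly one step above its lowest exponent (note that the parity argument would also wrongly predict $[t^{3-\epsilon}](P_aP_bP_c)=0$).
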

\begin{proof} From the expression of Theorem \ref{th:main3} we see that the Laurent polynomial $P(t)$ with $$\sum_{k = -\infty}^{\infty} \mathrm{dim}\, J_{k, M}^w t^k = \frac{P(t)}{(1 - t^4)(1 - t^6)}$$ has the form \[P(t) = t^{-(a+b+c)} + t^{1-(a+b+c)} \cdot \delta_{abc \ne 0} + t^{2 - (a + b + c)} \cdot (\delta_{a \ge 2} + \delta_{b \ge 2} + \delta_{c \ge 2}) + O(t^{3 - (a + b + c)}). \qedhere \]
\end{proof}

\bigskip

\noindent
\textbf{Acknowledgements} 
H. Wang would like to thank Kaiwen Sun for valuable discussions, and he is grateful to Max Planck Institute for Mathematics in Bonn for its hospitality and financial support.

\bibliographystyle{plainnat}
\bibliofont
\bibliography{\jobname}

\end{document}